\definecolor{vegasgold}{rgb}{0.77, 0.7, 0.35}
\definecolor{darkgoldenrod}{rgb}{0.72, 0.53, 0.04}
\definecolor{gold(metallic)}{rgb}{0.83, 0.69, 0.22}
\newtheorem{lthm}{Theorem}
\DeclareFontFamily{U}{wncy}{}
\DeclareFontShape{U}{wncy}{m}{n}{<->wncyr10}{}
\DeclareSymbolFont{mcy}{U}{wncy}{m}{n}
\DeclareMathSymbol{\Sh}{\mathord}{mcy}{"58}
\newtheorem{theorem}{Theorem}[section]
\newtheorem{lemma}[theorem]{Lemma}
\newtheorem{ass}[theorem]{Assumption}
\newtheorem*{theorem*}{Theorem}
\newtheorem*{ass*}{Assumption}
\newtheorem{definition}[theorem]{Definition}
\newtheorem{corollary}[theorem]{Corollary}
\newtheorem{remark}[theorem]{Remark}
\newtheorem{example}[theorem]{Example}
\newtheorem{conjecture}[theorem]{Conjecture}
\newtheorem{proposition}[theorem]{Proposition}
\newtheorem{question}[theorem]{Question}
\newcommand{\ord}{\mathrm{ord}}
\newcommand{\Z}{\mathbb{Z}}
\newcommand{\Q}{\mathbb{Q}}
\newcommand{\F}{\mathbb{F}}
\newcommand{\cO}{\mathcal{O}}
\newcommand{\Sel}{\mathrm{Sel}}
\newcommand{\cyc}{\mathrm{cyc}}
\newcommand{\op}[1]{\operatorname{#1}}
 \DeclareMathSymbol{\sha}{\mathord}{mcy}{"58}
\newcommand{\mylabel}[2]{#2\def\@currentlabel{#2}\label{#1}}
\numberwithin{equation}{section}
\begin{document}

\title[Hilbert's tenth problem for $\mathbb{Z}_p$-extensions]{Hilbert's tenth problem for families of $ \mathbb{Z}_p $-extensions of imaginary quadratic fields}
\author[K.~M\"uller]{Katharina M\"uller}
\address[Müller]{Institut für Theoretische Informatik, Mathematik und Operations Research, Universität der Bundeswehr München, Werner-Heisenberg-Weg 39, 85577 Neubiberg, Germany}
\email{katharina.mueller@unibw.de}

\author[A.~Ray]{Anwesh Ray}
\address[Ray]{Chennai Mathematical Institute, H1, SIPCOT IT Park, Kelambakkam, Siruseri, Tamil Nadu 603103, India}
\email{anwesh@cmi.ac.in}

\keywords{Iwasawa theory of elliptic curves, Hilbert's tenth problem, diophantine stability}
\subjclass[2020]{11R23, 11G05, 11U05}

\maketitle

\begin{abstract}
Via a novel application of Iwasawa theory, we study Hilbert's tenth problem for number fields occurring in $\Z_p$-towers of imaginary quadratic fields $K$. For a odd prime $p$, the lines $(a,b) \in \mathbb{P}^1(\mathbb{Z}_p)$ are identified with $\mathbb{Z}_p$-extensions $ K_{a,b}/K $. Under certain conditions on $ K $ that involve explicit elliptic curves, we identify a line $(a_0,b_0) \in \mathbb{P}^1(\Z/p\Z)$ such that for all $(a,b) \in \mathbb{P}^1(\mathbb{Z}_p)$ with $(a, b)\not\equiv (a_0, b_0)\pmod{p}$, Hilbert's tenth problem has a negative answer in all finite layers of $ K_{a,b} $. Using results of Kriz--Li and Bhargava et al., we demonstrate that for primes $ p = 3, 11, 13, 31, 37 $, a positive proportion of imaginary quadratic fields meet our criteria.
\end{abstract}

\section{Introduction}
\subsection{Context and motivation} Hilbert's tenth problem inquires whether there exists a universal algorithm to determine the solvability of any given Diophantine equation over the integers $\mathbb{Z}$. This problem was resolved negatively by Matijasevich \cite{matiyasevich}, who demonstrated that no such algorithm exists. Extending this inquiry to number fields, we can ask whether a procedure can be devised to decide the solvability of Diophantine equations over the ring of integers $\cO_K$ of a number field $K$.

It is established that if the ring of rational integers $\mathbb{Z}$ can be described as a Diophantine set within $\cO_K$, then the analogue of Hilbert's tenth problem for $\cO_K$ also has a negative solution. The Denef-Lipshitz conjecture posits that $\mathbb{Z}$ is indeed a Diophantine subset of $\cO_K$ for any number field $K$. This conjecture has been validated in several specific instances:
\begin{itemize}
    \item When $K$ is totally real \cite{denef1980diophantine}.
    \item When $K$ is a quadratic extension of a totally real number field \cite{denef1978diophantine}.
    \item When $K$ has exactly one complex place \cite{shlapentokh1989extension}, \cite{pheidas1988hilbert}.
    \item When $K/\mathbb{Q}$ is abelian \cite{shapiro1989diophantine}.
\end{itemize}
It is natural to seek to prove the conjecture for families of number fields for which the above results do not apply. Mazur and Rubin \cite{mazur2018diophantine} introduced the notion of \emph{diophantine stability} for a variety defined over a number field. By studying this notion for abelian varieties, they were able to prove that Hilbert's tenth problem has a negative answer for certain large families of number fields. Garcia-Fritz and Pasten \cite{Gracia-Fitzen-Pasten} confirmed the Denef--Lipshitz conjecture for fields of the form $\mathbb{Q}(p^{1/3}, \sqrt{-q})$, with $p$ and $q$ being primes from specific sets characterized by positive Tchebotarev density. Recently, Kundu, Lei, and Sprung \cite{kundu-lei-sprung} extended these results, proving the conjecture for a broader family of number fields $\mathbb{Q}(p^{1/3}, \sqrt{Dq})$, where $D$ ranges over a finite set of integers, and $p$ and $q$ are primes within explicit sets with positive Tchebotarev density.

The number fields examined in these studies are all of degree less than 6. This raises the question of whether the conjecture can be proven for families of number fields with unbounded degrees. Partial progress has been made by the second named author \cite{ray-hilbert10}. Consider a number field $K$ and its cyclotomic $\mathbb{Z}_p$-extension $K_{\op{cyc}}$, with $K_n$ as the intermediate extension of degree $p^n$, where $p$ is an odd prime. If an elliptic curve $E/K$ satisfies certain local conditions at $p$, then $\mathbb{Z}$ is a Diophantine subset of $\cO_{K_n}$ for all $n$. Starting from an imaginary quadratic field $K$ and examining the anticyclotomic $\mathbb{Z}_p$-extension, similar results were obtained by the second named author and Weston for the anticyclotomic $\mathbb{Z}_p$-extension of $K$ \cite{ray-weston}. This utilized the theory of congruences between Galois representations, and the techniques of Greenberg and Vatsal \cite{greenbergvatsal}, who study the effect of such congruences in Iwasawa theory. This leads to explicit examples, for instance, it is shown that in all finite layers of the anticyclotomic $\Z_3$-extension of $\Q(\sqrt{-5})$, Hilbert's tenth problem has a negative answer.
\subsection{Main results}
\par The goal of the present paper is to start with an elliptic curve over an imaginary quadratic field $K$. Since $K$ is abelian, Hilbert's tenth problem has a negative solution in all number fields contained in the cyclotomic $\Z_p$-extension of $K$. Note that in order to obtain interesting new results, we wish to consider a family of $\Z_p$-extensions of $K$, other than the cyclotomic one. 
Before presenting our main results, we introduce some additional notation. Let $ E_{/\mathbb{Q}} $ be an elliptic curve and $ K_\infty $ the $\mathbb{Z}_p^2$-extension of $ K $. Define $ \Gamma_\infty := \mathrm{Gal}(K_\infty/K) $ and $ \Gamma_{\mathrm{cyc}} := \mathrm{Gal}(K_{\mathrm{cyc}}/K) $. Let $ \sigma, \tau \in \Gamma_\infty $ be topological generators with:
\[
\begin{split}
    &\overline{\langle \sigma \rangle} = \ker\left(\Gamma_\infty \rightarrow \Gamma_{\mathrm{cyc}}\right),\\
    & \overline{\langle \tau \rangle} = \ker\left(\Gamma_\infty \rightarrow \Gamma_{\mathrm{ac}}\right).
\end{split}
\]
Set $ X+1 = \sigma $ and $ Y+1 = \tau $, allowing us to identify $ \Lambda_\infty $ with the power series ring $ \mathbb{Z}_p\llbracket X,Y\rrbracket $. Consider $ \mathbb{P}^1(\mathbb{Z}_p) $, the space of lines in $ \mathbb{Z}_p^2 $. Each line has a representative $ (a,b) \in \mathbb{Z}_p^2 $ with $ p \nmid a $ or $ p \nmid b $, uniquely determined up to a unit in $ \mathbb{Z}_p $. For $ (a,b) \in \mathbb{P}^1(\mathbb{Z}_p) $, there is a unique $\mathbb{Z}_p$-extension $ K_{a,b}/K $ such that:
\[
\overline{\langle \sigma^a \tau^b \rangle} = \ker\left(\Gamma_\infty \rightarrow \mathrm{Gal}(K_{a,b}/K)\right).
\]
For $n\in \Z_{\geq 0}$, let $K_{a,b}^{(n)}/K$ be the unique extension contained in $K_{a,b}$ such that $[K_{a,b}^{(n)}:K]=p^n$. Denote by $E^{(K)}$ the twist of $E$ over $K$. The $p$-primary Selmer group of $E$ over $\Q$ is denoted $\op{Sel}_{p^\infty}(E/\Q)$. Given a prime $\ell$, set $c_\ell(E/\Q)$ to be the Tamagawa number of $E_{/\Q_\ell}$. Assuming that $E$ has good reduction at $p$, take $\widetilde{E}(\F_p)$ to be the group of $\F_p$-points of the reduction of $E$ at $p$. 
\begin{lthm}[Theorems \ref{main thm 2} and \ref{main thm 3}]
    Let $E_{/\Q}$ be an elliptic curve, $p$ an odd prime and $K$ be an imaginary quadratic field. Assume that the following conditions hold
    \begin{enumerate}
        \item $\op{corank}_{\Z_p} \op{Sel}_{p^\infty}(E/\Q)=1$ and $\op{corank}_{\Z_p} \op{Sel}_{p^\infty}(E^{(K)}/\Q)=0$;
        \item $E$ and $E^{(K)}$ have good ordinary reduction at $p$; 
        \item $\widetilde{E}(\F_p)[p]=0$ and $\widetilde{E}^{(K)}(\F_p)[p]=0$;
        \item $p\nmid \prod_{\ell\neq p} c_\ell(E/\Q)\times \prod_{\ell\neq p} c_\ell(E^{(K)}/\Q)$;
        \item the normalized $p$-adic regulator of $E$ over $\Q$ is a $p$-adic unit. 
    \end{enumerate}
    There is a unique line $(a_0, b_0)\in \mathbb{P}^1(\F_p)$ such that for all $(a,b)\not \equiv (a_0, b_0)\pmod{p}$ and for all $n\in \Z_{\geq 0}$, Hilbert's tenth problem has a negative answer for $K_{a,b}^{(n)}$. Moreover, one can take $(a_0, b_0)\neq (1, 0)$. 
\end{lthm}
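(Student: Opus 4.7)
The plan is to combine Mazur--Rubin's criterion for diophantine stability with an Iwasawa-theoretic rank-control argument for $E$ over the $\Z_p^2$-extension $K_\infty/K$. Since $K$ is imaginary quadratic, $\Z$ is already a diophantine subset of $\cO_K$ by the Shlapentokh--Pheidas result cited in the introduction; hence it suffices to produce an elliptic curve $A/K$ of positive rank whose $K$-rank is preserved in every layer $K_{a,b}^{(n)}$, since Mazur--Rubin then give diophantine stability of $K$ in $K_{a,b}^{(n)}$ and HTP has a negative answer in each layer. I would take $A=E$ itself. The decomposition of $\op{Sel}_{p^\infty}(E/K)$ under $\op{Gal}(K/\Q)$ into its $\pm 1$-eigenspaces, identified with $\op{Sel}_{p^\infty}(E/\Q)$ and $\op{Sel}_{p^\infty}(E^{(K)}/\Q)$ respectively, combined with assumption (1) yields $\op{corank}_{\Z_p}\op{Sel}_{p^\infty}(E/K)=1$. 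Assumption (5) together with Kato's divisibility then forces $\op{rank}(E(K))=1$ with finite $p$-primary Tate--Shafarevich group, so $E/K$ meets the positive-rank requirement.

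The heart of the argument is the structure of $\X := \op{Sel}_{p^\infty}(E/K_\infty)^{\vee}$ as a $\Lambda_\infty$-module, where $\Lambda_\infty\cong\Z_p\lb X,Y\rb$. Hypotheses (2)--(4) ensure clean Mazur-style control theorems at every intermediate layer: $p\nmid\prod_\ell c_\ell$ kills the local Tamagawa obstructions, while $\widetilde E(\F_p)[p]=0$ and $\widetilde{E^{(K)}}(\F_p)[p]=0$ give triviality of the kernel and cokernel of the relevant restriction maps. Combined with the two-variable main conjecture for $E/K$ (available via Skinner--Urban and Wan) and the $\op{corank}=1$ input above, one deduces that $\X$ has $\Lambda_\infty$-rank exactly $1$; its torsion quotient has characteristic ideal $(F)$ with nonzero reduction $\bar F\in\F_p\lb X,Y\rb$. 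For each line $(a,b)\in\mathbb{P}^1(\Z_p)$, the quotient $\X/((1+X)^a(1+Y)^b-1)\X$ controls $\op{Sel}_{p^\infty}(E/K_{a,b})^\vee$, and a standard Iwasawa argument shows $\op{rank}(E(K_{a,b}^{(n)}))=1$ for all $n$ provided this specialized module has $\Lambda_{K_{a,b}}$-rank $1$ with vanishing $\mu$-invariant. Both conditions can fail precisely when the initial linear form $aX+bY$ divides the initial form of $\bar F$; a Weierstrass-preparation argument then confines this to a single tangent direction, giving the unique exceptional line $(a_0,b_0)\in\mathbb{P}^1(\F_p)$.

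To show $(a_0,b_0)\neq(1,0)$, observe that the cyclotomic direction $(1,0)$ corresponds to specialization by $X$, and the residual module $\X/X\X$ recovers the cyclotomic dual Selmer group $\op{Sel}_{p^\infty}(E/K_\cyc)^\vee$. The cyclotomic main conjecture for $E/\Q$, combined with assumption (5), which forces the $p$-adic $L$-function to vanish to exact order $1$ at the trivial character with unit leading term, implies that the cyclotomic specialization of the torsion quotient of $\X$ has $\lambda$-invariant $0$, so $X$ cannot divide the initial form of $\bar F$. Applying Mazur--Rubin's diophantine stability to every non-exceptional line then completes the proof. The main technical obstacle will be establishing the precise rank-$1$ statement for $\X$ and identifying the leading term of $\bar F$; this requires carefully packaging the two-variable main conjecture for $E/K$ and the factorization of its two-variable $p$-adic $L$-function in terms of the cyclotomic $p$-adic $L$-functions of $E/\Q$ and $E^{(K)}/\Q$.
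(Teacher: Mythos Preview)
Your overall shape is right—use the Euler--characteristic/regulator hypotheses to pin down the cyclotomic Selmer group, lift to the two--variable module $\X=\op{Sel}_{p^\infty}(E/K_\infty)^\vee$, then specialize along each line $(a,b)$—but the structural claim you build the argument on is incorrect, and the analytic machinery you invoke is neither needed nor (as stated) available.

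\textbf{The main error.} You assert that $\X$ has $\Lambda_\infty$--rank exactly $1$ and then work with its torsion quotient. In fact, under the hypotheses of the theorem $\X$ is $\Lambda_\infty$--\emph{torsion}. The cyclotomic input ($\mu_p(E/K_{\op{cyc}})=0$, $\lambda_p(E/K_{\op{cyc}})=1$, obtained from (1)--(5) via the Perrin-Riou--Schneider formula and the decomposition $\op{Sel}_{p^\infty}(E/K_{\op{cyc}})\simeq\op{Sel}_{p^\infty}(E/\Q_{\op{cyc}})\oplus\op{Sel}_{p^\infty}(E^{(K)}/\Q_{\op{cyc}})$) forces $\X$ to satisfy a strong structural condition: it is a finite direct sum of \emph{cyclic torsion} $\Lambda_\infty$--modules, say $\X\simeq\bigoplus_i\Lambda_\infty/(g_i)$. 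The ``rank~$1$'' phenomenon you are trying to capture is not a free summand of $\X$; it is the vanishing of $F=\prod_i g_i$ at the origin $(X,Y)=(0,0)$, which reflects $\op{corank}_{\Z_p}\op{Sel}_{p^\infty}(E/K)=1$ through the control theorem. If $\X$ genuinely had $\Lambda_\infty$--rank $1$, the specialization $\X/((1+X)^a(1+Y)^b-1)\X$ would have $\Lambda_{a,b}$--rank $\ge 1$ for \emph{every} $(a,b)$, and the rank of $E$ would be unbounded along every $\Z_p$--extension—contradicting what you are trying to prove.

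\textbf{The unnecessary input.} You appeal to the two--variable main conjecture (Skinner--Urban, Wan) and to factorization of the two--variable $p$--adic $L$--function. The paper uses none of this, and you would have trouble invoking it in the stated generality (no sign hypothesis, no squarefree conductor, etc.). The argument is purely algebraic: once $\X\simeq\bigoplus_i\Lambda_\infty/(g_i)$ and one has the exact control theorem $\op{Sel}_{p^\infty}(E/K_{a,b})^\vee\simeq\bigoplus_i\Lambda_{a,b}/(\pi_{a,b}(g_i))$ (this is where hypothesis~(3), the anomalous--prime condition, enters), the characteristic element along $(a,b)$ is obtained by substituting $X\mapsto g(Y)$ with $f_{a,b}(g(Y),Y)=0$. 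Writing $F(X,Y)=\sum a_{i,j}X^iY^j$, the cyclotomic data give $a_{0,0}=0$ and $p\nmid a_{0,1}$, and the linear coefficient of the specialized series is $-a_{1,0}b/a+a_{0,1}$ (or $a_{1,0}$ for the anticyclotomic line). Exactly one residue class in $\mathbb{P}^1(\F_p)$ can make this vanish mod~$p$, and since $p\nmid a_{0,1}$ the cyclotomic line $(1,0)$ is never excluded. Your tangent--cone heuristic is morally this computation, but you should carry it out on $F$ itself, not on the characteristic ideal of a nonexistent torsion quotient.

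Finally, the rank--stability criterion you want is Shlapentokh's theorem, not Mazur--Rubin; and the passage from $\op{corank}_{\Z_p}\op{Sel}_{p^\infty}(E/K_{a,b}^{(n)})\le 1$ to $\op{rank}\,E(K_{a,b}^{(n)})=1$ uses only the injectivity of $\op{Sel}_{p^\infty}(E/K_{a,b}^{(n)})\hookrightarrow\op{Sel}_{p^\infty}(E/K_{a,b})$, which follows from $E(K)[p]=0$ (a consequence of~(3)).
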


In many of the cases we consider, the rank of the elliptic curve in question is unbounded in the anticyclotomic $\Z_p$-extension, due to the presence of Heegner points. Thus, we are not in general able to obtain a result that applies to all $\Z_p$-extensions of $K$. Given a prime $p$, we show that there is a large family of imaginary quadratic fields for which there is an elliptic curve $E_{/\Q}$ which satisfies the above conditions. It conveniences us to introduce a shorthand for the assertions made in the above mentioned results. 
\begin{definition}
    We say that the pair $(K, p)$ satisfies $(\mylabel{hyp:h10gen}{\textbf{H10-gen}})$ if there exists a line $(a_0,b_0)\in \mathbb{P}^1(\mathbb{F}_p)$ such that $K_{a,b}^{(n)}/\Q$ is integrally diophantine for all $(a,b)$ that are not congruent to $(a_0,b_0)$ modulo $p$. Moreover, $(a_0, b_0)$ can be taken to not be equal to $(1, 0)$. 
\end{definition}
The criterion stands for "Hilbert's tenth problem in generic $\Z_p$-extensions". Endow $\mathbb{P}^1(\Z_p)$ with the natural measure induced from $p$-adic valuation on $\Z_p$. The condition implies that Hilbert's tenth problem is negative for all finite layers in $K_{(a,b)}$ for a density of at least $(1-(p+1)^{-1})$ of all $(a,b)\in \mathbb{P}^1(\Z_p)$). In this section, we fix an odd prime number $p$ and write $K=\Q(\sqrt{d})$, where $d$ is a squarefree negative integer. Set $\op{ht}(K):=|d|$, and consider the following natural question. 
\begin{question}
    Fix an odd prime $p$. When ordered by their height, is there a positive density of imaginary quadratic fields $K$ for which the criterion \eqref{hyp:h10gen} is satisfied?
\end{question}
If the answer to the above question is yes, it shows that there are indeed large families of imaginary quadratic fields for which our results hold, thus giving rise to many new families of number fields for which Hilbert's tenth problem has a negative answer.

\begin{lthm}[Theorems \ref{lastthm1} and \ref{lastthm 2}]\label{lastthm}
    For the prime $p=3, 11, 13, 31, 37$, there is a positive density of imaginary quadratic fields for which \eqref{hyp:h10gen} holds. 
\end{lthm}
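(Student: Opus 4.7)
The plan is, for each prime $p \in \{3, 11, 13, 31, 37\}$, to exhibit an explicit witness elliptic curve $E_{/\Q}$ such that a positive density of squarefree $d<0$ yields imaginary quadratic fields $K=\Q(\sqrt{d})$ for which the triple $(E, K, p)$ satisfies hypotheses (1)--(5) of the preceding theorem. Since ordering imaginary quadratic fields by height $|d|$ coincides with ordering squarefree negative integers by absolute value, this produces the required positive density, and the preceding theorem then delivers $\eqref{hyp:h10gen}$ for each such pair $(K, p)$.

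For each listed $p$, I would first locate $E/\Q$ of Mordell--Weil and $p$-Selmer corank exactly one, with good ordinary reduction at $p$ satisfying $a_p(E)\not\equiv \pm 1\pmod p$, with Tamagawa numbers coprime to $p$ at every bad prime, and with normalized $p$-adic regulator a $p$-adic unit. Such curves can be found via LMFDB data or the specific families studied by Kriz--Li; this secures (2)--(5) for $E/\Q$ itself and supplies the $E$-half of (1). Next I would invoke Kriz--Li's theorem towards Goldfeld's conjecture: for the chosen $E$, a positive proportion of negative fundamental discriminants $d$ satisfy $L(E^{(d)},1)\neq 0$, and combining with $p$-converse results of Skinner--Urban type upgrades this to $\op{corank}_{\Z_p}\op{Sel}_{p^\infty}(E^{(d)}/\Q)=0$ on a positive density of $d<0$, furnishing the remaining half of (1).

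It then remains to check that (2)--(4) propagate to the twist in positive density. Provided $p\nmid d$, the twist $E^{(d)}$ has good ordinary reduction at $p$ with $a_p(E^{(d)})=\chi_d(p)\cdot a_p(E)$, so the choice $a_p(E)\not\equiv \pm 1\pmod p$ forces $\widetilde{E^{(d)}}(\F_p)[p]=0$ regardless of how $p$ behaves in $K$. At primes $\ell$ of good reduction for $E$ that divide $d$, the twist acquires additive reduction with Tamagawa number dividing $4$, hence coprime to $p$ once $p\geq 5$; for $p=3$ one excludes a finite list of offending $\ell$, still preserving positive density. The Tamagawa conditions at the finitely many primes of bad reduction for $E$ are controlled by the quadratic-twist density results of Bhargava--Klagsbrun--Lemke Oliver--Shnidman. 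Intersecting the three positive density sets of $d$ concludes the argument.

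The hard part will be locating witness curves meeting (2)--(5) simultaneously while remaining within the reach of Kriz--Li's hypotheses, which in their work impose concrete Heegner-hypothesis--type conditions on $E$. The regulator unit condition (5) is the most delicate, since it can fail for apparently generic curves and must be verified by explicit $p$-adic computation. The restriction to $p\in \{3,11,13,31,37\}$ reflects precisely the set of primes for which concrete curves satisfying both the regulator unit condition and Kriz--Li compatibility are already available in the literature; extending the list would require new constructions of such witness curves rather than a change in the Iwasawa-theoretic mechanism.
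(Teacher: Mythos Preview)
Your overall strategy matches the paper's: fix a witness curve $E$ for each $p$, invoke arithmetic-statistical input to produce a positive-density set of negative $d$ with $\op{corank}_{\Z_p}\op{Sel}_{p^\infty}(E^{(d)}/\Q)=0$, and check that the local conditions (2)--(4) of Theorem~\ref{main thm 3} persist under twisting. The paper, however, handles the two regimes differently. For $p\in\{11,13,31,37\}$ it uses Kriz--Li's Theorem~4.3, which requires an auxiliary imaginary quadratic field $K_0$ and a $2$-adic Heegner logarithm condition; explicit triples $(E,K_0,p)$ are tabulated. The twists are by $d\cdot d_{K_0}$ with $d>0$ supported on a Chebotarev set $S$ of primes that are squares modulo every prime dividing $pN$. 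This forces $p$ and all bad primes of $E$ to split in $K=\Q(\sqrt{dd_{K_0}})$, so $E$ and $E^{(K)}$ are isomorphic over $\Q_p$ (only $a_p\not\equiv 1$ is needed, not $a_p\not\equiv\pm1$) and the Tamagawa numbers at bad primes of $E$ are literally unchanged. For $p=3$ the paper uses BKOS directly on the curve 1216.o3 (which has a rational $3$-isogeny) to obtain $\op{Sel}_3(E^{(d)}/\Q)=0$ on a positive-density set, then restricts to $d<0$, $d\equiv 1\pmod 3$.

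Two points in your sketch are misstated. First, to pass from $L(E^{(d)},1)\neq 0$ to Selmer corank~$0$ you want the classical direction of Gross--Zagier--Kolyvagin (analytic rank~$0$ $\Rightarrow$ rank~$0$ and $\Sh$ finite); ``$p$-converse results of Skinner--Urban type'' run the other way. Second, BKOS is about Selmer ranks, not Tamagawa numbers, so it does not ``control'' the Tamagawa product at bad primes of $E$; the paper handles those primes by arranging them to split in $K$. Your remark that for $p=3$ one must ``exclude a finite list of offending~$\ell$'' is also off: a prime $\ell\mid d$ of good reduction for $E$ gives $E^{(d)}$ type $I_0^\ast$ with $c_\ell\in\{1,2,4\}$, already coprime to~$3$, so no exclusion is needed. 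Finally, your ``intersect three positive-density sets'' step is not automatic, since the Kriz--Li output is itself governed by local constraints; the paper avoids this by baking all the needed congruences into the definition of $S$ (respectively $T_0'(\phi)$) from the outset rather than intersecting after the fact.
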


\subsection{Organization and methodology} Including the introduction, the article consists of five sections. Section \ref{s 2} is dedicated to establishing preliminary notions and setting up relevant notation. In this section, we discuss the algebraic structure of Selmer groups over infinite towers of number fields. Moreover, we discuss basic notions related to Hilbert's tenth problem and the notion of an integrally diophantine extension. A criterion of Shlapentokh (Theorem \ref{shlap 1}) relates the stability of ranks of elliptic curves to Hilbert's tenth problem. Given an extension of number fields $L/K$, if there is an elliptic curve $E_{/K}$ such that $\op{rank}E(L)=\op{rank}E(K)>0$, then, $L/K$ is an integrally diophantine extension. This is the main tool that we shall use in establishing our results in subsequent sections. Iwasawa theory is used to give explicit conditions for the rank of an elliptic curve to remain constant in a $\Z_p$-extension. In section \ref{s 3}, we discuss the role of the Euler characteristic in Iwasawa theory and its relevance to the study of ranks of elliptic curves. We discuss an explicit formula due to Perrin-Riou and Schneider which is used to calculate Iwasawa invariants of Selmer groups. In section \ref{s 4}, we prove the main results of this article, namely Theorems \ref{main thm 1}, \ref{main thm 2} and \ref{main thm 3}. Given an imaginary quadratic field $K$ and an odd prime $p$, we give explicit conditions for there to exist an elliptic curve $E_{/\Q}$ with positive rank, such that the rank remains constant in many $\Z_p$-extensions $K_{a,b}/K$. This construction relies on understanding the structure of the Selmer group over the $\Z_p^2$-extension of $K$ and relating it to the that over $K_{a,b}$. Explicit examples are provided at the end of this section, cf. Examples \ref{example 4.15} and \ref{example 4.16}. In section \ref{s 5}, we prove that the conditions of Theorems \ref{main thm 1}, \ref{main thm 2} and \ref{main thm 3} are satisfied for a large family of imaginary quadratic fields. The strategy used here is motivated (in part) by the method of Gajek-Leonard et al \cite{gajek2024conjecture}, who show prove Mazur's growth number conjecture for the $\Z_p^2$-extension of an imaginary quadratic fields, provided a number of additional conditions are satisfied. This gives a proof of Theorem \ref{lastthm}, which is obtained by leveraging results of Kriz--Li \cite{kriz-li} and Bhargava et. al. \cite{BKOS}. Furthermore, we are able to compute explicit densities for the imaginary quadratic fields for which our results hold.

\subsection{Outlook} Our results illustrate that Iwasawa theory can be employed as a novel tool to establish new cases of the Denef-Lipschitz conjecture. These techniques when combined with known results in arithmetic statistics give large families of number fields for which Hilbert's tenth problem has a negative answer. The Iwasawa theory of elliptic curves was introduced by Mazur \cite{mazurgrowth}, whose original motivation was to study the growth of ranks of elliptic curves in $\Z_p$-towers. We consider a broader application of Iwasawa theory which is in keeping with its traditional scope. One may seek to explore more generalized contexts, specifically those involving $p$-adic Lie extensions of dimension $d > 1$. Harris \cite{harris2000correction} provided general upper bounds for the asymptotic growth of the rank of the Mordell-Weil group of an abelian variety in a tower of number fields contained within a $p$-adic Lie extension. These upper bounds are crucial as they give a framework for understanding how the rank behaves as we move up through the layers of the extension. Despite this progress, the precise order of growth in Mordell-Weil ranks remains an open question and is not fully understood, as highlighted in \cite[Question following Proposition 4.7]{hung2021growth}.

\section{Preliminaries}\label{s 2}
\par In this section, we examine foundational concepts. In subsection \ref{s 2.1}, we introduce Selmer groups associated with elliptic curves and their connections to the Mordell--Weil group and the Tate-Shafarevich group. These Selmer groups will be analyzed over different $\Z_p$-extensions of an imaginary quadratic field $K$. In subsection \ref{s 2.2}, we address Hilbert's tenth problem and the relative notion of integrally diophantine extensions of number fields.

\subsection{Selmer groups and their Iwasawa theory}\label{s 2.1}
Throughout $d$ will be a squarefree negative integer, $K := \mathbb{Q}(\sqrt{d})$ and let $ p $ be an odd prime number. Set $ K_\infty/K$ to denote the $\mathbb{Z}_p^2$-extension of $ K $. The \textit{cyclotomic $\mathbb{Z}_p$-extension} $ K_{\mathrm{cyc}}/K $ is the unique $\mathbb{Z}_p$-extension of $K$ contained within $ K(\mu_{p^\infty}) $. On the other hand, the \textit{anticyclotomic $\mathbb{Z}_p$-extension} $ K_{\mathrm{ac}}/K $ is a Galois extension over $\mathbb{Q}$, where the induced action of $\mathrm{Gal}(K/\mathbb{Q})$ on $\mathrm{Gal}(K_{\mathrm{ac}}/K)$ is nontrivial. We note that $ K_{\mathrm{cyc}} $ and $ K_{\mathrm{ac}} $ are both subfields of $ K_\infty $ and that $ K_{\infty}=K_{\mathrm{cyc}} \cdot K_{\mathrm{ac}} $. Set $\Gamma_\infty := \mathrm{Gal}(K_\infty/K)$, $\Gamma_{\mathrm{cyc}} := \mathrm{Gal}(K_{\mathrm{cyc}}/K)$, and $\Gamma_{\mathrm{ac}} := \mathrm{Gal}(K_{\mathrm{ac}}/K)$.

\par Given a pro-$p$ group $\mathcal{G}$, the \textit{Iwasawa algebra} is defined as the inverse limit
\[
\Lambda(\mathcal{G}) = \varprojlim_U \mathbb{Z}_p\left[\mathcal{G}/U\right],
\]
where $U$ runs over all finite index normal subgroups of $\mathcal{G}$. The Iwasawa algebras associated with $\Gamma_\infty$, $\Gamma_{\mathrm{cyc}}$, and $\Gamma_{\mathrm{ac}}$ are denoted by $\Lambda_\infty$, $\Lambda_{\mathrm{cyc}}$, and $\Lambda_{\mathrm{ac}}$, respectively. Note that $\Gamma_{\op{cyc}}$ and $\Gamma_{\op{ac}}$ are both quotients of $\Gamma_\infty$, and that the natural quotient maps induce surjections at the level of Iwasawa algebras:
\[\pi_{\op{cyc}}: \Lambda_\infty\rightarrow \Lambda_{\op{cyc}}\text{ and }\pi_{\op{ac}}: \Lambda_\infty\rightarrow \Lambda_{\op{ac}}.\]
\par The Iwasawa theory of elliptic curves concerns itself with the structure of Selmer groups, viewed over infinite Galois extensions. These Selmer groups naturally give rise to modules over Iwasawa algebras, and it is the structure theory of such modules that give rise to Iwasawa theoretic arithmetic invariants. The elliptic curves $E_{/\Q}$ we shall focus on in this article will be defined over $\Q$ and will have good ordinary reduction at $p$.
\par Let $E_{/\mathbb{Q}} $ be an elliptic curve and denote by $ E^{(K)} $ the quadratic twist of $ E $ by $ K $. Assume that both $E$ and $E^{(K)}$ have good ordinary reduction at $p$. Let $ N_E $ denote the conductor of $ E $ and let $\Sigma$ be a finite set of rational primes containing the primes that divide $ N_E p $. Denote by $\Q_\Sigma$ the maximal algebraic extension of $\Q$ in which the finite primes outside $\Sigma$ are unramified. We introduce the $p$-primary Selmer group of $E$ over an algebraic extension of $\Q$. For any number field $ F $, let $\Sigma(F)$ be the set of primes $ v $ of $ F $ that lie above $\Sigma$. Assume that $F$ is contained in $\Q_\Sigma$, i.e., $\Sigma$ contains the primes that ramify in $F$. For $\ell \in \Sigma$, set
\[
J_\ell(E/F) := \bigoplus_{v|\ell} H^1(F_v, E)[p^\infty].
\]
Let $ \mathcal{F}/F$ be an infinite pro-$p$ extension of $F$, denote by \[J_\ell(E/\mathcal{F}):=\varinjlim_{F'} J_\ell(E/F')\]where $ F'/F$ ranges over number field extensions of $F$ contained in $\mathcal{F}$. The $ p $-primary Selmer group of $ E $ over $ \mathcal{F} $ is then defined as follows:
\[
\mathrm{Sel}_{p^\infty} (E/\mathcal{F}) := \ker\left\{H^1(\Q_\Sigma/\mathcal{F}, E[p^\infty]) \longrightarrow \bigoplus_{\ell \in \Sigma} J_\ell(E/\mathcal{F})\right\}.
\]
We take note of a useful statement about Selmer groups and change of base.

\begin{proposition}\label{prop:selmer-decomposition-ord}
    Suppose $ E/\mathbb{Q} $ is an elliptic curve, and let $ E^{(K)} $ be the twist of $ E $ corresponding to $ K $. Then,
    \[
    \mathrm{Sel}(E/K_{\mathrm{cyc}}) \simeq \mathrm{Sel}(E/\mathbb{Q}_{\mathrm{cyc}}) \oplus \mathrm{Sel}(E^{(K)}/\mathbb{Q}_{\mathrm{cyc}}).
    \]
\end{proposition}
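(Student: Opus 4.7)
The heart of the matter is that $K_{\mathrm{cyc}}/\mathbb{Q}_{\mathrm{cyc}}$ is a quadratic extension whose Galois group $\Delta := \mathrm{Gal}(K/\mathbb{Q}) = \{1,\tau\}$ has order $2$, coprime to the odd prime $p$. Thus $\mathbb{Z}_p[\Delta]$ splits under the idempotents $e_{\pm} = (1\pm\tau)/2$, and any discrete $p$-primary $\mathbb{Z}_p[\Delta]$-module $M$ decomposes canonically as $M = M^{+}\oplus M^{-}$. My plan is to apply this to $\mathrm{Sel}_{p^\infty}(E/K_{\mathrm{cyc}})$, which inherits a natural $\Delta$-action, and to identify the two eigenspaces with the $p$-primary Selmer groups of $E$ and of $E^{(K)}$ over $\mathbb{Q}_{\mathrm{cyc}}$.

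To set this up, I would fix a finite set $\Sigma$ of rational primes containing $p$ together with the primes of bad reduction for both $E$ and $E^{(K)}$, and observe that $E[p^\infty]$ and $E^{(K)}[p^\infty]$ agree after restriction to $G_{K_{\mathrm{cyc}}}$, because the twisting character $\chi$ of $K/\mathbb{Q}$ is trivial on $G_K$. Applying the Hochschild--Serre spectral sequence to the cover $K_{\mathrm{cyc}}/\mathbb{Q}_{\mathrm{cyc}}$, and using that $H^{i}(\Delta,-)$ vanishes for $i\geq 1$ on $p$-primary modules, one obtains canonical identifications
\[
H^{1}(\mathbb{Q}_\Sigma/K_{\mathrm{cyc}},E[p^\infty])^{+} = H^{1}(\mathbb{Q}_\Sigma/\mathbb{Q}_{\mathrm{cyc}},E[p^\infty]), \qquad H^{1}(\mathbb{Q}_\Sigma/K_{\mathrm{cyc}},E[p^\infty])^{-} = H^{1}(\mathbb{Q}_\Sigma/\mathbb{Q}_{\mathrm{cyc}},E^{(K)}[p^\infty]).
\]
Equivalently, this is Shapiro's lemma applied to $\mathrm{Ind}_{G_{K_{\mathrm{cyc}}}}^{G_{\mathbb{Q}_{\mathrm{cyc}}}}E[p^\infty] \cong E[p^\infty]\oplus E^{(K)}[p^\infty]$, an isomorphism which holds because $|\Delta|=2$ is invertible in $\mathbb{Z}_p$.

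Next I would perform the analogous decomposition for each local term $J_\ell(E/K_{\mathrm{cyc}})$ appearing in the definition of the Selmer group, and check that the global-to-local map is $\Delta$-equivariant and so respects the $\pm$-splitting. If $\ell$ splits in $K$, the sum over places of $K_{\mathrm{cyc}}$ above $\ell$ is a permutation $\Delta$-module whose $\pm$-parts identify directly with the corresponding local cohomology sums over $\mathbb{Q}_{\mathrm{cyc}}$ attached to $E$ and to $E^{(K)}$ respectively; if $\ell$ is inert or ramified, the same inflation--restriction argument used globally applies verbatim at each place. Taking $\pm$-eigenspaces of both domain and target and passing to kernels then yields the desired decomposition.

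The main point to be careful with is the identification of the $-$-eigenspace at the \emph{local} level at places that are inert or ramified in $K/\mathbb{Q}$ (in particular at $p$ itself, where the ordinary hypothesis governs the local behaviour). One must verify that the local cohomology $H^{1}(K_{\mathrm{cyc},v},E)[p^\infty]^{-}$ matches the classical local term for $E^{(K)}$ under the sign-twisted identification, not merely as abstract abelian groups. This is formal once one recalls that quadratic twisting is compatible with the Kummer sequence, and that $\chi|_{G_{K_{\mathrm{cyc}},v}}$ is trivial, so the check reduces to the same $|\Delta|$-prime-to-$p$ argument used in the global step.
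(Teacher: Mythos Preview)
Your argument is correct and follows the same route as the paper: both apply Shapiro's lemma (equivalently, the $\Delta$-eigenspace decomposition with $|\Delta|=2$ prime to $p$) to split $H^{1}(\Q_\Sigma/K_{\mathrm{cyc}},E[p^\infty])$ and each $J_\ell(E/K_{\mathrm{cyc}})$ compatibly, then take kernels. The paper's proof is simply a terser version of what you wrote.
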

\begin{proof}
    From Shapiro's lemma we deduce that 
    \[H^1(\Q_\Sigma/K_{\op{cyc}}, E[p^\infty])\simeq H^1(\Q_\Sigma/\Q_{\op{cyc}}, E[p^\infty])\oplus H^1(\Q_\Sigma/\Q_{\op{cyc}}, E^{(K)}[p^\infty]),\] and 
    \[J_\ell(E/K_{\op{cyc}})=J_\ell(E/\Q_{\op{cyc}})\oplus J_\ell(E^{(K)}/\Q_{\op{cyc}}).\]
    These isomorphisms are compatible with the restriction maps, and we thus find that 
    \[
    \mathrm{Sel}(E/K_{\mathrm{cyc}}) \simeq \mathrm{Sel}(E/\mathbb{Q}_{\mathrm{cyc}}) \oplus \mathrm{Sel}(E^{(K)}/\mathbb{Q}_{\mathrm{cyc}}).
    \]
\end{proof}

Let $ \sigma, \tau \in \Gamma_\infty $ be topological generators satisfying
\[
\begin{split}
    &\overline{\langle \sigma \rangle} = \ker\left\{\Gamma_\infty \longrightarrow \Gamma_{\mathrm{cyc}}\right\},\\
    & \overline{\langle \tau \rangle} = \ker\left\{\Gamma_\infty \longrightarrow  \Gamma_{\mathrm{ac}}\right\}.
\end{split}
\]
Denote $ X+1 $ as $ \sigma $ and $ Y+1 $ as $ \tau $. Then we can identify $ \Lambda_\infty$ with the ring of power series $\mathbb{Z}_p\llbracket X,Y\rrbracket $. Let $\mathbb{P}^1(\mathbb{Z}_p)$ be the space of lines in $\mathbb{Z}_p^2$. Any line has a representative $ (a,b) \in \mathbb{Z}_p^2 $ such that $ p \nmid a $ or $ p \nmid b $, which is uniquely determined up to multiplication by a unit in $\Z_p$. Given $ (a,b) \in \mathbb{P}^1(\mathbb{Z}_p) $, there is a unique $\mathbb{Z}_p$-extension $ K_{a,b}/K $ such that
\[
\overline{\langle \sigma^a \tau^b \rangle} = \ker\left(\Gamma_\infty \rightarrow \mathrm{Gal}(K_{a,b}/K)\right).
\]
For $ (a,b) \in \mathbb{P}^1(\mathbb{Z}_p) $, set $\Gamma_{a,b} = \mathrm{Gal}(K_{a,b}/K)$, $ H_{a,b} = \overline{\langle \sigma^a \tau^b \rangle} $, and $\Lambda_{a,b} = \mathbb{Z}_p\llbracket \Gamma_{a,b} \rrbracket $. Define $\pi_{a,b} : \Lambda \rightarrow \Lambda_{a,b}$ to be the map induced by the natural projection $ \Gamma_\infty \rightarrow \Gamma_{a,b} $. Set $ f_{a,b} = (1+X)^a(1+Y)^b - 1 $. For $(a,b) = (1:0)$, use $\Gamma_{\mathrm{cyc}}$, $ H_{\mathrm{cyc}}$, and $\pi_{\mathrm{cyc}}$ instead of $\Gamma_{(1:0)}$, $ H_{(1:0)}$, and $\pi_{(1:0)}$, respectively. Choose a topological generator $\gamma=\gamma_{a,b}$ of $\Gamma_{a,b}$ and set $T=T_{a,b}:=\gamma-1$. Then, we identify $\Lambda_{a,b}$ with the formal power series ring $\Z_p\llbracket T\rrbracket$. In particular, identify $\Lambda_{\rm{cyc}}$ (resp. $\Lambda_{\rm{ac}}$) with $\Z_p\llbracket T\rrbracket$.

\par The Pontryagin dual of a module $M$ over $\Z_p\llbracket T\rrbracket$ is defined by \[M^\vee:=\op{Hom}_{\Z_p}\left(M, \Q_p/\Z_p\right).\] We say that $M$ is \emph{cofinitely generated} (resp. \emph{cotorsion}) over $\Z_p\llbracket T\rrbracket$ if $M^\vee$ is finitely generated (resp. torsion). A monic polynomial $ f(T) \in \mathbb{Z}_p\llbracket T \rrbracket $ is called a \textit{distinguished polynomial} if all its coefficients, except for the leading one, are divisible by $ p $. Let $ M $ and $ M' $ be cofinitely generated and cotorsion $\Lambda$-modules. Then $ M $ and $ M' $ are \textit{pseudo-isomorphic} if there is a $\Lambda$-module map $\phi: M \rightarrow M'$ whose kernel and cokernel are finite.

\par According to the structure theory of $\Z_p\llbracket T\rrbracket$-modules, any cofinitely generated and cotorsion $\Z_p\llbracket T\rrbracket$-module $ M $ is pseudo-isomorphic to a module $ M' $ whose Pontryagin dual is a direct sum of cyclic torsion $\Z_p\llbracket T\rrbracket$-modules:
\begin{equation}\label{structure isomorphism}
(M')^\vee \simeq \left( \bigoplus_{i=1}^s \frac{\Z_p\llbracket T\rrbracket}{(p^{n_i})} \right) \oplus \left( \bigoplus_{j=1}^t \frac{\Z_p\llbracket T\rrbracket}{(f_j(T))} \right).
\end{equation}
Here, $ s $ and $ t $ are natural numbers (possibly $ 0 $), $ n_i \in \mathbb{Z}_{\geq 1} $, and $ f_j(T) $ are distinguished polynomials. The Iwasawa $\mu$- and $\lambda$-invariants are then defined as follows:
\[
\mu_p(M) := \sum_{i=1}^s n_i \quad \text{and} \quad \lambda_p(M) := \sum_{j=1}^t \deg f_j.
\]
If $ s = 0 $ (respectively, $ t = 0 $), the sum $\sum_{i=1}^s n_i$ (respectively, $\sum_{j=1}^t \deg f_j$) is taken to be $ 0 $. 
\begin{lemma}\label{basic lemma on lambda and corank}
    Let $M$ be a cofinitely generated and cotorsion $\Z_p\llbracket T\rrbracket$-module. Then the following conditions are equivalent. 
    \begin{enumerate}
        \item The module $M$ is cotorsion $\Lambda$-module and $\mu_p(M)=0$. 
        \item The module $M$ is cofinitely generated as a $\Z_p$-module. 
    \end{enumerate}
    Moreover, if the above (equivalent) conditions are satisfied, then, \[M\simeq \left(\Q_p/\Z_p\right)^{\lambda_p(M)}\oplus A,\] where $A$ is a finite $\Z_p$-module. 
\end{lemma}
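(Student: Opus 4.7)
The plan is to deduce everything from the structure theorem stated in \eqref{structure isomorphism}, together with the standard structure theorem for finitely generated $\Z_p$-modules, interpreted appropriately under Pontryagin duality.

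First I would set up the dictionary. Since $M$ is cofinitely generated and cotorsion, $M^\vee$ is a finitely generated torsion $\Z_p\llbracket T\rrbracket$-module, and the structure theorem provides a pseudo-isomorphism
\[
M^\vee \sim \left(\bigoplus_{i=1}^s \Z_p\llbracket T\rrbracket/(p^{n_i})\right) \oplus \left(\bigoplus_{j=1}^t \Z_p\llbracket T\rrbracket/(f_j(T))\right),
\]
with each $f_j$ distinguished. The key observation I would invoke is that, as a $\Z_p$-module, $\Z_p\llbracket T\rrbracket/(f_j(T))$ is free of rank $\deg f_j$ (by Weierstrass preparation or directly since $f_j$ is monic), whereas $\Z_p\llbracket T\rrbracket/(p^{n_i})$ is not finitely generated over $\Z_p$. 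Thus the $\Z_p$-rank of the right-hand side is $\sum_j \deg f_j = \lambda_p(M)$, and the right-hand side is finitely generated over $\Z_p$ if and only if $s=0$, i.e.\ $\mu_p(M) = 0$.

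Next I would prove the equivalence (1) $\Leftrightarrow$ (2). For (1) $\Rightarrow$ (2), assuming $\mu_p(M)=0$ kills the first summand above, the right-hand side is finitely generated over $\Z_p$, and since pseudo-isomorphism has finite kernel and cokernel, $M^\vee$ is also finitely generated over $\Z_p$; dualizing shows $M$ is cofinitely generated over $\Z_p$. For (2) $\Rightarrow$ (1), if $M^\vee$ is finitely generated over $\Z_p$, then $T$ acts on a finitely generated $\Z_p$-module, so $M^\vee$ is automatically $\Z_p\llbracket T \rrbracket$-torsion; moreover, if $\mu_p(M)>0$ then the $p$-primary summand $\bigoplus \Z_p\llbracket T\rrbracket/(p^{n_i})$ would force the pseudo-isomorphic module (and hence $M^\vee$) to have infinite $\Z_p$-rank, a contradiction.

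For the final assertion, I would abandon the structure theorem for $\Z_p\llbracket T\rrbracket$-modules (which only yields pseudo-isomorphism) and invoke instead the structure theorem for finitely generated $\Z_p$-modules, which gives an honest isomorphism
\[
M^\vee \simeq \Z_p^{r} \oplus A',
\]
with $A'$ a finite $\Z_p$-module. The $\Z_p$-rank $r$ must equal $\lambda_p(M)$ because this rank is preserved under pseudo-isomorphism and agrees with $\sum_j\deg f_j$ by the computation above. Dualizing yields $M \simeq (\Q_p/\Z_p)^{\lambda_p(M)} \oplus A$ with $A := (A')^\vee$ finite. The main (mild) obstacle is the transition from the pseudo-isomorphism of \eqref{structure isomorphism} to an honest decomposition; this is precisely resolved by switching category from $\Z_p\llbracket T\rrbracket$-modules to $\Z_p$-modules, where pseudo-isomorphism is upgraded to isomorphism.
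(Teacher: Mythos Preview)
Your approach is correct and is exactly what the paper intends: it simply says the result ``is an easy consequence of the structure isomorphism \eqref{structure isomorphism}, the details are omitted,'' and you have supplied those details. One small correction: in your argument for (2) $\Rightarrow$ (1), the module $\Z_p\llbracket T\rrbracket/(p^{n_i})$ is $p$-torsion and hence has $\Z_p$-rank zero, not infinite rank; the correct objection (which you stated earlier in the proposal) is that it is not finitely generated over $\Z_p$.
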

\begin{proof}
    The result is an easy consequence of the structure isomorphism \eqref{structure isomorphism}, the details are omitted. 
\end{proof}

\par It then follows from results of Kato \cite{Katomodform} and Rubin \cite{RubinBSD} that $\op{Sel}_{p^\infty}(E/K_{\op{cyc}})$ is cofinitely generated and cotorsion over $\Lambda_{\op{cyc}}$. 

\subsection{Hilbert's tenth problem for number rings}\label{s 2.2}
Let $ A $ be a commutative ring with an identity element, and consider $ A^n $, which is the free $ A $-module of rank $ n $. This module consists of tuples $ a = (a_1, \dots, a_n) $ where each $ a_i $ is an element of $ A $. Suppose $ m $ and $ n $ are positive integers. For $ a = (a_1, \dots, a_n) \in A^n $ and $ b = (b_1, \dots, b_m) \in A^m $, we denote by $ (a,b) \in A^{n+m} $ the concatenated tuple $ (a_1, \dots, a_n, b_1, \dots, b_m) $. Given a finite collection of polynomials $ F_1, \dots, F_k $, we define the set 
\[
\mathcal{F}(a; F_1, \dots, F_k) := \{ b \in A^m \mid F_i(a,b) = 0 \text{ for all } i = 1, \dots, k \}.
\]

\begin{definition}\label{diophantine subset}
A subset $ S $ of $ A^n $ is called a \emph{diophantine subset} of $ A^n $ if there exists an integer $ m \geq 1 $ and a collection of polynomials \[F_1, \dots, F_k \in A[x_1, \dots, x_n, y_1, \dots, y_m] \] such that $ S $ consists of all $ a \in A^n $ for which the set $ \mathcal{F}(a; F_1, \dots, F_k) $ is nonempty.
\end{definition} 

\begin{definition}
An extension of number fields $ L/K $ is said to be \emph{integrally diophantine} if the ring of integers $ \mathcal{O}_K $ is a diophantine subset of the ring of integers $ \mathcal{O}_L $.
\end{definition} Let $ L' $, $ L $, and $ K $ be number fields such that 
\[ L' \supseteq L \supseteq K. \]
We assume that both $ L'/L $ and $ L/K $ are integrally diophantine extensions. It is a basic fact in the theory of diophantine sets and fields that, under these conditions, the composite extension $ L'/K $ is also integrally diophantine. This assertion follows from \cite[Theorem 2.1.15]{shlapentokh2007hilbert}. In fact, \cite{shlapentokh2007hilbert} provides a comprehensive treatment of Hilbert's tenth problem for rings of integers in number fields.

In the context of integrally diophantine extensions, we recall a conjecture proposed by Denef and Lipshitz \cite{denef1978diophantine}.

\begin{conjecture}[Denef-Lipshitz]\label{denef lipchitz conjecture}
For any number field $ L $, the extension $ L/\Q $ is integrally diophantine.
\end{conjecture}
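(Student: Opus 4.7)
The statement is Conjecture \ref{denef lipchitz conjecture}, the full Denef--Lipshitz conjecture, which is a central open problem rather than a theorem I can realistically settle. So a proof proposal must be honest: I will outline the only strategy currently available, reduce it to an existence statement about elliptic curves, and explain why the reduction step is routine while the existence step is the genuine obstruction.

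The plan is to prove the equivalent reformulation: for every number field $L$, the subring $\Z$ is a diophantine subset of $\cO_L$. By a well-known packaging argument, this reformulation implies that $L/\Q$ is integrally diophantine, since a diophantine definition of $\Z$ inside $\cO_L$ can be combined with the defining equations of $\cO_\Q = \Z$ as a subset of $\cO_L$. First, I would reduce to the case where $L/\Q$ is a tower of prime-degree extensions. Choose a Galois closure $\widetilde{L}/\Q$ and a chain
\[
\Q = L_0 \subseteq L_1 \subseteq \cdots \subseteq L_r = \widetilde{L}
\]
where each $L_{i+1}/L_i$ is cyclic of prime degree. If each step is integrally diophantine, then by transitivity \cite[Theorem 2.1.15]{shlapentokh2007hilbert} the composite $\widetilde{L}/\Q$ is integrally diophantine, and hence so is $L/\Q$ (any intermediate extension of an integrally diophantine extension is itself integrally diophantine). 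This reduces the conjecture to proving integral diophantineness of every cyclic prime-degree extension of number fields.

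Next, for a cyclic prime-degree extension $F/E$, I would invoke Shlapentokh's criterion (Theorem \ref{shlap 1} in the paper): it suffices to produce an elliptic curve $\mathcal{E}_{/E}$ such that $\op{rank}\mathcal{E}(F) = \op{rank}\mathcal{E}(E) > 0$. The plan is to construct $\mathcal{E}$ using the Iwasawa--theoretic philosophy developed in this paper, extended to general $\Z_p$-extensions. Concretely, one wants an elliptic curve $\mathcal{E}_{/E}$ of positive Mordell--Weil rank such that the relevant Selmer group is controlled by an Iwasawa invariant computation showing that the rank does not grow when one passes from $E$ to $F$. For the cyclotomic and anticyclotomic case the tools of Perrin-Riou--Schneider and Greenberg--Vatsal already give such rigidity results under explicit local hypotheses, and the main theorems of this paper extend that to generic lines in $\mathbb{P}^1(\Z_p)$ of $\Z_p^2$-extensions of imaginary quadratic fields.

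The main obstacle, and the reason the conjecture remains open, is precisely this existence step: there is no known construction that produces, for every cyclic prime-degree extension $F/E$ of arbitrary number fields, an elliptic curve $\mathcal{E}_{/E}$ with provably stable positive rank in $F$. The Iwasawa-theoretic approach requires, at minimum, that $F$ sit inside a $\Z_p$-extension of $E$ for some prime $p$ and that one can verify conditions of Euler-characteristic type (analogous to items (1)--(5) of Theorem~A of the paper) for some elliptic curve of analytic rank one. For a fixed $F/E$ one can sometimes arrange this by twisting, but doing so uniformly in $F$ would require, for instance, a strong form of rank-one density for quadratic twists plus control of Tamagawa and regulator factors, neither of which is known unconditionally. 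My proposal, therefore, is to treat the reduction to cyclic prime-degree extensions and to Shlapentokh's rank criterion as the routine part, and to isolate as the genuine hard step the assertion: for every number field $E$ and every cyclic prime-degree extension $F/E$, there exists an elliptic curve $\mathcal{E}_{/E}$ of positive rank whose rank is preserved under base change to $F$. Any full proof of Conjecture \ref{denef lipchitz conjecture} along these lines must first resolve this existence problem, which is where I would expect the argument to stall.
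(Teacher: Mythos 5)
The statement you were asked to prove is Conjecture \ref{denef lipchitz conjecture}, which the paper itself does not prove and which remains open: the paper only records it as motivation and then establishes special cases (negative answers to Hilbert's tenth problem in the finite layers of most $\Z_p$-extensions $K_{a,b}$ of suitable imaginary quadratic fields, via Theorems \ref{main thm 1}--\ref{main thm 3} and the density results of Section \ref{s 5}). So there is no proof in the paper to compare yours against, and your refusal to claim a complete argument is the correct assessment. Your outline is also essentially the strategy the paper (and the literature it cites) actually follows where it can: reduce to relative extensions, invoke Shlapentokh's criterion (Theorem \ref{shlap 1}), and manufacture an elliptic curve of positive rank whose rank is rigid in the extension, with the rigidity supplied by Iwasawa-theoretic control of Selmer groups and Euler characteristic computations. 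Two small remarks on the reduction: transitivity is indeed \cite[Theorem 2.1.15]{shlapentokh2007hilbert}, but the ``going down'' step (from $\widetilde{L}/\Q$ integrally diophantine to $L/\Q$ integrally diophantine for an intermediate $L$) is not a formal triviality; it uses the existential interpretability of $\cO_{\widetilde{L}}$ over $\cO_L$ as a finitely generated module, which is standard but should be cited rather than asserted.

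The genuine gap is exactly where you locate it, and it is worth being even more precise about why the Iwasawa-theoretic route cannot close it as stated: a general cyclic degree-$p$ extension $F/E$ need not embed in any $\Z_p$-extension of $E$ at all (by class field theory the composite of all $\Z_p$-extensions of $E$ is a $\Z_p^{r_2+1+\delta}$-extension, ramified only above $p$, so ``most'' cyclic $p$-extensions are invisible to this machinery), and even for those that do embed, verifying the hypotheses of Euler-characteristic type (analogues of conditions (1)--(5) of Theorems \ref{main thm 2} and \ref{main thm 3}) for some auxiliary curve requires unproven inputs such as rank-one twist density and $p$-indivisibility of regulators and Tamagawa factors, uniformly in $E$. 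This is why the paper restricts to imaginary quadratic $K$ and to generic lines $(a,b)\in\mathbb{P}^1(\Z_p)$, and proves positive-density rather than universal statements. In short: your proposal is an honest and accurate description of the state of the art, not a proof, and no proof of the conjecture should be expected from you or from the paper.
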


This conjecture posits that for any number field $ L $, the ring of integers $\cO_\Q = \Z$ is a diophantine subset of the ring of integers $\cO_L$. The veracity of this conjecture would imply a significant expansion of the negative resolution of Hilbert's tenth problem. 

The renowned result by Matijasevich, building on work by Davis, Putnam, and Robinson, demonstrated that Hilbert's tenth problem, which asks whether there is an algorithm to determine the solvability of arbitrary diophantine equations over $\Z$, has a negative answer. Specifically, there is no such algorithm. If Conjecture \ref{denef lipchitz conjecture} holds for a number field $ L $, it would imply that Hilbert's tenth problem also has a negative answer for the ring of integers $\cO_L$. In other words, there would be no algorithm to determine the solvability of diophantine equations over $\cO_L$. In addition to the cases discussed in the introduction, there have been significant advancements in establishing new criteria for the validity of Conjecture \ref{denef lipchitz conjecture}. Notable contributions have been made by Poonen \cite{poonen2002using}, Cornelissen, Pheidas, and Zahidi \cite{cornelissen2005division}, and Shlapentokh \cite{shlapentokh2008elliptic}. These researchers have introduced criteria linked to the stability of the Mordell-Weil rank of elliptic curves over number fields.

One particularly influential result is Shlapentokh's criterion, which relates diophantine stability for elliptic curves with positive rank to Hilbert's tenth problem for rings of integers in number fields:

\begin{theorem}[Shlapentokh]\label{shlap 1}
Let $ L/K $ be an extension of number fields, and suppose there exists an elliptic curve $ E $ defined over $ K $ such that $\operatorname{rank} E(L) = \operatorname{rank} E(K) > 0$. Then, $ L/K $ is integrally diophantine. Consequently, if Hilbert's tenth problem has a negative answer for $\cO_K$, then Hilbert's tenth problem also has a negative answer for $\cO_L$.
\end{theorem}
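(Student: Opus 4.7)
The plan is to construct an explicit diophantine definition of $\cO_K$ inside $\cO_L$, using the elliptic curve $E$ of stable positive rank as the arithmetic backbone. As a first step, the hypothesis $\op{rank} E(L)=\op{rank} E(K)>0$ combined with the Mordell--Weil theorem forces $E(L)/E(K)$ to be finite, since it is a finitely generated abelian group of rank zero. I would fix a point $P\in E(K)$ of infinite order and, after replacing $P$ by a suitable positive integer multiple, arrange that the cyclic subgroup $\langle P\rangle\subset E(K)$ captures all of the rank of $E(L)$ up to a controlled finite torsion subgroup.

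The core of the argument is to turn the embedding $\Z\hookrightarrow E(L)$, $n\mapsto nP$, into a diophantine description first of $\Z$ and then of $\cO_K$ inside $\cO_L$. The key tool is the arithmetic of \emph{elliptic divisibility sequences}: writing $x(nP)=A_n/D_n^2$ in lowest terms, the denominator ideals $(D_n)$ form a strong divisibility sequence whose valuations at a chosen prime of good reduction grow essentially quadratically in $n$. Combined with polynomial identities expressing $nP$ via division polynomials in $P$, this allows us to encode the relation ``$Q=nP$'' by polynomial equations over $\cO_L$, and hence to diophantineally single out the cyclic subgroup $\langle P\rangle\subset E(L)$. Pulling back $x$-coordinates produces a diophantine subset of $\cO_L$ that is in bijection with $\Z$ and actually lies in $\cO_K$. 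Using an integral basis of $\cO_K$ over $\Z$, together with the relative norm $N_{L/K}$ (which is polynomial in the coordinates of such a basis), one bootstraps this to a diophantine definition of all of $\cO_K$ inside $\cO_L$.

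The hard part, and the genuine content of Shlapentokh's theorem, is precisely the middle step: uniformly distinguishing multiples of $P$ from arbitrary points of $E(L)$ by a finite list of polynomial conditions over $\cO_L$. This is accomplished by the \emph{weak vertical method}, exploiting the strong divisibility $D_m\mid D_{mn}$ together with the quadratic growth of denominators to force an unknown integer index $n$ to be a rational integer rather than a general element of $\cO_L$. Without a point of infinite order the whole construction collapses, which is why the hypothesis $\op{rank} E(K)>0$ is indispensable, and why \emph{stability} of the rank in passing to $L$ is what allows the same $P$ to do the job over $\cO_L$.

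Finally, the second assertion of the theorem is a formal consequence of the first: once $\cO_K$ is a diophantine subset of $\cO_L$, any diophantine subset of $\cO_K$ is automatically diophantine in $\cO_L$. Thus a negative answer to Hilbert's tenth problem over $\cO_K$, interpreted (via Matijasevich's theorem) as the diophantineness of $\Z$ in $\cO_K$, immediately transfers to give a negative answer over $\cO_L$.
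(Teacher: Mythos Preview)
The paper does not actually prove this statement: Theorem~\ref{shlap 1} is quoted from Shlapentokh's work \cite{shlapentokh2008elliptic} and used as a black box, with no argument given. So there is no ``paper's own proof'' to compare against; your sketch is essentially an outline of Shlapentokh's original argument (weak vertical method, elliptic divisibility sequences, denominator growth), and at that level of detail it is a fair summary of how the result is established in the literature.

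That said, your final paragraph contains a logical slip. You write that a negative answer to Hilbert's tenth problem over $\cO_K$ can be ``interpreted (via Matijasevich's theorem) as the diophantineness of $\Z$ in $\cO_K$''. This is backwards: diophantineness of $\Z$ in $\cO_K$ \emph{implies} undecidability over $\cO_K$, but the converse is not known and is not what is being used here. The correct (and simpler) argument is purely formal: once $\cO_K$ is diophantine in $\cO_L$, any polynomial system over $\cO_K$ can be rewritten as one over $\cO_L$ by adjoining the defining equations of $\cO_K$, so a decision procedure over $\cO_L$ would yield one over $\cO_K$. No reference to $\Z$ is needed for this step.
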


This theorem demonstrates that the existence of an elliptic curve $ E $ over $ K $ with equal positive rank over both $ K $ and $ L $ ensures that the ring of integers $\cO_K$ is a diophantine subset of $\cO_L$. This result provides a powerful tool for transferring the negative solution of Hilbert's tenth problem from $\cO_K$ to $\cO_L$, thereby expanding the range of number fields for which the problem is known to have no algorithmic solution.

\section{Control theory for Selmer groups and the Euler characteristic formula}\label{s 3}

\par In this section, we briefly review Mazur's control theorem as well as the Euler characteristic formula for the Selmer group, due to Perrin-Riou and Schneider. Throughout we let $p$ be an odd prime number, $E$ be an elliptic curve over a number field $F$ and let $F_{\op{cyc}}$ be the cyclotomic $\Z_p$-extension of $F$. Fix an algebraic closure $\bar{F}$ of $F$. Set $F_{\op{cyc}}^{(n)}/F$ to be the subextension for which $[F_{\op{cyc}}^{(n)}:F]$. Let $\Gamma_{F_{\op{cyc}}}:=\op{Gal}(F_{\op{cyc}}/F)$ and $\Gamma_{F_{\op{cyc}}, n}:=\op{Gal}(F_{\op{cyc}}/F_n)=\Gamma_{F_{\op{cyc}}}^{p^n}$. Let $S$ be a set of primes of $F$ containing the primes $v|p$ and the primes at which $E$ has bad reduction. Denote by $F_S$ the maximal extension of $F$ that is contained in $\bar{F}$, in which the primes $v\notin S$ are unramified. 

\subsection{Mazur's control theorem}
There is a natural map
\[
\alpha_n: \op{Sel}_{p^\infty}(E/F_{\op{cyc}}^{(n)}) \longrightarrow \op{Sel}_{p^\infty}(E/F_{\op{cyc}})^{\Gamma_{F_{\op{cyc}}, n}},
\]
which fits into the following commutative diagram:
\[
\begin{tikzcd}[column sep = small, row sep = large]
0 \arrow{r} & \op{Sel}_{p^\infty}(E/F_{\op{cyc}}^{(n)}) \arrow{r} \arrow{d}{\alpha_n} & H^1(F_S/F_{\op{cyc}}^{(n)}, E[p^\infty]) \arrow{r} \arrow{d}{\beta_n} & \bigoplus_{v \in S} J_v(E/F_{\op{cyc}}^{(n)}) \arrow{d}{\gamma_n} \\
0 \arrow{r} & \op{Sel}_{p^\infty}(E/F_{\op{cyc}})^{\Gamma_{F_{\op{cyc}}, n}} \arrow{r} & H^1(F_S/F_{\op{cyc}}, E[p^\infty])^{\Gamma_{F_{\op{cyc}}, n}} \arrow{r} & \left(\bigoplus_{v \in S} J_v(E/F_{\op{cyc}})\right)^{\Gamma_{F_{\op{cyc}}, n}}.
\end{tikzcd}
\]
Here, the maps $\beta_n$ and $\gamma_n$ are induced by the restriction maps in Galois cohomology.

\begin{theorem}[Mazur]
    The maps $\alpha_n$ described above have finite kernels and cokernels whose orders are uniformly bounded, independent of $n$.
\end{theorem}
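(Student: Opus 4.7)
The plan is to apply the snake lemma to the displayed commutative diagram, which reduces the problem of uniformly bounding $\ker\alpha_n$ and $\coker\alpha_n$ to showing that $\ker\beta_n$, $\coker\beta_n$, and $\ker\gamma_n$ are all finite with orders bounded independently of $n$.

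For the global map $\beta_n$, I would apply the Hochschild--Serre spectral sequence, which yields the inflation--restriction sequence
\begin{equation*}
0 \to H^1(\Gamma_{F_{\op{cyc}},n}, E[p^\infty]^{G_{F_{\op{cyc}}}}) \to H^1(F_S/F^{(n)}_{\op{cyc}}, E[p^\infty]) \xrightarrow{\beta_n} H^1(F_S/F_{\op{cyc}}, E[p^\infty])^{\Gamma_{F_{\op{cyc}},n}} \to H^2(\Gamma_{F_{\op{cyc}},n}, E[p^\infty]^{G_{F_{\op{cyc}}}}).
\end{equation*}
Since $\Gamma_{F_{\op{cyc}},n}\cong\Z_p$ has $p$-cohomological dimension one, the rightmost term vanishes, so $\beta_n$ is surjective and $\coker\beta_n=0$. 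The kernel is $H^1$ of a procyclic group acting on the finite module $M:=E[p^\infty]^{G_{F_{\op{cyc}}}}$ (finite since $F_{\op{cyc}}$ cannot absorb infinitely many $p$-power torsion points of $E$), and the standard identity $|H^1(\Gamma_{F_{\op{cyc}},n},M)|=|M^{\Gamma_{F_{\op{cyc}},n}}|\leq|M|$ for finite modules yields a bound uniform in $n$.

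For the local map $\gamma_n$, I would decompose $\ker\gamma_n=\bigoplus_{v\in S}\ker\gamma_{n,v}$ and analyze each prime separately using a parallel local inflation--restriction argument. For primes $v\nmid p$, the local extension $F_{\op{cyc},w}/F_v$ is either trivial or unramified, $E(F_{\op{cyc},w})[p^\infty]$ is finite, and the resulting local kernels are finite and uniformly bounded (in fact they stabilize after finitely many layers). For primes $v\mid p$, I would invoke the good ordinary reduction hypothesis and exploit the connected--\'etale filtration $0\to\widehat{E}[p^\infty]\to E[p^\infty]\to\widetilde{E}[p^\infty]\to 0$ to rewrite the local Selmer quotients in cohomological terms amenable to a finiteness estimate. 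The main obstacle is this last step at primes above $p$: the tower $F_{\op{cyc},w}/F_v$ is deeply ramified, the residue field absorbs more and more $p$-power torsion of $\widetilde{E}$, and one must carefully combine local Tate duality with the ordinary filtration to obtain a uniform bound. Once these three bounds are in hand, the snake lemma assembles them into the asserted uniform control of $\ker\alpha_n$ and $\coker\alpha_n$.
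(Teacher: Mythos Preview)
The paper does not supply a proof of this statement; it is simply recorded as Mazur's control theorem and used as a black box. Your outline is the standard argument (as in Greenberg's Cetraro notes or Mazur's original paper): apply the snake lemma to the displayed diagram and bound $\ker\beta_n$, $\coker\beta_n$, $\ker\gamma_n$ via inflation--restriction. That strategy is correct.

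Two small corrections to the sketch. First, the finiteness of $E(F_{\op{cyc}})[p^\infty]$ is not automatic; it is a theorem of Imai and uses the good reduction hypothesis at $p$. Second, your stated worry at $v\mid p$ that ``the residue field absorbs more and more $p$-power torsion of $\widetilde{E}$'' is misplaced: the cyclotomic $\Z_p$-extension is (eventually) totally ramified at primes above $p$, so the residue field extension is finite and $\widetilde{E}(\kappa_w)[p^\infty]$ is uniformly bounded from the outset. The genuine issue at $v\mid p$ is rather to identify the local condition over the deeply ramified local tower; under the good ordinary hypothesis this is handled by the Coates--Greenberg theory of formal groups over deeply ramified extensions (or Mazur's original formal-group argument), after which $\ker\gamma_{n,v}$ is controlled by the fixed finite group $\widetilde{E}(\kappa_v)[p^\infty]$.
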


We now present a criterion for $\op{Sel}_{p^\infty}(E/F_{\op{cyc}})$ to be a cofinitely generated and cotorsion $\Lambda$-module.

\begin{theorem}
    If $\op{Sel}_{p^\infty}(E/F)$ is finite, then $\op{Sel}_{p^\infty}(E/F_{\op{cyc}})$ is cofinitely generated and cotorsion.
\end{theorem}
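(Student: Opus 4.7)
The plan is to deduce the statement from Mazur's control theorem (applied at level $n=0$) together with a topological Nakayama lemma for compact $\Lambda$-modules. The key point is that finiteness of the Selmer group at the base level forces the $\Gamma$-coinvariants of the dual Selmer group over $F_{\op{cyc}}$ to be finite, which is enough input to pin down both the finite generation and the torsion property.

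First, I would instantiate Mazur's control theorem (the theorem just stated in the excerpt) with $n = 0$. This gives that the natural map
\[
\alpha_0 \colon \op{Sel}_{p^\infty}(E/F) \longrightarrow \op{Sel}_{p^\infty}(E/F_{\op{cyc}})^{\Gamma_{F_{\op{cyc}}}}
\]
has finite kernel and cokernel. Combined with the hypothesis that $\op{Sel}_{p^\infty}(E/F)$ is finite, this immediately yields that $\op{Sel}_{p^\infty}(E/F_{\op{cyc}})^{\Gamma_{F_{\op{cyc}}}}$ is finite.

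Next, fix a topological generator $\gamma$ of $\Gamma_{F_{\op{cyc}}}$, set $T = \gamma - 1$, and identify $\Lambda = \Z_p\llbracket \Gamma_{F_{\op{cyc}}}\rrbracket$ with $\Z_p\llbracket T\rrbracket$. Write $X := \op{Sel}_{p^\infty}(E/F_{\op{cyc}})^\vee$, which is a compact $\Lambda$-module. Pontryagin duality converts $\Gamma$-invariants of the Selmer group into $\Gamma$-coinvariants of $X$, so
\[
X/TX \;\simeq\; \bigl(\op{Sel}_{p^\infty}(E/F_{\op{cyc}})^{\Gamma_{F_{\op{cyc}}}}\bigr)^\vee,
\]
which is finite by the previous paragraph. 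A fortiori $X/\mathfrak{m}X$ is finite, where $\mathfrak{m} = (p,T)$ is the maximal ideal of $\Lambda$. The topological Nakayama lemma for compact $\Lambda$-modules then gives that $X$ is finitely generated over $\Lambda$, i.e.\ $\op{Sel}_{p^\infty}(E/F_{\op{cyc}})$ is cofinitely generated.

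Finally, to upgrade from finitely generated to finitely generated \emph{and} torsion, I would invoke the structure theorem \eqref{structure isomorphism}: a finitely generated $\Lambda$-module $X$ is pseudo-isomorphic to $\Lambda^r \oplus (\text{torsion})$ for some $r \geq 0$. If $r \geq 1$, then $X/TX$ would contain (up to finite kernel/cokernel) a copy of $\Lambda/T\Lambda \simeq \Z_p$, contradicting the finiteness established above. Hence $r = 0$, so $X$ is $\Lambda$-torsion, i.e.\ $\op{Sel}_{p^\infty}(E/F_{\op{cyc}})$ is cotorsion. The only step requiring care is the invocation of topological Nakayama, which needs $X$ to be compact (true, since the Selmer group is discrete $p$-torsion) so that the ``lift a set of generators of $X/\mathfrak{m}X$'' argument goes through; this is standard but is the main non-formal input of the proof.
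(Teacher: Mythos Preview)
Your argument is correct and is essentially the standard proof of this fact. The paper itself does not give an argument: its entire proof is the sentence ``For the proof, see \cite[Theorem 1.4]{GreenbergIwasawa}.'' What you have written is, in outline, exactly the argument Greenberg gives in that reference (control theorem at level $0$ $\Rightarrow$ finiteness of $\Gamma$-invariants $\Rightarrow$ finiteness of $X/TX$ by duality $\Rightarrow$ finite generation by topological Nakayama $\Rightarrow$ torsion by comparing with the free part in the structure theorem).

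One small cosmetic point: the structure decomposition \eqref{structure isomorphism} in this paper is stated only for modules already known to be cofinitely generated and cotorsion, so when you invoke it to rule out a free summand $\Lambda^r$ you are implicitly using the more general form of the structure theorem for arbitrary finitely generated $\Lambda$-modules. That is of course standard, but if you want to stay strictly within the paper's stated toolkit you could instead argue directly that $X/TX$ finite forces $\op{rank}_\Lambda X = 0$ (e.g.\ because $\op{rank}_{\Z_p}(X/TX) \geq \op{rank}_\Lambda X$ for any finitely generated $\Lambda$-module $X$).
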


\begin{proof}
    For the proof, see \cite[Theorem 1.4]{GreenbergIwasawa}.
\end{proof}

\begin{theorem}[Kato–Rohrlich]
    Assume $E$ is defined over $\mathbb{Q}$ and $F$ is an abelian extension of $\mathbb{Q}$. Then, $\op{Sel}_{p^\infty}(E/F_{\op{cyc}})$ is cofinitely generated and cotorsion.
\end{theorem}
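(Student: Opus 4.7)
The plan is to reduce the assertion to Kato's main theorem for $E/\Q_{\op{cyc}}$ and its twisted variants, combined with Rohrlich's non-vanishing theorem. Since $F/\Q$ is abelian, Kronecker--Weber gives $F\subset \Q(\mu_N)$ for some $N$, and hence $F_{\op{cyc}}\subset \Q(\mu_{Np^\infty})$. In particular $F_{\op{cyc}}/\Q_{\op{cyc}}$ is a finite abelian extension with Galois group $\Delta$, which is a quotient of $\op{Gal}(F/\Q)$.

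I would then fix a finite extension $L/\Q_p$ with ring of integers $\cO_L$ containing the values of every character $\chi$ of $\Delta$. Applying Shapiro's lemma to both the global cohomology groups and the local terms defining the Selmer group, and decomposing the induced module $\mathrm{Ind}_{G_{F_{\op{cyc}}}}^{G_{\Q_{\op{cyc}}}} E[p^\infty]\otimes_{\Z_p}\cO_L$ via the orthogonal idempotents of $\cO_L[\Delta]$, one obtains a $\Lambda_{\op{cyc}}\otimes\cO_L$-linear isomorphism
\[
\op{Sel}_{p^\infty}(E/F_{\op{cyc}})\otimes_{\Z_p}\cO_L \;\cong\; \bigoplus_{\chi\in\widehat{\Delta}} \op{Sel}_{p^\infty}(E\otimes\chi^{-1}/\Q_{\op{cyc}})\otimes_{\Z_p}\cO_L,
\]
where $E\otimes\chi^{-1}$ denotes the Galois-theoretic twist.

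For each character $\chi$, Kato's Euler system produces a nonzero element of the characteristic ideal of $\op{Sel}_{p^\infty}(E\otimes\chi^{-1}/\Q_{\op{cyc}})^\vee$, interpolating the complex $L$-values $L(E,\chi^{-1}\psi,1)$ as $\psi$ ranges over finite-order characters of $\Gamma_{\op{cyc}}$. Rohrlich's non-vanishing theorem ensures that infinitely many of these values are nonzero, so the associated $p$-adic $L$-function is a nonzero element of $\Lambda_{\op{cyc}}\otimes\cO_L$, forcing each $\chi$-component to be cotorsion. Summing over $\chi$ and applying faithfully flat descent along $\Z_p\to\cO_L$ yields cotorsion of $\op{Sel}_{p^\infty}(E/F_{\op{cyc}})$ over $\Lambda_{\op{cyc}}$; cofinite generation follows from finite generation of $H^1(\Q_S/F_{\op{cyc}},E[p^\infty])^\vee$, exactly as in Kato's original setting.

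The main obstacle is the character-isotypic decomposition when $p\mid |\Delta|$, since then $\Z_p[\Delta]$ is no longer semisimple. One circumvents this by noting that cotorsion can be checked after inverting $p$: over the larger ring $\Lambda_{\op{cyc}}\otimes_{\Z_p}\overline{\Q}_p$ the character decomposition becomes a genuine direct sum, and the $\chi$-components are cotorsion by the previous paragraph. Equivalently, one can proceed by induction on $|\Delta|$, reducing to the cyclic case and exploiting the inflation--restriction sequence together with the fact that cohomology of a finite group is annihilated by its order.
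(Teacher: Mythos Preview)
The paper does not actually prove this theorem: it is stated as a known result attributed to Kato and Rohrlich, with no argument supplied. Your sketch is therefore not competing against anything in the paper, and it is in fact the standard route to the result via Kato's Euler system and Rohrlich's non-vanishing theorem, so the overall strategy is sound.

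Two minor corrections are worth making. First, $\Delta=\op{Gal}(F_{\op{cyc}}/\Q_{\op{cyc}})$ is identified (via restriction to $F$) with the \emph{subgroup} $\op{Gal}(F/F\cap\Q_{\op{cyc}})$ of $\op{Gal}(F/\Q)$, not a quotient; this does not affect the argument but should be stated correctly. Second, the displayed isomorphism of Selmer groups via idempotents is only literally valid when $p\nmid |\Delta|$; you acknowledge this, and your workaround of checking cotorsion after tensoring with $\Q_p$ (where $\overline{\Q}_p[\Delta]$ is semisimple regardless) is the clean way to proceed, since a finitely generated $\Lambda_{\op{cyc}}$-module $M$ is torsion if and only if $M\otimes_{\Z_p}\Q_p$ is torsion over $\Lambda_{\op{cyc}}\otimes_{\Z_p}\Q_p$. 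With those adjustments the argument goes through; the appeal to Kato for each character twist is exactly what \cite{Katomodform} provides.
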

\subsection{Generalities on the Euler characteristic}
In this section, let $\Gamma\simeq \Z_p$ and $\Lambda:=\Z_p\llbracket \Gamma \rrbracket$. Let $ M $ be a $\Lambda$-module that is cofinitely generated and cotorsion. We focus on two derived modules: the module of invariants $ H^0(\Gamma, M) = M^\Gamma $ and the module of co-invariants $ H^1(\Gamma, M) = M_\Gamma = M / TM $. There is a natural map between these modules:
\[
\phi_M: M^\Gamma \rightarrow M_\Gamma,
\]
sending an element $ x \in M^\Gamma $ to $ x \mod{TM} $ in $ M_\Gamma $. Since $ M $ is cofinitely generated over $\Lambda$, both $ M^\Gamma $ and $ M_\Gamma $ are cofinitely generated over $\Z_p$. Additionally, since $\Gamma \cong \Z_p$ has cohomological dimension 1, $ H^i(\Gamma, \cdot) = 0 $ for $ i \geq 2 $. For such a module $ M $, the coranks of $ M^\Gamma $ and $ M_\Gamma $ as $\Z_p$-modules are equal:
    \[
    \operatorname{corank}_{\Z_p} M^\Gamma = \operatorname{corank}_{\Z_p} M_\Gamma,
    \]
cf. \cite[Lemma 2.1]{stats1}. A key consequence of this lemma is that $ M^\Gamma $ is finite if and only if $ M_\Gamma $ is finite.

\begin{definition}
    Let $ M $ be a $\Lambda$-module that is cofinitely generated and cotorsion. The \emph{Euler characteristic} of $ M $ is well-defined if $ M^\Gamma $ (and equivalently $ M_\Gamma $) is finite. In this case, the \emph{Euler characteristic} of $ M $ is defined as:
    \[
    \chi(\Gamma, M) := \prod_{i \geq 0} \left(H^i(\Gamma, M)\right)^{(-1)^i} = \left(\frac{\# M^\Gamma}{\# M_\Gamma}\right).
    \]
\end{definition}

There is indeed a generalization of the above notion when $M^\Gamma$ is infinite which we shall treat in the next subsection.
We define the characteristic series of $ M $ as follows.

\begin{definition}
Consider the numbers $n_i$ and polynomials $f_j$ in \eqref{structure isomorphism}. Then the characteristic series $ f_M(T) $ is defined by:
    \[
    f_M(T) := \prod_i p^{n_i} \times \prod_j f_j(T),
    \]
    and can be expressed as:
    \[
    f_M(T) = a_0 + a_1 T + a_2 T^2 + \dots + a_\lambda T^\lambda.
    \]
\end{definition}

\begin{proposition}\label{Prop 3.6}
    For a $\Lambda$-module $ M $ that is cofinitely generated and cotorsion, the following conditions are equivalent:
    \begin{enumerate}
        \item The Euler characteristic $\chi(\Gamma, M)$ is well-defined.
        \item The constant term $ a_0 $ of the characteristic series $ f_M(T) $ is non-zero.
    \end{enumerate}
    Furthermore, if these conditions are satisfied, then $\chi(\Gamma, M)$ is an integer, and it holds that:
    \[
    a_0 \sim \chi(\Gamma, M),
    \]
    where $ a \sim b $ means $ a = ub $ for some unit $ u \in \Z_p^\times $.
\end{proposition}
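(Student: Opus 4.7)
The plan is to use the structure theorem \eqref{structure isomorphism} to reduce the statement to a pair of explicit computations on cyclic summands. The theorem provides a pseudo-isomorphism $\phi: M \to M'$ where $(M')^\vee = E$ is an elementary $\Lambda$-module of the form $\bigoplus_i \Lambda/(p^{n_i}) \oplus \bigoplus_j \Lambda/(f_j(T))$; by definition $\ker \phi$ and $\coker \phi$ are finite. Splitting the four-term exact sequence $0 \to \ker \phi \to M \to M' \to \coker \phi \to 0$ into two short exact sequences and applying the six-term $\Gamma$-cohomology sequence (which terminates because $\Gamma$ has cohomological dimension $1$), one deduces that $M^\Gamma$ is finite iff $(M')^\Gamma$ is, and that the two Euler characteristics coincide when defined. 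The only input needed here is the elementary observation that any endomorphism of a finite abelian group has kernel and cokernel of equal order, so $\chi(\Gamma, A) = 1$ for finite $A$. Since $\Gamma$-cohomology commutes with direct sums and the characteristic series multiplies over direct summands, it suffices to verify the proposition when $(M')^\vee$ is a single cyclic summand.

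For such cyclic summands I would then use the Pontryagin duality identities $M^\Gamma \simeq (N/TN)^\vee$ and $M_\Gamma \simeq (N^\Gamma)^\vee$ (where $N = M^\vee$) and compute directly. For $N = \Lambda/(p^n)$, multiplication by $T$ is injective on $N$, so $N^\Gamma = 0$ and $N/TN \simeq \Z/p^n\Z$; hence $M_\Gamma = 0$, $|M^\Gamma| = p^n$, and $\chi(\Gamma, M) = p^n = a_0$. For $N = \Lambda/(f(T))$ with $f$ distinguished, $N$ is free of rank $\deg f$ as a $\Z_p$-module; multiplication by $T$ has trivial kernel and cokernel $\Z_p/(f(0))$ precisely when $f(0) \neq 0$, while the cokernel is $\Z_p$ (and therefore infinite) when $f(0) = 0$. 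Consequently $\chi(\Gamma, M)$ is well-defined iff $a_0 = f(0) \neq 0$, and in that case $\chi(\Gamma, M) = |f(0)|_p^{-1} = p^{\ord_p f(0)}$, which equals $a_0$ up to a unit in $\Z_p^\times$. Recombining the cyclic contributions via multiplicativity yields both the equivalence of (1) and (2) and the congruence $a_0 \sim \chi(\Gamma, M)$, with $\chi(\Gamma, M)$ manifestly an integer.

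The main subtlety is the bookkeeping in the reduction step: one must confirm that the finite groups $\ker \phi$ and $\coker \phi$ contribute trivially both to the finiteness criterion and to the alternating product of cohomology orders. This is a routine application of the snake lemma together with the identity $\chi(\Gamma, A) = 1$ for finite $\Gamma$-modules $A$, but it is the only step where genuine care is required; once this is in hand, the cyclic computations are immediate.
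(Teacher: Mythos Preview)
Your argument is correct and complete. The paper itself does not give a proof but simply cites \cite[Proposition 4.4]{ray2024analogue}; you have supplied a self-contained direct proof via the structure theorem, which is the standard route and is exactly what one would expect to find behind that citation. The reduction step (passing through the pseudo-isomorphism using the six-term sequence and $\chi(\Gamma,A)=1$ for finite $A$) and the cyclic computations are both handled correctly, including the key dichotomy for $\Lambda/(f)$ according to whether $f(0)$ vanishes. There is nothing to add.
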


\begin{proof}
   For a proof of the result, see \cite[Proposition 4.4]{ray2024analogue}.
\end{proof}

\begin{lemma}
    For a cofinitely generated and cotorsion $\Lambda$-module $ M $ with a well-defined Euler characteristic, the following conditions are equivalent:
    \begin{enumerate}
        \item\label{p1} $\mu_p(M) = 0$ and $\lambda_p(M) = 0$,
        \item\label{p2} $ a_0 $ is not divisible by $ p $,
        \item\label{p3} $\chi(\Gamma, M) = 1$.
    \end{enumerate}
\end{lemma}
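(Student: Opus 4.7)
The plan is to cycle through the implications by first establishing (1) $\Leftrightarrow$ (2) directly from the structure isomorphism \eqref{structure isomorphism} and the defining property of distinguished polynomials, and then obtaining (2) $\Leftrightarrow$ (3) from the relation $a_0 \sim \chi(\Gamma, M)$ supplied by Proposition \ref{Prop 3.6}.

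For (1) $\Leftrightarrow$ (2), I would expand the constant term of the characteristic series to obtain
\[
a_0 \;=\; \prod_i p^{n_i}\cdot \prod_j f_j(0).
\]
Each $f_j$ is distinguished, so either $\deg f_j = 0$, in which case $f_j = 1$ and contributes nothing, or $\deg f_j \geq 1$, in which case $f_j(0) \equiv 0 \pmod{p}$. Hence $p \nmid a_0$ if and only if the first product is empty (all $n_i = 0$, equivalently $\mu_p(M) = 0$) and every $f_j$ in the second product has degree zero (equivalently $\lambda_p(M) = 0$). Conversely these two vanishings force $a_0 = 1$.

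For (2) $\Leftrightarrow$ (3), Proposition \ref{Prop 3.6} gives $a_0 \sim \chi(\Gamma, M)$ in $\Z_p$. If $\chi(\Gamma, M) = 1$ then $a_0$ is a unit and in particular is not divisible by $p$. For the reverse direction I would assume $p \nmid a_0$; by the equivalence (1) $\Leftrightarrow$ (2) already established, $\mu_p(M) = \lambda_p(M) = 0$, and then Lemma \ref{basic lemma on lambda and corank} (together with the observation that a $\Z_p$-cofinitely generated module with trivial $\lambda$-invariant is finite, since its free $\Z_p$-rank equals $\lambda_p$) shows that $M$ itself is finite. For any finite $\Gamma$-module the action factors through a finite cyclic quotient $\Gamma / \Gamma^{p^n}$, and for such a module the cardinalities of invariants and coinvariants are equal, yielding $\chi(\Gamma, M) = \#M^\Gamma / \#M_\Gamma = 1$.

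The argument is essentially bookkeeping once one combines the structure theorem with Proposition \ref{Prop 3.6}, so I do not anticipate a genuine obstacle. The only point requiring a line of justification is that $\mu_p(M) = \lambda_p(M) = 0$ forces $M$ to be finite (not merely pseudo-null in the abstract sense); this is immediate from the structure isomorphism \eqref{structure isomorphism}, where both families of summands are empty, so $M^\vee$ is pseudo-isomorphic to $0$ and therefore finite over the one-dimensional Iwasawa algebra $\Z_p\llbracket T\rrbracket$.
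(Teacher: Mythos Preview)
Your argument is correct. The paper itself does not give a proof but simply cites \cite[Lemma 4.5]{ray2024analogue}, so your direct derivation from the structure theorem and Proposition~\ref{Prop 3.6} supplies more than what appears here and is presumably close to what the cited reference contains.

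One small simplification: for the implication (2) $\Rightarrow$ (3) you can bypass the finiteness-of-$M$ argument entirely. Since $M$ is a $\Z_p$-module, both $M^\Gamma$ and $M_\Gamma$ are finite abelian $p$-groups, so $\chi(\Gamma,M)=\#M^\Gamma/\#M_\Gamma$ is a power of $p$; Proposition~\ref{Prop 3.6} says it is moreover an integer, hence $\chi(\Gamma,M)=p^k$ for some $k\ge 0$. The relation $a_0\sim\chi(\Gamma,M)$ together with $p\nmid a_0$ then forces $k=0$. Your route via $\mu=\lambda=0\Rightarrow M$ finite $\Rightarrow \#M^\Gamma=\#M_\Gamma$ is also valid, just slightly longer.
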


\begin{proof}
    This result is \cite[Lemma 4.5]{ray2024analogue}.
\end{proof}

\subsection{The truncated Euler characteristic}

\par When the cohomology groups $ H^i(\Gamma, M) $ are not finite, a generalized version called the truncated Euler characteristic, denoted $ \chi_t(\Gamma, M) $, is used. Given that $ H^1(\Gamma, M) $ is isomorphic to the group of coinvariants $ H_0(\Gamma, M) = M_{\Gamma} $, there exists a natural map 
\[
\Phi_{M}: M^{\Gamma} \rightarrow M_{\Gamma}
\]
which sends $ x \in M^{\Gamma} $ to its residue class in $ M_{\Gamma} $. The truncated Euler characteristic $ \chi_t(\Gamma, M) $ is defined when both the kernel and the cokernel of $ \Phi_{M} $ are finite. It is given by the formula
\[
\chi_t(\Gamma, M) := \frac{\# \operatorname{ker}(\Phi_{M})}{\# \operatorname{cok}(\Phi_{M})}.
\]
It can be shown that if the classical Euler characteristic $ \chi(\Gamma, M) $ is defined, then $ \chi_t(\Gamma, M) $ is also defined, and in fact,
\[
\chi_t(\Gamma, M) = \chi(\Gamma, M).
\]

Let $ r_{M} $ denote the order of vanishing of $ f_M(T) $ at $ T = 0 $. For $ a, b \in \mathbb{Q}_p $, recall that $ a \sim b $ if there exists a unit $ u \in \mathbb{Z}_p^{\times} $ such that $ a = bu $.

\begin{lemma}
\label{lemmazerbes1}
Let $ M $ be a cofinitely generated cotorsion $ \Z_p\llbracket T\rrbracket $-module. Assume that the kernel and cokernel of $ \Phi_{M} $ are finite. Then,
\begin{enumerate}
\item $ r_{M} = \operatorname{corank}_{\mathbb{Z}_p}(M^{\Gamma}) = \operatorname{corank}_{\mathbb{Z}_p}(M_{\Gamma}) $.
\item $ a_{r_{M}} \neq 0 $.
\item $ a_{r_{M}} \sim \chi_t(\Gamma, M) $.
\end{enumerate}
\end{lemma}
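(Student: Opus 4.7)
The plan is to reduce to cyclic elementary modules via the structure theorem and then compute directly. Let $X := M^\vee$, a finitely generated torsion $\Lambda$-module with characteristic series $f_M(T)$. Pontryagin duality exchanges invariants and coinvariants, giving $(M^\Gamma)^\vee \cong X/TX$ and $(M_\Gamma)^\vee \cong X[T]$, and the Pontryagin dual of $\Phi_M$ becomes the composition $X[T] \hookrightarrow X \twoheadrightarrow X/TX$. Note that part (2) follows tautologically from the definition of $r_M$ as the order of vanishing of the nonzero element $f_M$; the content is in parts (1) and (3).

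I would first show that all three claims are invariant under pseudo-isomorphism. The characteristic series $f_M$ is such an invariant by definition. Any finite $\Lambda$-module $F$ satisfies $|F^\Gamma| = |F_\Gamma|$ (from the exact sequence $0 \to F^\Gamma \to F \xrightarrow{T} F \to F_\Gamma \to 0$), so its $\Z_p$-coranks vanish and $\chi_t(\Gamma, F) = 1$. A snake-lemma argument on the six-term $\Gamma$-cohomology exact sequence attached to a short exact sequence of $\Lambda$-modules yields multiplicativity of $\chi_t$ up to units. Applying this to the two short exact sequences extracted from a pseudo-isomorphism $X \to E$ lets us replace $X$ by the elementary form $E = \bigoplus_i \Lambda/(p^{n_i}) \oplus \bigoplus_j \Lambda/(f_j(T))$ from the structure theorem. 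Since all relevant quantities are additive (respectively multiplicative) in direct sums, it further suffices to analyze a single cyclic summand.

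The two cases are direct. For $X = \Lambda/(p^n)$: the element $T$ is a non-zero-divisor, so $X[T] = 0$ and $X/TX \cong \Z_p/(p^n)$, giving $M_\Gamma = 0$, $|M^\Gamma| = p^n$, and $\chi_t = p^n$. The characteristic series is $p^n$, so $r_M = 0$, $a_0 = p^n \sim \chi_t$, and $\operatorname{corank}(M^\Gamma) = 0 = r_M$. For $X = \Lambda/(f)$ with $f$ distinguished, write $f = T^e g$ with $g(0) \neq 0$. Using that $\Lambda/(f)$ is $\Z_p$-free of rank $\deg f$, one computes $X[T] \cong \Z_p$ (generated by $T^{e-1}g \bmod f$) and $X/TX \cong \Z_p$ when $e \geq 1$, while $X[T] = 0$ and $X/TX \cong \Z_p/(f(0))$ when $e = 0$. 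The dual of $\Phi_M$, evaluated on generators, is multiplication by $g(0)$ when $e = 1$ and is zero when $e \geq 2$; the hypothesis that $\Phi_M$ has finite kernel and cokernel therefore rules out $e \geq 2$. For $e = 0$ one recovers the classical formula of Proposition \ref{Prop 3.6}: $r_M = 0$, $a_0 = f(0)$, and $\chi_t = p^{v_p(f(0))} \sim f(0)$. For $e = 1$, writing $f = Tg$: $r_M = 1$, $a_1 = g(0)$, and $\chi_t = p^{v_p(g(0))} \sim g(0)$. In both cases $a_{r_M} \sim \chi_t$, and the $\Z_p$-corank of $M^\Gamma$ equals $r_M$ directly from the structure of $X/TX$.

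I expect the main obstacle to be the multiplicativity of $\chi_t$ under pseudo-isomorphism. This requires a careful snake-lemma argument on the six-term $\Gamma$-cohomology exact sequence, tracking how the connecting homomorphism interacts with the natural inclusion-quotient map $\Phi$. Once this is established, the remaining case analysis on cyclic summands is elementary. A secondary point of care is the convention for the $\Lambda$-action on Pontryagin duals, which may alter identifications by the involution $\gamma \mapsto \gamma^{-1}$ of $\Lambda$; any such modification changes the answer only by a $\Z_p$-unit, which is absorbed by the relation $\sim$ in the statement.
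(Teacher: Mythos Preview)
Your approach is correct and in fact supplies far more detail than the paper does: the paper's proof consists solely of the citation ``see \cite[Lemma 2.11]{zerbes09}'' and gives no argument of its own. The strategy you outline---reduce to an elementary module via the structure theorem, verify that $r_M$, $\operatorname{corank}_{\Z_p}M^\Gamma$, and $\chi_t$ are all invariant under pseudo-isomorphism, and then compute each summand $\Lambda/(p^n)$ and $\Lambda/(f)$ by hand---is exactly the standard route and is essentially what one finds in the cited reference. Your case analysis on the cyclic summands is accurate, including the observation that the hypothesis forces $T^2\nmid f_j$ for every $j$ (which is precisely what the paper records separately as Lemma~\ref{lemma zerbes2}).

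The one step you correctly flag as the main obstacle---transferring the finiteness of $\ker\Phi_M$, $\operatorname{coker}\Phi_M$ and the value of $\chi_t$ across a pseudo-isomorphism---is genuinely where the work lies. Your phrase ``snake-lemma argument on the six-term $\Gamma$-cohomology sequence'' is slightly imprecise, since the maps $\Phi_A,\Phi_B,\Phi_C$ do not sit in a three-term snake diagram with exact rows (the connecting homomorphism intervenes). What actually works is to chase the commutative square relating $\Phi_M$ and $\Phi_{M'}$ through the two short exact sequences extracted from the pseudo-isomorphism, using that the horizontal maps on invariants and coinvariants have finite kernels and cokernels; this yields both the preservation of finiteness and the equality $\chi_t(\Gamma,M)=\chi_t(\Gamma,M')$. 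Once that is in hand, the rest of your argument goes through without change.
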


\begin{proof}
    For the proof, see \cite[Lemma 2.11]{zerbes09}.
\end{proof}

In particular, the classical Euler characteristic $ \chi(\Gamma, M) $ is defined if and only if $ r_{M} = 0 $. Next, we give a criterion for the truncated Euler characteristic to be defined. 

\begin{lemma}\label{lemma zerbes2}
    Let $M$ be a cofinitely generated and cotorsion $\Z_p\llbracket T\rrbracket$-module. Let $f_1(T), \dots, f_t(T)$ be distinguished polynomials in \eqref{structure isomorphism}. Assume that $T^2\nmid f_j$ for all $j=1, \dots, t$. Then the truncated Euler characteristic $\chi_t(\Gamma, M)$ is well defined. In particular, $\chi_t(\Gamma, M)$ is defined when $r_M\leq 1$. 
\end{lemma}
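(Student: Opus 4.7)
The plan is to reduce the statement to the case where $M$ is the Pontryagin dual of an elementary module and then analyze each cyclic summand separately. More precisely, by the structure theorem \eqref{structure isomorphism} there is a pseudo-isomorphism $\phi\colon M'\to M$ where $(M')^\vee$ is a direct sum of cyclic torsion $\Lambda$-modules of the form $\Z_p\llbracket T\rrbracket/(p^{n_i})$ or $\Z_p\llbracket T\rrbracket/(f_j(T))$. The kernel and cokernel of $\phi$ are finite, so a standard snake-lemma argument comparing the rows of
\[
\begin{tikzcd}[column sep = small]
0 \arrow{r}& (M')^\Gamma \arrow{r}\arrow{d}{\Phi_{M'}} & M' \arrow{r}{T}\arrow{d} & M' \arrow{r}\arrow{d} & (M')_\Gamma \arrow{r}\arrow{d}{} & 0\\
0 \arrow{r}& M^\Gamma \arrow{r} & M \arrow{r}{T} & M \arrow{r} & M_\Gamma \arrow{r} & 0
\end{tikzcd}
\]
shows that $\Phi_M$ has finite kernel and cokernel if and only if $\Phi_{M'}$ does. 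Moreover, since $\Gamma$-invariants and coinvariants commute with finite direct sums, $\Phi_{M'}$ splits as the direct sum of the maps $\Phi_{M_k}$ attached to each cyclic summand $M_k$ of $M'$, so it suffices to verify the conclusion on each such summand.

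For $M_k=(\Z_p\llbracket T\rrbracket/(p^{n_i}))^\vee$ I compute directly that $M_k^\Gamma\simeq \Z/p^{n_i}\Z$ and $(M_k)_\Gamma=0$, so both are finite and there is nothing to check. For $M_k=(\Z_p\llbracket T\rrbracket/(f_j))^\vee$ with $T\nmid f_j$, the image of $T$ in $\Z_p\llbracket T\rrbracket/(f_j)$ is a nonzerodivisor and $f_j(0)\neq 0$, whence $M_k^\Gamma\simeq \Z_p/(f_j(0))$ and $(M_k)_\Gamma=0$ are again finite. In these cases $\Phi_{M_k}$ trivially has finite kernel and cokernel.

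The delicate case, which is the main obstacle, is when $T\parallel f_j$. Writing $f_j=T\cdot h_j$ with $T\nmid h_j$, the distinguished hypothesis on $f_j$ together with $T^2\nmid f_j$ forces $h_j(0)\in p\Z_p\setminus\{0\}$. In this case $M_k^\Gamma$ and $(M_k)_\Gamma$ are both isomorphic to $\Q_p/\Z_p$, so the finiteness statement is nontrivial. To prove it, I pass to the Pontryagin dual: applying $(-)^\vee$ to the composition $M_k^\Gamma\hookrightarrow M_k\twoheadrightarrow (M_k)_\Gamma$ yields precisely the analogous map $\Phi_{N_k}\colon N_k^\Gamma\to (N_k)_\Gamma$ for $N_k=\Z_p\llbracket T\rrbracket/(Th_j)$. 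A direct inspection identifies $N_k^\Gamma=N_k[T]=h_j\cdot\Z_p$ and $(N_k)_\Gamma=\Z_p\llbracket T\rrbracket/(T)=\Z_p$, both free $\Z_p$-modules of rank one, and the map is multiplication by $h_j(0)$, hence injective with finite cokernel $\Z_p/(h_j(0))$. Pontryagin dualizing back shows that $\Phi_{M_k}$ has finite kernel (isomorphic to $\Z_p/(h_j(0))$) and zero cokernel, completing this case.

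Finally, the ``in particular'' assertion follows since the constant factors $p^{n_i}$ do not contribute to the order of vanishing of $f_M(T)$ at $T=0$, giving $r_M=\sum_j v_T(f_j)$; the inequality $r_M\leq 1$ then forces $v_T(f_j)\leq 1$ for every $j$, so $T^2\nmid f_j$ and the main statement applies.
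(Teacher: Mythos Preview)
Your argument is correct and supplies the details that the paper omits; the paper itself merely refers the reader to the proof of \cite[Lemma~2.11]{zerbes09}, whereas you carry out the standard reduction to cyclic summands explicitly and compute $\Phi$ on each by passing to the dual. One small inaccuracy: you claim the distinguished hypothesis on $f_j$ forces $h_j(0)\in p\Z_p\setminus\{0\}$, but this fails when $f_j=T$ (then $h_j=1$ and $h_j(0)=1$). What you actually need---and what follows directly from $T\nmid h_j$---is only that $h_j(0)\neq 0$, and your subsequent computation of $\Phi_{N_k}$ as multiplication by $h_j(0)$ on $\Z_p$ uses nothing more, so the proof stands unchanged.
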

\begin{proof}
    The assertion follows from the proof of \cite[Lemma 2.11]{zerbes09}.
\end{proof}

\begin{lemma}\label{TEC is well defined}
    Let $M$ be a cofinitely generated and cotorsion $\Z_p\llbracket T\rrbracket$-module for which $\op{corank}_{\Z_p}(M^\Gamma)= 1$. Then, one has that $r_M=1$. Moreover, the truncated Euler characteristic is defined and 
    \[\chi_t(\Gamma, M)\sim a_1.\]
\end{lemma}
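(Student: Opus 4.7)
The plan is to combine the structure theorem \eqref{structure isomorphism} with Lemmas \ref{lemmazerbes1} and \ref{lemma zerbes2}. The strategy is to show that the hypothesis $\op{corank}_{\Z_p}(M^\Gamma) = 1$ constrains the elementary module associated to $M^\vee$ enough to force $r_M = 1$, at which point the remaining conclusions follow from the two lemmas.

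First, I would pass to a pseudo-isomorphism putting $M^\vee$ in the elementary form
\[
M^\vee \sim \bigoplus_i \Z_p\llbracket T\rrbracket/(p^{n_i}) \;\oplus\; \bigoplus_j \Z_p\llbracket T\rrbracket/(f_j),
\]
with the $f_j$ distinguished. Under pseudo-isomorphism, the quantities $\op{corank}_{\Z_p}(M^\Gamma)$, $\op{corank}_{\Z_p}(M_\Gamma)$, and the characteristic series $f_M(T)$ are all unchanged, so it suffices to work with the elementary module. A direct calculation on each cyclic summand shows that $\Z_p\llbracket T\rrbracket/(p^{n_i})$ contributes $0$ to $\op{corank}_{\Z_p}(M^\Gamma)$ (it is finite modulo $T$ and has no $T$-torsion), while $\Z_p\llbracket T\rrbracket/(f_j)$ contributes $0$ if $T\nmid f_j$ and $1$ if $T\mid f_j$. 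The hypothesis $\op{corank}_{\Z_p}(M^\Gamma)=1$ therefore singles out a unique distinguished polynomial $f_{j_0}$ satisfying $T\mid f_{j_0}$, with $T\nmid f_j$ for every other index $j$.

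The key step, which I expect to be the main obstacle, is to upgrade $T\mid f_{j_0}$ to $T^2\nmid f_{j_0}$. Indeed, if $f_{j_0}$ were divisible by $T^2$, then on the elementary module the $\Z_p$-divisible parts of $M^\Gamma$ and $M_\Gamma$ would still each be one-dimensional but the map induced by $\Phi_M$ between them would vanish, so $\Phi_M$ would have infinite kernel and cokernel and the truncated Euler characteristic would not be defined. One must therefore use the full content of the hypothesis, comparing the $\Z_p$-divisible parts of invariants and coinvariants via $\Phi_M$, to rule out this case. Once $T^2\nmid f_{j_0}$ is established, summing orders of vanishing at $T=0$ across all factors yields $r_M = \op{ord}_{T=0}(f_{j_0}) = 1$, and Lemma \ref{lemma zerbes2} ensures that $\chi_t(\Gamma, M)$ is well-defined. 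Finally, Lemma \ref{lemmazerbes1}(2)--(3) gives $a_1\ne 0$ and $\chi_t(\Gamma, M)\sim a_{r_M}=a_1$, as required.
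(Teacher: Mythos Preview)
You have correctly located the crux, but the obstacle you flag cannot be overcome from the stated hypothesis. Consider $M$ with $M^\vee = \Lambda/(T^2)$: then $M^\vee/TM^\vee \cong \Z_p$, so $\op{corank}_{\Z_p}(M^\Gamma) = 1$, yet $f_M(T) = T^2$ gives $r_M = 2$, and (exactly as you observe) $\Phi_M$ vanishes on the divisible part, so $\chi_t(\Gamma,M)$ is undefined. The hypothesis $\op{corank}_{\Z_p}(M^\Gamma)=1$ only counts how many of the $f_j$ are divisible by $T$, not the multiplicity of $T$ in any single $f_j$; there is no further content in the hypothesis left to exploit, and your sentence ``one must therefore use the full content of the hypothesis \dots\ to rule out this case'' promises something that is simply not available.

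The paper's one-line proof just invokes Lemmas~\ref{lemmazerbes1} and~\ref{lemma zerbes2} and shares the same circularity: Lemma~\ref{lemmazerbes1}(1) needs finiteness of $\op{ker}\Phi_M$ and $\op{cok}\Phi_M$ as \emph{input} to conclude $r_M = \op{corank}_{\Z_p}(M^\Gamma)$, while the last sentence of Lemma~\ref{lemma zerbes2} needs $r_M \le 1$ as input to produce that finiteness. So neither your outline nor the paper's citation closes the loop for an abstract $M$. In the paper's intended application to Selmer groups, the Perrin-Riou--Schneider theory supplies the missing ingredient via the $p$-adic height pairing, so the downstream results are not in danger; but for the lemma as stated about arbitrary modules, the correct repair is to add the hypothesis $r_M \le 1$ (equivalently $T^2\nmid f_j$ for all $j$), after which your argument and the paper's citation of the two lemmas coincide.
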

\begin{proof}
    The result follows from Lemmas \ref{lemmazerbes1} and \ref{lemma zerbes2}.
\end{proof}

\subsection{A formula for the truncated Euler characteristic}
\par Let $E$ be an elliptic curve over a number field $F$. Assume that $E$ has good ordinary reduction at the primes of $F$ that lie above $p$. We set $\Gamma:=\op{Gal}(F_{\op{cyc}}/F)$. Assume that the Selmer group $\op{Sel}_{p^\infty}(E/F_{\op{cyc}})$ is cofinitely generated and cotorsion as a $\Lambda_{\op{cyc}}$-module. Note that this is indeed true when $F=\Q$, or if $F$ is an abelian extension of $\Q$ and $E$ is defined over $\Q$. Assume that the $\Z_p$-corank of $\op{Sel}_{p^\infty}(E/F)$ is $1$. Consider the natural map
\[\Phi: \op{Sel}_{p^\infty} (E/F_{\op{cyc}})^\Gamma\rightarrow \op{Sel}_{p^\infty} (E/F_{\op{cyc}})_{\Gamma}\]
sending an element $x\in \op{Sel}_{p^\infty} (E/F_{\op{cyc}})^\Gamma$ to its residue class $\Phi(x)=\bar{x}\in \op{Sel}_{p^\infty} (E/F_{\op{cyc}})_{\Gamma}$. It follows from Mazur's control theorem that $\op{corank}_{\Z_p}\op{Sel}_{p^\infty}(E/F_{\op{cyc}})^{\Gamma_{\op{cyc}}}=1$.
The truncated Euler characteristic $\chi_t(\Gamma, E[p^\infty])$ is defined as the following quotient
\[\chi_t(\Gamma, E[p^\infty]):=\frac{\# \op{ker}\Phi}{\#\op{cok} \Phi}.\] 
Note that by Lemma \ref{TEC is well defined}, $\op{ker}\Phi$ and $\op{cok}\Phi$ are finite, and thus the truncated Euler characteristic is well defined.

\par Let $f(T)$ represent the characteristic element of $\operatorname{Sel}_{p^\infty}(E/F_{\operatorname{cyc}})$. Let $r\in \mathbb{Z}_{\geq 0}$ be the order of vanishing of $f(T)$ at $T=0$, and express $f(T)$ as $T^r \cdot g(T)$. According to Lemma \ref{TEC is well defined}, $r$ is equal to 1. Note that $g(0) \neq 0$. The $p$-adic regulator of $E$ over $F$, denoted as $\operatorname{Reg}_p(E/F)$, is defined as the determinant of the $p$-adic height pairing (see \cite{schneiderhp1}). It is conjectured that the $p$-adic regulator is non-zero (see \cite{Schneider85}). Let $\mathcal{R}_p(E/F)$ represent the normalized regulator, defined as $\mathcal{R}_p(E/F) := \operatorname{Reg}_p(E/F)/p$. Given a prime $v$ of $K$, denote by $c_v(E/F)$ the Tamagawa number of $E$ over $F_v$. Set $\kappa_v$ to denote the residue field at $v$. Suppose $E$ has good reduction at $v$, set $\widetilde{E}(\kappa_v)$ to be the group of $\kappa_v$-rational points of the reduction of a regular model of $E$ at the prime $v$. 
\par The following result gives the formula for the truncated Euler characteristic of the $p$-primary Selmer group when it is defined.
 In the CM case, this was proven by B. Perrin-Riou (see \cite{PR82}) and in the general case by P. Schneider (see \cite{Schneider85}).
\begin{theorem}[Perrin-Riou and Schneider]\label{th ECF}
With respect to notation above, assume that $E(F)[p]=0$. Then, we have that
\[\chi_t(\Gamma, E[p^\infty])\sim \mathcal{R}_p(E/F) \times \# \Sh(E/F)[p^\infty] \times \prod_{v\nmid p} c_{v}(E/F) \times \left(\prod_{v|p}\# \widetilde{E}(\kappa_v)[p^\infty]\right)^2.\]
\end{theorem}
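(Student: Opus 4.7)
The plan is to follow the strategy of Perrin-Riou (CM case) and Schneider (general case), computing $\chi_t(\Gamma, E[p^\infty])$ directly from its definition as $\#\ker\Phi / \#\op{cok}\Phi$ via a descent analysis coupled with the $p$-adic height pairing. Throughout, $E[p^\infty]$ here is shorthand for $\op{Sel}_{p^\infty}(E/F_{\op{cyc}})$, and all equalities will be asserted up to $p$-adic units.

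First, I would apply Mazur's fundamental diagram at level $n = 0$ to relate $\op{Sel}_{p^\infty}(E/F)$ to $\op{Sel}_{p^\infty}(E/F_{\op{cyc}})^\Gamma$ via the restriction $\alpha_0$. The snake lemma expresses the finite kernel and cokernel of $\alpha_0$ in terms of local data $\ker\gamma_v$ and $\op{cok}\gamma_v$ for $v \in S$. A dual analysis, via Pontryagin duality applied to the Poitou-Tate nine-term sequence, provides a parallel description of the coinvariants $\op{Sel}_{p^\infty}(E/F_{\op{cyc}})_\Gamma$ in terms of a dualized Selmer group over $F$ and local coinvariant contributions. This lets one factor $\Phi$ as a composite whose kernel and cokernel decompose into explicit local and global pieces.

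Second, I would compute the local contributions. For $v \nmid p$, a standard local Iwasawa-theoretic computation identifies $\#\ker \gamma_v$ with the $p$-part of $c_v(E/F)$, while $\op{cok}\gamma_v$ is trivial, producing the factor $\prod_{v \nmid p} c_v(E/F)$. For $v \mid p$, the good ordinary reduction hypothesis permits us to replace $H^1(F_v, E)[p^\infty]$ with cohomology of the Greenberg-filtration quotient $\widetilde{E}[p^\infty]$, and the analogous kernel/cokernel computations contribute a factor $\#\widetilde{E}(\kappa_v)[p^\infty]$ on each side of $\Phi$, which is what explains the square in the final formula.

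Third, for the global regulator and Shafarevich-Tate contributions, I would invoke the descent exact sequence
\[0 \to E(F) \otimes \Q_p/\Z_p \to \op{Sel}_{p^\infty}(E/F) \to \Sh(E/F)[p^\infty] \to 0.\]
Under the corank-one hypothesis, this separates $\op{Sel}_{p^\infty}(E/F)$ into the finite group $\Sh(E/F)[p^\infty]$ and a corank-one divisible part arising from $E(F) \otimes \Q_p/\Z_p$. Cassels-Tate duality ensures that $\Sh(E/F)[p^\infty]$ contributes exactly $\#\Sh(E/F)[p^\infty]$ to $\ker\Phi$, while the induced map on the divisible part is identified with the $p$-adic height pairing on $E(F) \otimes \Z_p$; its determinant, after the canonical rescaling by $p$ coming from the identification of $\Gamma$ with $p\Z_p$ via the cyclotomic character, becomes $\mathcal{R}_p(E/F)$. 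The main obstacle lies precisely in this identification of the descent pairing with the $p$-adic height pairing --- the technical heart of Schneider's argument --- and in the bookkeeping required to verify that no spurious $p$-units intrude. The hypothesis $E(F)[p] = 0$ is used to eliminate global torsion obstructions and to justify the clean form of Mazur's control theorem at the base level. Assembling the local and global factors then yields the formula.
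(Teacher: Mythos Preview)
The paper does not actually prove this theorem: it is stated as a known result and attributed to Perrin-Riou \cite{PR82} (CM case) and Schneider \cite{Schneider85} (general case), with no argument given beyond these citations. So there is no ``paper's own proof'' to compare your proposal against.

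That said, your sketch is a faithful outline of the standard descent argument in the literature. The three-step structure --- Mazur's fundamental diagram to access $\op{Sel}_{p^\infty}(E/F_{\op{cyc}})^\Gamma$ and its dual for the coinvariants, the local computation splitting into Tamagawa contributions at $v\nmid p$ and $\#\widetilde{E}(\kappa_v)[p^\infty]$ factors at $v\mid p$, and the identification of the induced map on the divisible part of the Selmer group with the $p$-adic height pairing --- is exactly the Perrin-Riou/Schneider strategy. You correctly flag that the hard step is Schneider's identification of the descent pairing with the $p$-adic height pairing, and that the hypothesis $E(F)[p]=0$ is what keeps the control-theorem kernels and cokernels under control. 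As a sketch this is fine; turning it into a complete proof would of course require reproducing a substantial portion of \cite{Schneider85}, which is presumably why the paper simply cites the result.
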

 The following result gives a relationship between the Iwasawa invariants and the (truncated) Euler characteristic.
\begin{corollary}\label{cor ECF mulambda}
With respect to notation above, the following conditions are equivalent.
\begin{enumerate}
    \item The Iwasawa invariants of $\op{Sel}_{p^\infty}(E/F_{\op{cyc}})$ are given by \[\mu\left(\op{Sel}_{p^\infty}(E/F_{\op{cyc}})\right)=0\text{ and }\lambda\left(\op{Sel}_{p^\infty}(E/F_{\op{cyc}})\right)=1;\]
    \item $\chi_t(\Gamma, E[p^\infty])=1$;
    \item the following are all satisfied:
    \begin{itemize}
        \item $p\nmid \mathcal{R}_p(E/F)$,
        \item $\# \Sh(E/F)[p^\infty]=0$,
        \item $p\nmid c_v(E/F)$ for all primes $v\nmid p$ of $K$,
        \item and $\widetilde{E}(\kappa_v)[p^\infty]=0$ for all primes $v|p$ of $K$.
    \end{itemize}   
\end{enumerate}
\end{corollary}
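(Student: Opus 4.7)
The plan is to establish the two equivalences $(1)\Leftrightarrow(2)$ and $(2)\Leftrightarrow(3)$ separately, each by direct appeal to results already developed in Section \ref{s 3}.

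For $(1)\Leftrightarrow(2)$, I would first note that the hypothesis $\op{corank}_{\Z_p}\op{Sel}_{p^\infty}(E/F)=1$, combined with Mazur's control theorem, forces $\op{corank}_{\Z_p} M^{\Gamma}=1$, where $M:=\op{Sel}_{p^\infty}(E/F_{\op{cyc}})$. Lemma \ref{TEC is well defined} then yields $r_M=1$ and $\chi_t(\Gamma, E[p^\infty])\sim a_1$, where $a_1$ is the coefficient of $T$ in the characteristic series $f_M(T)$. Writing $f_M(T)=p^\mu F(T)$ with $F$ monic distinguished of degree $\lambda$, the condition $r_M=1$ forces the constant term $F(0)$ to vanish. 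If $\mu=0$ and $\lambda=1$, then $F(T)=T$ and $a_1=1\in\Z_p^\times$; conversely, if $\mu\geq 1$ then $a_1$ carries an overall factor of $p^\mu$, while if $\lambda\geq 2$ then $a_1$ equals a non-leading coefficient of the distinguished polynomial $F$ and is therefore divisible by $p$. Thus $a_1\in\Z_p^\times$ if and only if $\mu=0$ and $\lambda=1$, which is exactly $(1)\Leftrightarrow(2)$.

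For $(2)\Leftrightarrow(3)$, I would invoke the Perrin-Riou--Schneider formula (Theorem \ref{th ECF}) to write
\[
\chi_t(\Gamma, E[p^\infty])\sim \mathcal{R}_p(E/F)\cdot \#\Sh(E/F)[p^\infty]\cdot \prod_{v\nmid p}c_v(E/F)\cdot \Big(\prod_{v\mid p}\#\widetilde{E}(\kappa_v)[p^\infty]\Big)^2.
\]
Each factor is a nonzero $p$-adic integer. The $\Sh$ and $\widetilde{E}(\kappa_v)$ contributions are orders of $p$-primary abelian groups and hence pure powers of $p$; they are $p$-adic units precisely when the respective groups are trivial. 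The Tamagawa numbers $c_v(E/F)$ are positive integers and are units modulo $p$ exactly when $p\nmid c_v(E/F)$, while the regulator factor is a unit if and only if $p\nmid \mathcal{R}_p(E/F)$. Consequently the product is a $p$-adic unit if and only if the four bullet points of (3) hold simultaneously, giving the desired equivalence.

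No serious obstacle arises: the whole argument is essentially formal given Lemma \ref{TEC is well defined} and Theorem \ref{th ECF}. The most delicate bookkeeping is the analysis of $a_1$ in the first equivalence, which uses only that each non-leading coefficient of a distinguished polynomial is divisible by $p$.
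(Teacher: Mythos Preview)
Your proposal is correct and follows essentially the same approach as the paper. The paper's proof of $(1)\Leftrightarrow(2)$ writes $f(T)=Tg(T)$ and observes that $\mu=0$, $\lambda=1$ is equivalent to $g(T)$ being a unit, which in turn is equivalent to $g(0)\sim\chi_t(\Gamma,E[p^\infty])$ being a unit; your factorization $f_M(T)=p^\mu F(T)$ with $F$ distinguished is just a repackaging of the same computation. For $(2)\Leftrightarrow(3)$ both you and the paper appeal directly to Theorem~\ref{th ECF}, with your version spelling out why each factor in the Perrin-Riou--Schneider formula is a $p$-adic unit precisely under the bullet conditions.
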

\begin{proof}
Recall that $f(T)=T g(T)$. Thus, $\mu=0$ and $\lambda=1$ if and only if $g(T)$ is a unit in $\Z_p\llbracket T\rrbracket$. On the other hand, $\chi_t(\Gamma, E[p^\infty])\sim g(0)$ (cf. Lemma \ref{lemmazerbes1}, part (3)). Thus, $\mu=0$ and $\lambda=1$ if and only if $\chi_t(\Gamma, E[p^\infty])=1$. This proves that (1) and (2) are equivalent. The equivalence of (2) and (3) follows from Theorem \ref{th ECF}.
\end{proof}

\section{Hilbert's tenth problem in towers of number fields}\label{s 4}

\par In this section, we apply ideas from Iwasawa theory to prove Hilbert's tenth problem in the finite layers of various $\Z_p$-extensions of interest. Our results require that there exist elliptic curves for which certain additional conditions are satisfied. We shall present examples to illustrate our results. The main results of this section are Theorems \ref{main thm 1}, \ref{main thm 2} and \ref{main thm 3}. In the next section, we shall show that our conditions are indeed satisfied for a positive density set of imaginary quadratic fields, thus showing that our results indeed apply for a large number of cases. Such distribution questions are natural in arithmetic statistics and are obtained via an application of the results of Kriz--Li \cite{kriz-li} and Bhargava et. al. \cite{BKOS}.

\par As in the previous section, fix an elliptic curve $E_{/\Q}$ and an odd prime number $p$. We shall let $K=K_d=\Q(\sqrt{d})$ be an imaginary quadratic field. Here, $d$ is a negative squarefree integer. Denote the twist of $E$ over $K$ by $E^{(K)}$, or equivalently by $E^{(d)}$. Recall that Proposition \ref{prop:selmer-decomposition-ord} asserts that $p$-primary Selmer group over $K_{\op{cyc}}$ decomposes as
\[\op{Sel}_{p^\infty}(E/K_{\op{cyc}})=\op{Sel}_{p^\infty}(E/\Q_{\op{cyc}})\oplus \op{Sel}_{p^\infty}(E^{(K)}/\Q_{\op{cyc}}).\]

\begin{ass}
    We shall assume throughout that 
    \begin{enumerate}
        \item $E$ and $E^{(K)}$ have good ordinary reduction at $p$;
        \item $\op{corank}_{\Z_p} \op{Sel}_{p^\infty}(E/\Q)=1$ and  $\op{corank}_{\Z_p} \op{Sel}_{p^\infty}(E/\Q)=0$;
        \item $E(K)[p]=0$.
    \end{enumerate}

\end{ass}
It follows from the structure theory of $\Lambda_{\op{cyc}}$-modules that $\mu_p(E/K_{\op{cyc}})=0$ and $\lambda_p(E/K_{\op{cyc}})=1$ if and only if as a $\Z_p$-module,
\[\op{Sel}_{p^\infty}(E/K_{\op{cyc}})\simeq \left(\Q_p/\Z_p\right)\oplus A,\] for some finite abelian group $A$. Let $(a,b)\in \mathbb{P}^1(\Z_p)$ and $K_{a,b}$ the associated $\Z_p$-extension of $K$. Let $K_{a,b}^{(n)}$ be the $n$-th layer of $K_{a,b}/K$, i.e., $K_{a,b}^{(n)}$ is the subextension for which $[K_{a,b}^{(n)}:K]=p^n$. Identify the Iwasawa algebra $\Lambda_{a,b}$ with a formal power series ring $\Z_p\llbracket T\rrbracket$, with respect to some choice of topological generator of $\Gamma_{a,b}$. Denote by $f_{a,b}(T)$ the characteristic element associated with $\op{Sel}_{p^\infty}(E/K_{a,b})$, with the understanding that $f_{a,b}(T)$ is $0$ if $\op{Sel}_{p^\infty}(E/K_{a,b})$ is not cotorsion over $\Lambda_{a,b}$. Set $\mu_p(E/K_{a,b})$ (resp. $\lambda_p(E/K_{a,b})$) the $\mu$-invariant (resp. $\lambda$-invariant) associated to $\op{Sel}_{p^\infty}(E/K_{a,b})$. 

\begin{lemma}\label{lemma V^G=0 implies V=0}    Consider a finite dimensional $\F_p$-vector space $V$ and the finite $p$-group $G$ acting on $V$. If $V^G=0$, then it follows that $V=0$.\end{lemma}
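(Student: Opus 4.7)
The plan is to argue by contrapositive: assuming $V \neq 0$, I will show that $V^G \neq 0$, which is the standard fixed point theorem for $p$-groups acting on $p$-sets.

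First I would note that since $V$ is a finite dimensional $\mathbb{F}_p$-vector space, the underlying set $V$ is finite of cardinality $|V| = p^{\dim_{\mathbb{F}_p} V}$, which is a power of $p$. If $V \neq 0$, then $p \mid |V|$. Now decompose $V$ into $G$-orbits under the given action. Each orbit has cardinality $[G : \mathrm{Stab}_G(v)]$, which divides $|G|$; since $G$ is a finite $p$-group, every orbit size is a power of $p$.

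Next I would separate the orbits of size $1$ (which form exactly the fixed subspace $V^G$) from those of size $\geq p$, and write
\[
|V| \;=\; |V^G| \;+\; \sum_{\mathcal{O} \text{ non-trivial}} |\mathcal{O}|.
\]
Each non-trivial orbit $\mathcal{O}$ has $|\mathcal{O}|$ divisible by $p$, so reducing modulo $p$ yields $|V^G| \equiv |V| \equiv 0 \pmod{p}$. Since $0 \in V^G$, we have $|V^G| \geq 1$, and combined with $p \mid |V^G|$ this forces $|V^G| \geq p$, hence $V^G \neq 0$.

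This proves the contrapositive, so the assumption $V^G = 0$ forces $V = 0$. There is no real obstacle; the only point to emphasize is that the action being $\mathbb{F}_p$-linear is not needed for the combinatorial argument, only that $V$ is a finite $p$-set acted on by a finite $p$-group, which is guaranteed by the hypothesis that $V$ is a finite dimensional $\mathbb{F}_p$-vector space.
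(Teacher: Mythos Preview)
Your proof is correct and is essentially the same orbit-counting argument the paper gives: the paper also decomposes $V$ into $G$-orbits, notes that all orbits have $p$-power size, and compares $\#V$ modulo $p$ with the contribution of the fixed set. The only cosmetic difference is that you phrase it contrapositively while the paper argues directly (and remarks that Nakayama's lemma would also work); the underlying idea is identical.
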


\begin{proof}
The result follows from Nakayama's lemma, but we give a more elementary proof. Consider the relation $\# V = \sum_i \# \mathcal{O}_i$, where $\mathcal{O}_i$ runs through the $G$-orbits of $V$. Since it is assumed that $V^G = 0$, it follows that $\{0\}$ is a $G$-orbit consisting of just one element, and all other $G$-orbits have sizes that are powers of $p$. This implies in particular that $\# V\equiv 1\mod{p}$. Therefore, $\#V=1$, i.e., $V=0$.
\end{proof}

\begin{lemma}\label{trivial lemma}
    Assume that $E(K)[p^\infty]=0$. Then, it follows that $E(K_{a,b})[p^\infty]=0$. 
\end{lemma}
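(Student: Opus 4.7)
The plan is to reduce the statement to showing that the $p$-torsion $E(K_{a,b})[p]$ vanishes, and then to exploit the fact that $K_{a,b}/K$ is a pro-$p$ extension so that Lemma~\ref{lemma V^G=0 implies V=0} applies at each finite layer.

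First I would observe that since any $p$-power torsion point of order $p^k$ yields a $p$-torsion point after multiplication by $p^{k-1}$, the vanishing of $E(K_{a,b})[p^\infty]$ is equivalent to the vanishing of $E(K_{a,b})[p]$. Next, I would use that
\[
E(K_{a,b})[p] = \bigcup_{n \geq 0} E(K_{a,b}^{(n)})[p],
\]
so it suffices to prove $E(K_{a,b}^{(n)})[p] = 0$ for every $n$.

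I would then proceed by induction on $n$. The base case $n = 0$ follows directly from the hypothesis $E(K)[p^\infty] = 0$, which in particular forces $E(K)[p] = 0$. For the inductive step, set $G_n := \mathrm{Gal}(K_{a,b}^{(n)}/K)$, which is a finite $p$-group since $K_{a,b}/K$ is a $\Z_p$-extension. The finite-dimensional $\F_p$-vector space $V := E(K_{a,b}^{(n)})[p] \subseteq E[p]$ carries a $G_n$-action, and by Galois descent
\[
V^{G_n} = E(K_{a,b}^{(n)})[p]^{G_n} = E(K)[p] = 0.
\]
Applying Lemma~\ref{lemma V^G=0 implies V=0} to $V$ and $G_n$ yields $V = 0$, completing the induction.

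I do not anticipate any substantial obstacle; the argument is entirely formal once one recognizes that a finite $p$-group acting nontrivially on a nonzero $\F_p$-vector space must have a nonzero fixed subspace, which is exactly the content of Lemma~\ref{lemma V^G=0 implies V=0} already established in the paper.
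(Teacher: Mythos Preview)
Your proof is correct and follows essentially the same approach as the paper: reduce to the finite layers $K_{a,b}^{(n)}$, set $V = E(K_{a,b}^{(n)})[p]$, observe $V^{G_n} = E(K)[p] = 0$, and apply Lemma~\ref{lemma V^G=0 implies V=0}. The only cosmetic difference is that you frame the passage over $n$ as an induction, though the inductive hypothesis is never actually invoked.
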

\begin{proof}
    It suffices to show that for all $n$, $E(K_{a,b}^{(n)})[p^\infty]=0$. Let $\Gamma_n:=\op{Gal}(K_{a,b}^{(n)}/K)$. We set $V:=E(K_{a,b}^{(n)})[p]$ and note that 
    \[V^{\Gamma_n}=E(K)[p]=0.\] Lemma \ref{lemma V^G=0 implies V=0} then implies that $V=0$. Since $E(K_{a,b}^{(n)})[p]=0$, it is clear that $E(K_{a,b}^{(n)})[p^\infty]=0$. This concludes the proof.
\end{proof}

\begin{proposition}\label{propn on H10 for finite layers}Let $K_{a, b}$ be a $\Z_p$-extension of $K$ and assume that 
    \begin{enumerate}
        \item $\op{Sel}_{p^\infty}(E/K_{a,b})$ is cotorsion as a $\Lambda_{a,b}$-module;
        \item $\mu_p(E/K_{a,b})=0$ and $\lambda_p(E/K_{a,b})=1$.
        \end{enumerate}
        For all $n\in \Z_{\geq 0}$, the extension $K_{a,b}^{(n)}/\Q$ is integrally diophantine. Thus Conjecture \ref{denef lipchitz conjecture} is satisfied for $K_{a,b}^{(n)}$. In particular, Hilbert's tenth problem has a negative answer for $K_{a,b}^{(n)}$.
\end{proposition}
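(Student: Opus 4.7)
The plan is to invoke Shlapentokh's criterion (Theorem~\ref{shlap 1}) with $E$ itself as the witness elliptic curve, after reducing to the relative problem over $K$. Since $K$ is abelian over $\Q$, Shapiro's result (recalled in the introduction) yields that $K/\Q$ is integrally diophantine; by transitivity of integrally diophantine extensions, it therefore suffices to establish that $K_{a,b}^{(n)}/K$ is integrally diophantine for each $n \geq 0$. By Shlapentokh's criterion this reduces to the rank equality
\[
\op{rank}_\Z E(K_{a,b}^{(n)}) \;=\; \op{rank}_\Z E(K) \;>\; 0.
\]

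The crux is the upper bound $\op{rank}_\Z E(K_{a,b}^{(n)}) \leq 1$ for every $n$. From hypotheses (1) and (2), Lemma~\ref{basic lemma on lambda and corank} gives $\op{Sel}_{p^\infty}(E/K_{a,b}) \simeq \Q_p/\Z_p \oplus A$ with $A$ finite, so this Selmer group has $\Z_p$-corank $1$. Since $\lambda=1$, the characteristic ideal of its Pontryagin dual is generated (up to a unit) by $T$; hence $T$, and more generally the augmentation-ideal element $(1+T)^{p^n}-1$, annihilates the dual modulo a finite pseudo-null piece. Dualising back, $\op{Sel}_{p^\infty}(E/K_{a,b})^{\Gamma_n}$ retains $\Z_p$-corank $1$ for every $n$. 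I would next apply Mazur's control theorem to the $\Z_p$-extension $K_{a,b}/K$ (valid in this generality since we have assumed the Selmer group to be $\Lambda_{a,b}$-cotorsion): the restriction
\[
\alpha_n\colon \op{Sel}_{p^\infty}(E/K_{a,b}^{(n)}) \longrightarrow \op{Sel}_{p^\infty}(E/K_{a,b})^{\Gamma_n}
\]
has finite kernel and cokernel of orders bounded uniformly in $n$, so that $\op{corank}_{\Z_p} \op{Sel}_{p^\infty}(E/K_{a,b}^{(n)}) = 1$. Combined with Lemma~\ref{trivial lemma} (which gives $E(K_{a,b}^{(n)})[p^\infty] = 0$) and the Kummer exact sequence
\[
0 \to E(K_{a,b}^{(n)}) \otimes \Q_p/\Z_p \to \op{Sel}_{p^\infty}(E/K_{a,b}^{(n)}) \to \Sh(E/K_{a,b}^{(n)})[p^\infty] \to 0,
\]
this forces $\op{rank}_\Z E(K_{a,b}^{(n)}) \leq 1$.

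Finally, I need $\op{rank}_\Z E(K) \geq 1$. Proposition~\ref{prop:selmer-decomposition-ord} combined with the ambient hypotheses yields $\op{corank}_{\Z_p}\op{Sel}_{p^\infty}(E/K) = 1$. Under the standing regularity assumptions of the paper---most notably that the $p$-adic regulator of $E$ over $\Q$ is a $p$-adic unit, which via Corollary~\ref{cor ECF mulambda} forces $\Sh(E/\Q)[p^\infty]$ to be trivial---one concludes $\op{rank}_\Z E(\Q) = 1$, and hence $\op{rank}_\Z E(K) \geq 1$. Together with the upper bound above this yields $\op{rank}_\Z E(K_{a,b}^{(n)}) = \op{rank}_\Z E(K) = 1$ throughout the tower, and Shlapentokh's criterion applies to conclude. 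The main subtlety lies in the Iwasawa-theoretic step: although $\lambda=1$ pins down the characteristic ideal, the finite pseudo-null summand $A$ can in principle perturb invariants and coinvariants under $\Gamma_n$, and it is Mazur's control theorem that absorbs these perturbations into a uniformly bounded finite error, preserving the $\Z_p$-corank from the infinite layer to each finite layer.
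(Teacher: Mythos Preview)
Your overall strategy---reduce to $K_{a,b}^{(n)}/K$ via transitivity, bound the rank at each layer by the $\Z_p$-corank of the Selmer group over $K_{a,b}$, and then apply Shlapentokh---matches the paper's exactly. The difference is in how you pass from the infinite level to the finite layers. You invoke Mazur's control theorem for the extension $K_{a,b}/K$, claiming it is ``valid in this generality since we have assumed the Selmer group to be $\Lambda_{a,b}$-cotorsion.'' That justification is a non sequitur: cotorsion of the Selmer group is typically a \emph{consequence} of control, not a hypothesis for it, and for a non-cyclotomic $\Z_p$-extension the cokernel of the control map can genuinely be infinite (e.g.\ when primes of bad reduction split completely). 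The paper only states the control theorem for the cyclotomic extension, so as written this step is a gap.

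The paper sidesteps this entirely with a one-line argument: by inflation--restriction, the kernel of the restriction map
\[
\alpha_n\colon \op{Sel}_{p^\infty}\bigl(E/K_{a,b}^{(n)}\bigr)\longrightarrow \op{Sel}_{p^\infty}(E/K_{a,b})
\]
injects into $H^1\bigl(K_{a,b}/K_{a,b}^{(n)},\,E(K_{a,b})[p^\infty]\bigr)$, which vanishes by Lemma~\ref{trivial lemma}. Thus $\alpha_n$ is injective, and immediately
\[
\op{corank}_{\Z_p}\op{Sel}_{p^\infty}\bigl(E/K_{a,b}^{(n)}\bigr)\;\le\;\op{corank}_{\Z_p}\op{Sel}_{p^\infty}(E/K_{a,b})\;=\;1.
\]
No information about the cokernel, no invariants computation, and no control theorem is needed---only the upper bound matters, and injectivity gives it for free. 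Your detour through the $\Gamma_n$-invariants and the finite pseudo-null summand $A$ is correct but unnecessary. Likewise, your invocation of Lemma~\ref{trivial lemma} for $E(K_{a,b}^{(n)})[p^\infty]=0$ in the Kummer step is superfluous: the inequality $\op{rank}\,E(F)\le\op{corank}_{\Z_p}\op{Sel}_{p^\infty}(E/F)$ holds regardless of torsion. Finally, the paper simply takes $\op{rank}\,E(K)=1$ from the standing assumptions of the section rather than rederiving it via the regulator hypothesis; your more explicit justification is fine but not required at this point.
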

\begin{proof}
    It follows from Lemma \ref{basic lemma on lambda and corank} that 
    \[\op{Sel}_{p^\infty}(E/K_{a,b})\simeq \left(\Q_p/\Z_p\right)\oplus A,\] where $A$ is a finite $p$-group. It is easy to see (from the inflation-restriction sequence) that the kernel of the natural map 
    \[\alpha_n: \op{Sel}_{p^\infty}(E/K_{a,b}^{(n)})\rightarrow \op{Sel}_{p^\infty}(E/K_{a,b})\] injects into $H^1(K_{a,b}/K_{a,b}^{(n)},E(K_{a,b})[p^\infty])$. It follows from Lemma \ref{trivial lemma} that $E(K_{a,b})[p^\infty]=0$. Thus, $\alpha_n$ is injective. Therefore, it follows that 
    \[\op{corank}_{\Z_p}\left( \op{Sel}_{p^\infty}(E/K_{a,b}^{(n)})\right)\leq \op{corank}_{\Z_p}\left( \op{Sel}_{p^\infty}(E/K_{a,b})\right)=1.\]
    In particular, this implies that \[\op{rank} E(K_{a,b}^{(n)})\leq 1.\] On the other hand, since $\op{rank} E(K)=1$, it follows that for all $n$, 
    \[\op{rank} E(K_{a,b}^{(n)})=\op{rank}E(K)>0.\]
    Thus by Theorem \ref{shlap 1}, we have that $K_{a,b}^{(n)}/K$ is integrally diophantine. Since $K/\Q$ is abelian, it is an integrally diophantine extension. Thus, it follows that $K_{a, b}^{(n)}/\Q$ is integrally diophantine, thus verifying Conjecture \ref{denef lipchitz conjecture}. In particular, for all $n$, Hilbert's tenth problem has a negative answer for $K_{a,b}^{(n)}$.
\end{proof}

We shall give conditions for the assumptions of Theorem \ref{propn on H10 for finite layers} to hold. Let $M:=\op{Sel}_{p^\infty}(E/K_\infty)^\vee$, viewed as a module over $\Lambda_\infty=\Z_p\llbracket X, Y\rrbracket$. Following \cite{gajek2024conjecture}, we introduce a condition on the algebraic structure of $M$.

\begin{definition}The module $M$ is said to satisfy (\mylabel{hyp:SC}{\textbf{S-C}}) if it is a direct sum of cyclic torsion $\Lambda_\infty$-modules.
\end{definition}

The module $M$ over $\Z_p\llbracket X, Y\rrbracket$ is said to be \emph{pseudonull} if for all prime ideals $\mathfrak{p}$ of $\Z_p\llbracket X, Y\rrbracket$ of height $\leq 1$, one has that $M_{\mathfrak{p}}=0$. The maximal pseudonull submodule of $M$ is denoted by $M_o$. 

\begin{proposition}\label{Mo=0}
    With respect to notation above, we have that $M_o=0$.
\end{proposition}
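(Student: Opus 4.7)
The plan is to reduce the statement to a general theorem of Greenberg on the absence of nonzero pseudonull submodules in dual Selmer groups of elliptic curves with good ordinary reduction over multivariable Iwasawa algebras. The relevant result asserts that if $E$ has good ordinary reduction at $p$, $E(K_\infty)[p^\infty]=0$, and $\op{Sel}_{p^\infty}(E/K_{\op{cyc}})$ is $\Lambda_{\op{cyc}}$-cotorsion, then $M = \op{Sel}_{p^\infty}(E/K_\infty)^\vee$ has no nonzero pseudonull $\Lambda_\infty$-submodule. This is the same strategy employed in \cite{gajek2024conjecture}.

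First I would verify the three hypotheses. Good ordinary reduction at $p$ for $E$ (and $E^{(K)}$) is one of the standing assumptions. The cotorsion assertion follows from Proposition \ref{prop:selmer-decomposition-ord}, which decomposes $\op{Sel}_{p^\infty}(E/K_{\op{cyc}})$ as the direct sum $\op{Sel}_{p^\infty}(E/\Q_{\op{cyc}}) \oplus \op{Sel}_{p^\infty}(E^{(K)}/\Q_{\op{cyc}})$; the Kato--Rohrlich theorem then applies to each summand, yielding $\Lambda_{\op{cyc}}$-cotorsion. The vanishing of $E(K_\infty)[p^\infty]$ is deduced from $E(K)[p]=0$ via the argument of Lemmas \ref{lemma V^G=0 implies V=0} and \ref{trivial lemma}, generalized to the tower $K_\infty/K$: for each finite Galois subextension $F/K$ inside $K_\infty$, the module $V = E(F)[p]$ is an $\F_p$-vector space on which the finite $p$-group $\op{Gal}(F/K)$ acts with $V^{\op{Gal}(F/K)} = E(K)[p] = 0$, forcing $V = 0$; passing to the limit kills all $p$-primary torsion over $K_\infty$.

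Second, I would invoke the theorem to conclude $M_o = 0$. In the underlying proof, one considers the defining exact sequence
\begin{equation*}
0 \to \op{Sel}_{p^\infty}(E/K_\infty) \to H^1(\Q_\Sigma/K_\infty, E[p^\infty]) \to \bigoplus_{\ell \in \Sigma} J_\ell(E/K_\infty),
\end{equation*}
dualizes over $\Lambda_\infty$, and tracks pseudonull contributions in the global and local terms separately. The global term has no pseudonull $\Lambda_\infty$-submodule by a Jannsen-style argument leveraging the vanishing of $E[p^\infty](K_\infty)$ and the cohomological dimension of $\Gamma_\infty \cong \Z_p^2$; the local terms at primes of good ordinary reduction are controlled by local Tate duality combined with the explicit description of the ordinary Selmer condition via the formal group.

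The main obstacle I anticipate is the handling of the local terms at primes above $p$ and at primes of bad reduction: the good ordinary hypothesis enters crucially here through the structure of the filtration on the local Galois representation, and one must confirm that these local contributions remain pseudonull-free in the $\Z_p^2$-tower (as opposed to the $\Z_p$-tower, where the analogous argument is considerably simpler). However, since we are merely citing a theorem whose hypotheses we have verified, the argument ultimately reduces to a hypothesis check.
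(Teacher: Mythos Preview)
Your proposal is correct and takes essentially the same approach as the paper: both reduce the claim to a known theorem in the literature asserting that the dual Selmer group over the $\Z_p^2$-extension has no nonzero pseudonull submodule. The paper simply cites \cite[Proposition 6.1]{KLRnoncom} without further comment, whereas you explicitly verify the hypotheses (good ordinary reduction, vanishing of $E(K_\infty)[p^\infty]$, and $\Lambda_{\op{cyc}}$-cotorsionness) and attribute the underlying result to Greenberg via \cite{gajek2024conjecture}; the extra hypothesis-checking you provide is accurate and arguably makes the dependence on the standing assumptions more transparent.
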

\begin{proof}
    The result follows from \cite[Proposition 6.1]{KLRnoncom}.
\end{proof}

\begin{lemma}
\label{lemma:structure}
Suppose that $M$ satisfies \eqref{hyp:SC}.
Then there exist principal ideals $I_1,\dots, I_m$ of $\Lambda$ such that $M\simeq \bigoplus_{i=1}^m\Lambda/I_i$.
\end{lemma}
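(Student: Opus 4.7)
The plan is to exploit the UFD structure of $\Lambda = \Lambda_\infty = \Zp\llbracket X, Y\rrbracket$, which is a three-dimensional regular local ring, together with Proposition~\ref{Mo=0}, asserting the vanishing of the maximal pseudonull submodule $M_o$. Hypothesis \eqref{hyp:SC} immediately yields a decomposition
\[
M \simeq \bigoplus_{i=1}^m \Lambda/I_i
\]
for some nonzero ideals $I_i \subseteq \Lambda$, and the task is to upgrade this so that each $I_i$ is principal.

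The crux will be the following purely commutative-algebra observation: for a nonzero ideal $I \subseteq \Lambda$, if $I$ is not principal, then $\Lambda/I$ contains a nonzero pseudonull submodule. To prove it, I would factor $I = f \cdot J$, where $f$ is the gcd of a finite generating set of $I$ (available because $\Lambda$ is a UFD), so that $J$ is a ``content-free'' ideal whose generators share no common prime factor. If $I$ fails to be principal, then $J$ is a proper ideal not contained in any height-one prime, forcing $\mathrm{ht}(J) \geq 2$. The short exact sequence
\[
0 \to (f)/I \to \Lambda/I \to \Lambda/(f) \to 0,
\]
together with the isomorphism $\Lambda/J \xrightarrow{\ \cdot f\ } (f)/I$ (injective because $f$ is a nonzero element of the domain $\Lambda$), exhibits $\Lambda/J$ as a nonzero submodule of $\Lambda/I$ annihilated by an ideal of height $\geq 2$, hence pseudonull.

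Applying this observation to any $I_i$ that is not principal produces a nonzero pseudonull submodule inside $\Lambda/I_i$, which embeds into $M$ via the direct summand inclusion, contradicting Proposition~\ref{Mo=0}. Therefore every $I_i$ must be principal, completing the proof. The only mildly delicate step is the commutative algebra fact that a proper gcd-trivial ideal in a UFD has height at least two; everything else is a formal manipulation of the structure of $M$.
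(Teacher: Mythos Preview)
Your argument is correct. The paper's own proof simply invokes Proposition~\ref{Mo=0} together with an external citation (\cite[Theorem~2.4]{gajek2024conjecture}), whereas you unpack that citation by giving the underlying commutative-algebra argument explicitly: factoring each $I_i$ as $f_iJ_i$ with $J_i$ gcd-free, and observing that a non-principal $I_i$ forces $\Lambda/J_i \simeq (f_i)/I_i$ to be a nonzero pseudonull submodule of $\Lambda/I_i \hookrightarrow M$, contradicting $M_o=0$. The two approaches are therefore the same in spirit; yours is self-contained and makes transparent exactly how the UFD property of $\Lambda_\infty$ and the vanishing of $M_o$ combine, while the paper's version is terser but relies on the reader consulting the cited reference.
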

\begin{proof}
    The result follows directly from Proposition \ref{Mo=0} and \cite[Theorem 2.4]{gajek2024conjecture}.
\end{proof}
\begin{definition}
Suppose that $M$ satisfies \eqref{hyp:SC}.
The \emph{characteristic ideal} of $M$ is defined to be
\[
\op{char}_\Lambda(M)=\prod_{i=1}^m I_i,
\]
where $I_i$'s are the principal ideals prescribed in Lemma \ref{lemma:structure}.
\end{definition}

We recall an explicit condition for \eqref{hyp:SC} to be satisfied. 
\begin{proposition}
\label{prop:SC-ord}
With respect to notation above, suppose that $\lambda(\Sel(E/K_\cyc)^\vee)\le 1$ and $\mu(\Sel(E/K_\cyc)^\vee)=0$, then $\Sel(E/K_\infty)^\vee$ satisfies \eqref{hyp:SC}.
\end{proposition}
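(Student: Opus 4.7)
The plan is to bootstrap from the cyclotomic Iwasawa invariants to the two-variable structure of $M := \Sel(E/K_\infty)^\vee$, combining Mazur-style control in the ``anticyclotomic direction'' with the structure theory of torsion $\Lambda_\infty$-modules and the absence of nonzero pseudo-null submodules furnished by Proposition \ref{Mo=0}.

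First, I would apply the control theorem to the $\Z_p$-subextension $K_\infty / K_\cyc$, whose Galois group is $\Gamma_\ac = \overline{\langle \sigma \rangle}$ and which corresponds to the variable $X$. Because $E$ has good ordinary reduction at $p$ and $E(K_\infty)[p^\infty] = 0$ by Lemma \ref{trivial lemma}, the restriction map $\Sel(E/K_\cyc) \to \Sel(E/K_\infty)^{\Gamma_\ac}$ has finite kernel and cokernel. Pontryagin duality then yields a pseudo-isomorphism of $\Lambda_\cyc$-modules
\[
M/XM \;\sim\; \Sel(E/K_\cyc)^\vee.
\]
The hypothesis $\mu = 0$ and $\lambda \leq 1$ on the right-hand side forces $\op{char}_{\Lambda_\cyc}(M/XM)$ to be either the unit ideal (when $\lambda = 0$) or a distinguished polynomial of degree one in $Y$ (when $\lambda = 1$).

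Second, since $\Lambda_\infty = \Z_p\lb X, Y \rb$ is a UFD at height one and $M$ has no nonzero pseudo-null submodule by Proposition \ref{Mo=0}, the standard structure theorem for torsion $\Lambda_\infty$-modules gives an embedding $M \hookrightarrow \bigoplus_i \Lambda_\infty/(F_i^{e_i})$ with pseudo-null cokernel and $F_i \in \Lambda_\infty$ irreducible. Reducing modulo $X$ and comparing characteristic ideals with Step 1 pins down the possibilities: no $F_i$ can equal $X$ (otherwise $M/XM$ would have positive $\Lambda_\cyc$-rank, contradicting torsion); no $F_i$ can equal $p$ (that would force $\mu_\cyc(M/XM) > 0$); and at most one $F_i$ can contribute nontrivially to the $\lambda_\cyc$-invariant after specialization. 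Thus $M$ is pseudo-isomorphic to $0$ (when $\lambda = 0$) or to a single cyclic module $\Lambda_\infty/(F)$ for an irreducible $F$ whose reduction mod $X$ is a degree-one distinguished polynomial (when $\lambda = 1$).

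Finally, I would upgrade this pseudo-isomorphism to a genuine decomposition by appealing to \cite[Theorem 2.4]{gajek2024conjecture} and the machinery developed around it, which is designed precisely to promote pseudo-cyclic structure to honest cyclic structure for torsion $\Lambda_\infty$-modules with no pseudo-null submodule and controlled cyclotomic invariants. The outcome is $M = 0$ or $M \cong \Lambda_\infty/(F)$; in either case $M$ is trivially a direct sum of at most one cyclic torsion $\Lambda_\infty$-module, so condition \eqref{hyp:SC} holds. The principal obstacle is exactly this final upgrade: pseudo-null cokernels in the structure embedding cannot in general be eliminated from $M_o = 0$ alone, so the rigidity supplied by the cyclotomic hypotheses $\mu=0, \lambda \leq 1$ (via Step 1) is essential for the argument to close.
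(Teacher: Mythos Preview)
The paper does not give a proof here; it simply cites \cite[Proposition~5.1]{gajek2024conjecture}. Your plan is a reasonable attempt to reconstruct such an argument, and Steps~1--2 are broadly on track, but Step~3 contains a genuine gap. The promotion from ``$M$ is pseudo-isomorphic to a single $\Lambda_\infty/(F)$'' to ``$M$ is actually cyclic'' does \emph{not} follow from $M_o=0$ together with the cyclotomic bounds alone. For a concrete obstruction, take $M=(p,X)\subset\Z_p\lb X\rb$, regarded as a $\Lambda_\infty$-module with $Y$ acting by~$0$. Then $M$ is torsion, $M_o=0$ (every nonzero element has annihilator exactly $(Y)$), $M$ embeds in $\Lambda_\infty/(Y)$ with pseudo-null cokernel~$\F_p$, and $M/XM\cong\Z_p\oplus\Z/p\Z$ has $\mu=0$, $\lambda=1$ over $\Lambda_\cyc$; yet $M$ is not a direct sum of cyclic $\Lambda_\infty$-modules. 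So no purely module-theoretic ``machinery'' of the sort you invoke can close the argument. Your appeal to \cite[Theorem~2.4]{gajek2024conjecture} is also misdirected: as used in this paper via Lemma~\ref{lemma:structure}, that result already \emph{assumes} \eqref{hyp:SC} rather than establishing it.

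What is missing is sharper arithmetic input. One expects the argument in \cite{gajek2024conjecture} to exploit that $\Sel(E/K_\cyc)^\vee$ has no nonzero finite $\Lambda_\cyc$-submodule, so that $\mu=0$ and $\lambda\le1$ force it to be literally $0$ or free of rank~$1$ over $\Z_p$, hence cyclic over $\Lambda_\cyc$; combined with a control statement yielding an honest isomorphism $M/XM\cong\Sel(E/K_\cyc)^\vee$ (rather than merely a pseudo-isomorphism), Nakayama's lemma then gives that $M$ itself is generated by a single element. Your Step~1, which only produces a map with finite kernel and cokernel, cannot by itself exclude the extra $\Z/p\Z$ in $M/XM$ exhibited in the example above, and that is precisely where your outline fails to close.
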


\begin{proof}
    The result is \cite[Proposition 5.1]{gajek2024conjecture}.
\end{proof}

\begin{definition}The condition (\mylabel{hyp:Na}{\textbf{N-a}}) is satisfied if for all primes $v|p$ of $K$, we have that $p\nmid \#\widetilde{E}(\kappa_v)$. 
\end{definition}

\begin{proposition}
\label{prop:control-ord}
Suppose that $E$ satisfies \eqref{hyp:Na}. Then the natural restriction map then induces an isomorphism
\[
\Sel(E/K_{a,b})\simeq \Sel(E/K_\infty)^{H_{a,b}}.
\]
\end{proposition}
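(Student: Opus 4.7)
The plan is to adapt Mazur's classical control theorem argument to the present setting, where $K_\infty/K$ is a $\mathbb{Z}_p^2$-extension and $K_{a,b}/K$ is one of its $\mathbb{Z}_p$-subextensions. First I would form the usual commutative diagram
\[
\begin{tikzcd}[column sep = small]
0 \arrow{r} & \Sel(E/K_{a,b}) \arrow{r} \arrow{d}{\alpha} & H^1(\Q_\Sigma/K_{a,b}, E[p^\infty]) \arrow{r} \arrow{d}{\beta} & \bigoplus_{v\in\Sigma} J_v(E/K_{a,b}) \arrow{d}{\gamma} \\
0 \arrow{r} & \Sel(E/K_\infty)^{H_{a,b}} \arrow{r} & H^1(\Q_\Sigma/K_\infty, E[p^\infty])^{H_{a,b}} \arrow{r} & \bigl(\bigoplus_{v\in\Sigma} J_v(E/K_\infty)\bigr)^{H_{a,b}}
\end{tikzcd}
\]
whose rows are the defining left-exact sequences for the two Selmer groups and whose vertical arrows are the natural restriction maps. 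By the snake lemma, proving that $\alpha$ is an isomorphism reduces to verifying that $\ker(\beta)=0$ and $\ker(\gamma_v)=0$ for every $v\in\Sigma$.

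For the global kernel, inflation--restriction identifies $\ker(\beta)$ with $H^1(H_{a,b}, E(K_\infty)[p^\infty])$. The standing hypothesis $E(K)[p]=0$, combined with a layer-by-layer application of Lemma \ref{lemma V^G=0 implies V=0} (as in the proof of Lemma \ref{trivial lemma}, but now applied to the finite subextensions of the $\mathbb{Z}_p^2$-tower $K_\infty/K$), forces $E(K_\infty)[p^\infty]=0$, and hence $\ker(\beta)=0$. For the local kernels, inflation--restriction at each place $w$ of $K_{a,b}$ above $v$ identifies the relevant contribution with $H^1(\mathcal{H}_w, E(K_{\infty,w'}))[p^\infty]$, where $\mathcal{H}_w$ is the decomposition subgroup of $H_{a,b}$ at $w$ and $w'$ is a chosen place of $K_\infty$ above $w$. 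When $v\nmid p$, the extension $K_\infty/K$ is unramified at $v$, and a standard computation using good reduction together with the low cohomological dimension of the local Galois group produces the required vanishing. When $v\mid p$, I would invoke the ordinary filtration $0\to\widehat{E}[p^\infty]\to E[p^\infty]\to\widetilde{E}[p^\infty]\to 0$: the formal-group piece contributes nothing by a Coates--Greenberg style computation in the deeply ramified tower, while the unramified quotient is controlled by $\widetilde{E}(\kappa_{w'})[p^\infty]$, which hypothesis \eqref{hyp:Na} forces to be zero (since the residue field extensions inside $K_\infty/K$ are pro-$p$, the vanishing of $\widetilde{E}(\kappa_v)[p]$ propagates to $\widetilde{E}(\kappa_{w'})[p]=0$ by an elementary Frobenius-eigenvalue argument).

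The main obstacle will be carefully executing the local analysis at primes $v\mid p$, since the completed Galois group $\mathrm{Gal}(K_{\infty,w'}/K_v)$ need not be pro-cyclic but may be a quotient of $\mathbb{Z}_p^2$, so the single-variable cyclotomic control arguments must be extended to this two-dimensional setting. Once both $\ker(\beta)=0$ and $\ker(\gamma_v)=0$ have been established, the snake lemma immediately yields the desired isomorphism $\Sel(E/K_{a,b})\simeq \Sel(E/K_\infty)^{H_{a,b}}$.
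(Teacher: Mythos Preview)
The paper does not prove this proposition itself; it simply cites \cite[Proposition~3.2]{gajek2024conjecture}. Your proposal is a direct control-theorem argument in the style of Mazur and Greenberg, which is exactly the approach carried out in the cited reference, so in spirit you are reproducing the intended proof rather than giving an alternative one.

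That said, two points in your outline need tightening. First, showing $\ker(\beta)=0$ and $\ker(\gamma)=0$ is not by itself enough for $\alpha$ to be surjective; you also need $\operatorname{coker}(\beta)=0$. This comes for free from the same fact you already use: since $E(K_\infty)[p^\infty]=0$, inflation--restriction gives $H^2(H_{a,b},E(K_\infty)[p^\infty])=0$ and hence $\beta$ is an isomorphism, but you should say so explicitly. Second, your treatment of primes $v\nmid p$ invokes ``good reduction'', yet $\Sigma$ contains the primes of bad reduction. The actual dichotomy is purely group-theoretic: since $K_\infty/K$ is unramified at $v$, the decomposition group $D_v\subset\Gamma_\infty$ is pro-cyclic, and either it maps nontrivially to $\Gamma_{a,b}$ (so every prime of $K_{a,b}$ above $v$ splits completely in $K_\infty$, giving zero kernel), or $D_v\subset H_{a,b}$ and one must analyse $H^1$ of the unramified $\mathbb{Z}_p$-extension of $K_v$ with coefficients in $E$, which is handled uniformly (independent of reduction type) via local duality and the vanishing of the relevant inverse limit under norm maps. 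Your phrase ``low cohomological dimension'' points in the right direction, but the reduction-type caveat should be dropped and this case made explicit.
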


\begin{proof}
    The result is \cite[Proposition 3.2]{gajek2024conjecture}.
\end{proof}

\begin{theorem}\label{main thm 1}
Let $(a,b)$ and $(1, 0)$ be $p$-congruent, i.e., $a\equiv 1\pmod{p}$ and $b\equiv 0\pmod{p}$. Suppose that the following conditions hold
    \begin{enumerate}
         \item \eqref{hyp:Na} is satisfied, 
        \item $\op{Sel}_{p^\infty}(E/K_{\op{cyc}})$ is a cofinitely generated and cotorsion $\Lambda_{\op{cyc}}$-module.
        \item $\mu_p(E/K_{\op{cyc}})=0$ and $\lambda_p(E/K_{\op{cyc}})=1$.
    \end{enumerate} 
    Then, the following assertions hold:
    \begin{enumerate}
        \item $\op{Sel}_{p^\infty}(E/K_{\op{cyc}})$ is a cofinitely generated and cotorsion $\Lambda_{a,b}$-module,
        \item $\mu_p(E/K_{a,b})=0$ and $\lambda_p(E/K_{a,b})=1$.
        \item For all $n\in \Z_{\geq 0}$, the Conjecture \ref{denef lipchitz conjecture} is satisfied for $K_{a,b}^{(n)}$. In particular, Hilbert's tenth problem has a negative answer for $K_{a,b}^{(n)}$.
    \end{enumerate} 
\end{theorem}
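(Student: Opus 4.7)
The plan is to exploit the structure of the dual Selmer group over the $\Z_p^2$-extension $K_\infty$ and then descend to each $K_{a,b}$ via the control theorem. Since $\mu_p(E/K_{\op{cyc}}) = 0$ and $\lambda_p(E/K_{\op{cyc}}) = 1 \le 1$, Proposition \ref{prop:SC-ord} shows that $M := \op{Sel}(E/K_\infty)^\vee$ satisfies \eqref{hyp:SC}, and Lemma \ref{lemma:structure} then yields a decomposition
\[
M \;\simeq\; \bigoplus_{i=1}^{m} \Lambda_\infty/(g_i)
\]
for some nonzero non-units $g_i \in \Lambda_\infty$. Assumption \eqref{hyp:Na} combined with Proposition \ref{prop:control-ord} gives, after taking Pontryagin duals,
\[
\op{Sel}(E/K_{a,b})^\vee \;\simeq\; M/f_{a,b}M \;\simeq\; \bigoplus_{i=1}^{m} \Lambda_{a,b}/(\bar{g}_i), \qquad \bar{g}_i := \pi_{a,b}(g_i).
\]

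Specializing to $(a,b) = (1,0)$, one has $f_{1,0} = X$ and $\bar{g}_i = g_i(0,Y) \in \Z_p\llbracket Y\rrbracket$. The hypotheses $\mu_p(E/K_{\op{cyc}})=0$ and $\lambda_p(E/K_{\op{cyc}})=1$ combined with additivity of Iwasawa invariants over direct sums force $m = 1$: each $g_i(0, Y)$ is a nonzero non-unit (nonzero because $\op{Sel}(E/K_{\op{cyc}})$ is cotorsion, non-unit because $g_i$ is), so it contributes $\mu+\lambda \ge 1$ to a total of $0+1=1$. Writing $g := g_1$ and applying Weierstrass preparation,
\[
g(0, Y) \;=\; u(Y) \cdot (Y + p c_0) \qquad \text{for some } u \in \Z_p\llbracket Y\rrbracket^\times,\ c_0 \in \Z_p.
\]

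Now fix $(a, b) \equiv (1, 0) \pmod p$; write $a = 1 + p\alpha$, $b = p\beta$, and note $a \in \Z_p^\times$. The image of $\tau$ in $\Gamma_{a,b}$ is therefore a topological generator; choose $\gamma_{a,b}$ to be this image, so $T := \gamma_{a,b} - 1$ satisfies $\pi_{a,b}(Y) = T$. The relation $\sigma^a \tau^b \in H_{a,b}$ gives $\pi_{a,b}(\sigma) = \gamma_{a,b}^{c}$ with $c := -b/a \in p\Z_p$, hence $\pi_{a,b}(X) = h(T) := (1+T)^c - 1$. Writing $c = p c'$ and using $(1+T)^p \equiv 1 + T^p \pmod{p}$ gives
\[
h(T) \;\equiv\; (1+T^p)^{c'} - 1 \pmod{p},
\]
which lies in $T^p \cdot \F_p\llbracket T\rrbracket$; equivalently $h(T) \in (p, T^p) \subset \Z_p\llbracket T \rrbracket$. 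Writing $g(X,Y) = u(Y)(Y + p c_0) + X \cdot R(X,Y)$ for some $R \in \Lambda_\infty$ and substituting,
\[
\bar{g}(T) \;=\; u(T)(T + p c_0) + h(T) \cdot R(h(T), T) \;\equiv\; u(T)\, T \pmod{(p,\, T^p)}.
\]
Since $u(0) \in \Z_p^\times$, the coefficient of $T$ in $\bar{g} \bmod p$ is a nonzero element of $\F_p$, so by Weierstrass preparation $\mu(\bar{g}) = 0$ and $\lambda(\bar{g}) = 1$. In particular $\bar{g} \neq 0$, so $\op{Sel}(E/K_{a,b})^\vee \simeq \Lambda_{a,b}/(\bar{g})$ is cofinitely generated and cotorsion with the stated Iwasawa invariants, establishing (1) and (2); assertion (3) then follows at once from Proposition \ref{propn on H10 for finite layers}. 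The main technical step is the mod-$p$ analysis of $h(T)$ and $\bar{g}$ above, which rests on the special shape of $(1+T)^c - 1$ when $c \in p\Z_p$ and the interaction of the substitution $X \mapsto h(T)$ with Weierstrass preparation.
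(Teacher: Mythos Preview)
Your proof is correct and follows essentially the same route as the paper: both use \eqref{hyp:SC} and the control theorem to reduce to analyzing the image of the generator(s) under $\pi_{a,b}$, identify $\pi_{a,b}(X)$ with $(1+T)^{-b/a}-1$ where $-b/a\in p\Z_p$, and compare coefficients with the cyclotomic specialization. The one difference is that you first deduce $m=1$ from the constraint $\mu+\lambda=1$ over $K_{\op{cyc}}$ and then work with a single generator $g$, whereas the paper keeps the product $F=\prod_i g_i$ throughout; your reduction is a clean simplification but not a different idea, and your congruence $h(T)\in(p,T^p)$ is slightly sharper than what is strictly needed (only $h(0)=0$ and $p\mid h'(0)$ are used).
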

\begin{proof}
   Note that it follows from Proposition \ref{prop:SC-ord} that \eqref{hyp:SC} is satisfied. Lemma \ref{lemma:structure} then implies that $\op{Sel}_{p^\infty}(E/K_\infty)^\vee$ decomposes as a direct sum of cyclic modules as follows
\[
\op{Sel}_{p^\infty}(E/K_\infty)^\vee\simeq \bigoplus_{i=1}^m \Lambda_\infty/(g_i),
\]
for $g_1, \dots, g_m \in \Lambda$. We note that $\Lambda_\infty=\Z_p\llbracket X, Y\rrbracket$ and $\Lambda_{a,b}=\Z_p\llbracket X, Y\rrbracket/(f_{a,b})$, where
$f_{a,b}=(1+X)^a(1+Y)^b-1$. By Proposition~\ref{prop:control-ord},
\[
\Sel(E/K_\infty)^\vee_{H_{a,b}}\simeq \Sel(E/K_{a,b})^\vee\simeq \bigoplus_{i=1}^m \Lambda/(g_i,f_{a,b})\simeq \bigoplus_{i=1}^m \Lambda_{a,b}/(\pi_{a,b}(g_i)).
\]
We identify $\Lambda_{\op{cyc}}$ with $\Z_p\llbracket X, Y\rrbracket/(X)=\Z_p\llbracket Y\rrbracket$. Observe that
\[f_{a,b}=(1+X)^a(1+Y)^b-1=aX+bY+\emph{higher degree terms},\] and that there is a uniquely determined power series $g(Y)\in \Z_p\llbracket Y\rrbracket$ such that 
\[f_{a,b}(g(Y), Y)=0.\]
The function \[g(Y)=c_1 Y+c_2 Y^2+c_3 Y^3+\dots \] is constructed by inductively solving for $c_i$, noting that $c_1=-b/a$. There is a natural identification
\[\Lambda_{a,b}=\Z_p\llbracket X,Y\rrbracket/(f_{a,b})\xrightarrow{\sim} \Z_p\llbracket Y\rrbracket,\] where the isomorphism takes $X\mapsto g(Y)$ and $Y\mapsto Y$. Let $F(X,Y)=\prod_i g_i$ be the characteristic ideal of $\op{Sel}_{p^\infty}(E/K_\infty)$. Let $h(Y)$ and $h_{a, b}(Y)$ be the characteristic ideals of $\op{Sel}_{p^\infty}(E/K_{\op{cyc}})$ and $\op{Sel}_{p^\infty}(E/K_{a,b})$ respectively. Note that $\pi_{1,0}(X)=0$ and $\pi_{1,0}(Y)=Y$. Thus, we find that 
\[
    h(Y)=F(0,Y)\text{ and } h_{a,b}(Y)=F(g(Y), Y).\]
    Write $h(Y)=a_0+a_1 Y+a_2Y^2+\dots$ and $h_{a,b}(Y)=b_0+b_1 Y+\dots$. By construction $g(Y)=a^{-1}b Y+c_2 Y^2+c_3Y^3+\dots$, and note that $p|a^{-1}b$; we find that $a_0=b_0$ and $a_1\equiv b_1\pmod{p}$. Since $\mu_p(E/K_{\op{cyc}})=0$ and $\lambda_p(E/K_{\op{cyc}})=1$, we have that $a_0=0$ and that $p\nmid a_1$. Thus, we find that $b_0=0$ and $p\nmid b_1$. This in particular implies that $\op{Sel}_{p^\infty}(E/K_{a,b})$ is cotorsion (since $h_{a,b}(T)\neq 0$). Moreover, $b_0=0$ and $p\nmid b_1$ implies that $\mu_p(E/K_{a,b})=0$ and $\lambda_p(E/K_{a,b})=1$. This proves part (2). Part (3) then follows from Proposition \ref{propn on H10 for finite layers}.
\end{proof}
\begin{theorem}\label{main thm 2}
    Assume that we have the following conditions
     \begin{enumerate}
         \item \eqref{hyp:Na} is satisfied, 
        \item $\op{Sel}_{p^\infty}(E/K_{\op{cyc}})$ is a cofinitely generated and cotorsion $\Lambda_{\op{cyc}}$-module.
        \item $\mu_p(E/K_{\op{cyc}})=0$ and $\lambda_p(E/K_{\op{cyc}})=1$.
    \end{enumerate} 
    Then there exists exactly one line $(\bar{a}_0: \bar{b}_0)\in \mathbb{P}^1(\mathbb{F}_p)$ such that for all $(a,b)\in \mathbb{P}^1(\Z_p)$ with $(\bar{a}, \bar{b})\neq (\bar{a}_0: \bar{b}_0)$, 
    \begin{enumerate}
        \item $\op{Sel}_{p^\infty}(E/K_{a,b})$ is a cofinitely generated and cotorsion $\Lambda_{a,b}$-module,
        \item $\mu_p(E/K_{a,b})=0$ and $\lambda_p(E/K_{a,b})=1$.
         \item For all $n\in \Z_{\geq 0}$, the Conjecture \ref{denef lipchitz conjecture} is satisfied for $K_{a,b}^{(n)}$. In particular, Hilbert's tenth problem has a negative answer for $K_{a,b}^{(n)}$.
    \end{enumerate}
\end{theorem}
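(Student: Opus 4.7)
The plan is to extend the characteristic-series computation from the proof of Theorem~\ref{main thm 1} (which treated only lines congruent to $(1, 0) \bmod p$) to an arbitrary $(a, b) \in \mathbb{P}^1(\mathbb{Z}_p)$, and to use the resulting formula to isolate the unique exceptional line. First, I would invoke Proposition~\ref{prop:SC-ord} to conclude that $\op{Sel}_{p^\infty}(E/K_\infty)^\vee$ satisfies \eqref{hyp:SC}, whence by Lemma~\ref{lemma:structure} it decomposes as $\bigoplus_{i=1}^m \Lambda_\infty / (g_i)$. Setting $F := \prod_i g_i \in \Lambda_\infty$, I would write its Taylor expansion at the origin as
\[
F(X, Y) = F(0, 0) + \alpha X + \beta Y + \bigl(\text{terms of total degree}\geq 2\bigr),
\]
with $\alpha := F_X(0, 0)$ and $\beta := F_Y(0, 0)$. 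The cyclotomic specialization $F(0, Y)$ is the characteristic element of $\op{Sel}_{p^\infty}(E/K_{\op{cyc}})$, so the hypotheses $\mu_p(E/K_{\op{cyc}}) = 0$ and $\lambda_p(E/K_{\op{cyc}}) = 1$ force $p \mid F(0, 0)$ and $\bar\beta \in \mathbb{F}_p^\times$.

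Next, Proposition~\ref{prop:control-ord} (which uses \eqref{hyp:Na}) would give the control isomorphism $\op{Sel}_{p^\infty}(E/K_{a,b})^\vee \simeq \op{Sel}_{p^\infty}(E/K_\infty)^\vee / f_{a,b}$, identifying the characteristic element of $\op{Sel}_{p^\infty}(E/K_{a,b})$ with $\pi_{a,b}(F)$. For $p \nmid a$, exactly as in the proof of Theorem~\ref{main thm 1}, I would uniquely solve $f_{a,b}(X, Y) = 0$ as $X = g(Y) = -(b/a) Y + O(Y^2)$ in $\mathbb{Z}_p\llbracket Y \rrbracket$ and obtain
\[
\pi_{a,b}(F) \;=\; F(g(Y), Y) \;=\; F(0, 0) + \Bigl(\beta - \tfrac{\alpha b}{a}\Bigr) Y + O(Y^2).
\]
Since $p \mid F(0, 0)$, the conditions $\mu_p(E/K_{a,b}) = 0$ and $\lambda_p(E/K_{a,b}) = 1$ would reduce to requiring the linear coefficient $(a\beta - \alpha b)/a$ to be a $p$-adic unit, equivalently $a\beta - \alpha b \not\equiv 0 \pmod{p}$. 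A symmetric computation in the complementary case $p \mid a$ (necessarily $p \nmid b$), obtained by solving $Y = -(a/b) X + O(X^2)$ and substituting into $F$, would yield the same criterion.

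Because $\bar\beta \neq 0$, the locus $\{\bar a \bar\beta = \bar\alpha \bar b\}$ in $\mathbb{P}^1(\mathbb{F}_p)$ is the single line $(\bar a_0 : \bar b_0) := (\bar\alpha : \bar\beta)$. For every $(a, b)$ whose reduction avoids this line I would then have claims (1) and (2); claim (3) would follow from Proposition~\ref{propn on H10 for finite layers}. The one piece of bookkeeping to check carefully is the consistency of the two sub-cases $p \nmid a$ and $p \nmid b$, which is ensured by the manifest symmetry of the criterion in $\alpha b$ and $\beta a$. Beyond this the proof is essentially a mechanical extension of the computation in Theorem~\ref{main thm 1}, and no conceptual obstacle appears; the only subtlety is in making sure that the characteristic element of a single cyclic summand $\Lambda_\infty/(g_i)$ pushed through $\pi_{a,b}$ really lives in $\Lambda_{a,b}$ and is captured by the substitution above, which is straightforward from the ring-map property of $\pi_{a,b}$.
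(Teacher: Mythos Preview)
Your proposal is correct and follows essentially the same approach as the paper. The paper likewise passes to $F(X,Y)=\prod_i g_i$, uses Proposition~\ref{prop:control-ord} to identify $h_{a,b}$ with the specialization of $F$ along $f_{a,b}=0$, expands to first order, and reads off the exceptional line from the vanishing mod $p$ of the linear coefficient. The only cosmetic difference is that the paper splits into the sub-cases ``$a_{1,0}$ a unit'' versus ``$p\mid a_{1,0}$'' (the latter forcing the anticyclotomic line to be exceptional), while you package both into the single criterion $a\beta-\alpha b\not\equiv 0\pmod p$ and identify the excluded line directly as $(\bar\alpha:\bar\beta)$; these are equivalent formulations of the same computation.
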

\begin{proof}
    Let $g_i$ be as in the proof of Proposition \ref{main thm 1} and write \[F(X,Y):=\prod g_i=\sum_{i,j}a_{i,j}X^iY^j.\] Consider as before the power series $f_{a,b}$ (but this time we do not assume that $a\equiv 1\pmod{p}$, $b\equiv 0\pmod{p}$). 

    We first consider the case where $a$ is a $p$-adic unit. As the cyclotmic line is already covered in Proposition \ref{main thm 1}, we can assume that $b$ is coprime to $p$ as well. As in the proof of Proposition \ref{main thm 1} there is a uniquely determined power series $g(Y)\in \Z_p\llbracket Y\rrbracket$ such that $f_{a,b}(g(Y), Y)=0$ and we find that $h_{a,b}(Y)=F(g(Y),Y)$. By construction, $g(Y)$ is divisible by $Y$ and the linear coefficient is $-b/a$ is a $p$-adic unit. We obtain
    \[\begin{split}& h_{a,b}(Y)=F(g(Y),Y)=a_{0,0}+(-a_{1,0}b/a+a_{0,1})Y+O(Y^2);\\
    &h(Y)=F(0,Y)=a_{0,0}+a_{0,1}Y+O(Y^2).\end{split}\]
    As $\mu_p(E/K_{\op{cyc}})=0$ and $\lambda_p(E/K_{\op{cyc}})=1$ we know that $a_{0,0}$ is divisible by $p$ and that $a_{0,1}$ is a $p$ adic unit. If $a_{1,0}$ is coprime to $p$, there exists exactly one value for $b\pmod p$ such that $(-a_{1,0}b/a+a_{0,1})$ is divisible by $p$, i.e. such that $\mu_p(E/K_{a,b})\neq 0$ or $\lambda_p(E/K_{a,b})\neq 1$. If $p$ divides $a_{1,0}$ then the coeffcient $(-a_{1,0}b/a+a_{0,1})$ is always coprime to $p$ and we obtain $\lambda_p(E/K_{a,b})=1$ and $\mu_p(E/K_{a,b})=0$ for all values of $(a,b)$. 

    It remains to consider the case $a=0$ and $b=1$, i.e. $f_{a,b}=Y$. In this case we obtain the anticyclotomic $\Z_p$-extension and the characteristic power series associated to the Selmer group is 
    \[F(X, 0)=a_{0,0}+Xa_{1,0}+O(X^2)\]
    Recall that $p$ divides $a_{0,0}$. We find that $p\nmid  a_{1,0}$ if and only if $\mu_p(E/K_{a,b})=0$ and $\lambda_p(E/K_{a,b})=1$.

    Therefore, we find that there is exactly one line $(\bar{a}_0, \bar{b}_0)\in \mathbb{P}^1(\F_p)$ such that $f_{a,b}\neq 0$, $\mu_p(E/K_{a,b})=0$ and $\lambda_p(E/K_{a,b})=1$ for all $(a,b)\not\equiv (\bar{a}_0, \bar{b}_0)\pmod{p}$. This proves parts (1) and (2). Part (3) then follows from the proof of Proposition \ref{propn on H10 for finite layers}.
\end{proof}
We now give conditions for the conditions of Proposition \ref{main thm 1} (or equivalently, Proposition \ref{main thm 2}) to be satisfied. Note that if $E$ is defined over $\Q$, it follows from the results of Kato and Rohrlich that (2) is satisfied. Recall that Proposition \ref{prop:selmer-decomposition-ord} asserts that 
\[
    \mathrm{Sel}_{p^\infty}(E/K_{\mathrm{cyc}}) \simeq \mathrm{Sel}_{p^\infty}(E/\mathbb{Q}_{\mathrm{cyc}}) \oplus \mathrm{Sel}_{p^\infty}(E^{(K)}/\mathbb{Q}_{\mathrm{cyc}}).
    \]
    Note that $\mu_p(E/K_{\op{cyc}})=0$ if and only if $\mathrm{Sel}(E/K_{\mathrm{cyc}})$ is cofinitely generated as a $\Z_p$-module. Likewise, $\mu_p(E/\Q_{\op{cyc}})=0$ (resp. $\mu_p(E^{(K)}/\Q_{\op{cyc}})=0$) if and only if $\mathrm{Sel}_{p^\infty}(E/\Q_{\mathrm{cyc}})$ (resp. $\mathrm{Sel}_{p^\infty}(E^{(K)}/\Q_{\mathrm{cyc}})$) is cofinitely generated as a $\Z_p$-module. Thus we find that 
    \[\mu_p(E/K_{\op{cyc}})=0\Leftrightarrow \mu_p(E/\Q_{\op{cyc}})=\mu_p(E^{(K)}/\Q_{\op{cyc}})=0.\]
    If the above equivalent conditions are satisfied, then, 
    \[\begin{split}\lambda_p(E/K_{\op{cyc}})=&\op{corank}_{\Z_p} \left(\mathrm{Sel}_{p^\infty}(E/K_{\mathrm{cyc}})\right) \\ =&\op{corank}_{\Z_p} \left(\mathrm{Sel}_{p^\infty}(E/\Q_{\mathrm{cyc}})\right)+\op{corank}_{\Z_p} \left(\mathrm{Sel}_{p^\infty}(E^{(K)}/\Q_{\mathrm{cyc}})\right).\end{split}\]
For condition (3), we shall require that $\op{corank}_{\Z_p} \op{Sel}_{p^\infty}(E/\Q)=1$, $\op{corank}_{\Z_p} \op{Sel}_{p^\infty}(E^{(K)}/\Q)=0$ and that the Iwasawa invariants are prescribed as follows
\[\begin{split}& \mu_p(E/\Q_{\op{cyc}})=\mu_p(E^{(K)}/\Q_{\op{cyc}})=0,\\
& \lambda_p(E/\Q_{\op{cyc}})=1\text{ and }\lambda_p(E^{(K)}/\Q_{\op{cyc}})=0.\end{split}\]

\begin{theorem}\label{main thm 3}
    Let $E_{/\Q}$ be an elliptic curve, $p$ an odd prime and $K$ be an imaginary quadratic field. Assume that the following conditions hold
    \begin{enumerate}
        \item $\op{corank}_{\Z_p} \op{Sel}_{p^\infty}(E/\Q)=1$ and $\op{corank}_{\Z_p} \op{Sel}_{p^\infty}(E^{(K)}/\Q)=0$;
        \item $E$ and $E^{(K)}$ have good ordinary reduction at $p$; 
        \item $\widetilde{E}(\F_p)[p]=0$ and $\widetilde{E}^{(K)}(\F_p)[p]=0$;
        \item $p\nmid \prod_{\ell\neq p} c_\ell(E/\Q)\times \prod_{\ell\neq p} c_\ell(E^{(K)}/\Q)$;
        \item the normalized $p$-adic regulator of $E$ over $\Q$ is a $p$-adic unit. 
    \end{enumerate}
    Let $(a_0,b_0)$ be a representative for the line excluded in Proposition \ref{main thm 2}. 
    Then, for all $(a,b)\not \equiv (a_0, b_0)\pmod{p}$, the following assertions hold:
    \begin{enumerate}
        \item $\op{Sel}_{p^\infty}(E/K_{a,b})$ is a cofinitely generated and cotorsion $\Lambda_{a,b}$-module,
        \item $\mu_p(E/K_{a,b})=0$ and $\lambda_p(E/K_{a,b})=1$.
        \item For all $n\in \Z_{\geq 0}$, the Conjecture \ref{denef lipchitz conjecture} is satisfied for $K_{a,b}^{(n)}$. In particular, Hilbert's tenth problem has a negative answer for $K_{a,b}^{(n)}$.
    \end{enumerate} 
\end{theorem}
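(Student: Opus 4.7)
The plan is to deduce Theorem \ref{main thm 3} from Theorem \ref{main thm 2} by verifying, under the assumptions (1)--(5), the three hypotheses of Theorem \ref{main thm 2}: (a) the condition \eqref{hyp:Na}; (b) that $\op{Sel}_{p^\infty}(E/K_{\op{cyc}})$ is cofinitely generated and cotorsion over $\Lambda_{\op{cyc}}$; and (c) $\mu_p(E/K_{\op{cyc}}) = 0$ together with $\lambda_p(E/K_{\op{cyc}}) = 1$. Once these are in hand, the conclusions (1)--(3) of the theorem are exactly the conclusions of Theorem \ref{main thm 2}.

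For (a), I would split into cases according to the splitting of $p$ in $K$. If $p$ splits or ramifies, every prime $v \mid p$ has residue field $\kappa_v = \F_p$, so condition (3) directly forces $p \nmid \#\widetilde{E}(\kappa_v)$. If $p$ is inert, then $\kappa_v = \F_{p^2}$ and I would use the standard identity
\[
\#\widetilde{E}(\F_{p^2}) = \#\widetilde{E}(\F_p) \cdot \#\widetilde{E}^{(K)}(\F_p),
\]
which holds because $K_v/\Q_p$ is the unramified quadratic extension and the quadratic twist realizing $E^{(K)}$ corresponds to the nontrivial character of $\op{Gal}(\F_{p^2}/\F_p)$; condition (3) again makes both factors coprime to $p$. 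Hypothesis (b) is immediate from the Kato--Rohrlich theorem, since $E$ is defined over $\Q$ and $K_{\op{cyc}}/\Q$ is abelian (being the compositum of the abelian extensions $K/\Q$ and $\Q_{\op{cyc}}/\Q$).

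The heart of the argument is (c). By Proposition \ref{prop:selmer-decomposition-ord} we have the $\Lambda_{\op{cyc}}$-module splitting
\[
\op{Sel}_{p^\infty}(E/K_{\op{cyc}}) \simeq \op{Sel}_{p^\infty}(E/\Q_{\op{cyc}}) \oplus \op{Sel}_{p^\infty}(E^{(K)}/\Q_{\op{cyc}}),
\]
so Iwasawa invariants are additive and it suffices to establish $(\mu_p,\lambda_p)(E/\Q_{\op{cyc}}) = (0,1)$ and $(\mu_p,\lambda_p)(E^{(K)}/\Q_{\op{cyc}}) = (0,0)$. For the $E/\Q$ summand, condition (1) gives $\op{corank}_{\Z_p}\op{Sel}_{p^\infty}(E/\Q) = 1$, so by Lemma \ref{TEC is well defined} the truncated Euler characteristic is defined; Corollary \ref{cor ECF mulambda} equates $(0,1)$ with each factor in the Perrin-Riou--Schneider formula (Theorem \ref{th ECF}) being a $p$-adic unit. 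Condition (5) supplies the normalized regulator, (3) the term $\#\widetilde{E}(\F_p)[p^\infty]$, and (4) the Tamagawa product. For the $E^{(K)}$ summand, condition (1) makes $\op{Sel}_{p^\infty}(E^{(K)}/\Q)$ finite, so the classical Euler characteristic applies (with no regulator, since the rank is $0$), and (3), (4) again supply the required $p$-unit conditions to give $(0,0)$. An appeal to Theorem \ref{main thm 2} then yields assertions (1)--(3) of the theorem.

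I expect the main technical obstacle to be handling the Tate--Shafarevich factor $\#\Sha(\cdot/\Q)[p^\infty]$ that also appears in the Perrin-Riou--Schneider formula but is not explicitly addressed by conditions (1)--(5). To conclude that each factor is a $p$-adic unit via Corollary \ref{cor ECF mulambda}, this must either be imposed as an implicit standing hypothesis (as is customary in Euler characteristic computations) or be extracted from the other data, for instance via Kato's divisibility in the Iwasawa main conjecture in the corank-one regime; this is the step where the most care will be required.
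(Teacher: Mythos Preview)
Your approach is essentially the paper's: verify the three hypotheses of Theorem~\ref{main thm 2} via the Selmer decomposition of Proposition~\ref{prop:selmer-decomposition-ord} and the Euler characteristic criterion of Corollary~\ref{cor ECF mulambda}, then invoke that theorem. The paper is terser than you are---it does not spell out the case analysis for \eqref{hyp:Na} (in particular the inert-prime identity $\#\widetilde{E}(\F_{p^2})=\#\widetilde{E}(\F_p)\cdot\#\widetilde{E}^{(K)}(\F_p)$), and it likewise cites Corollary~\ref{cor ECF mulambda} without isolating the $\Sha[p^\infty]$ factor you correctly flag; the one small step the paper adds and you omit is the observation (via the formal group) that condition~(3) forces $E(\Q)[p]=E^{(K)}(\Q)[p]=0$, which is a hypothesis of Theorem~\ref{th ECF}.
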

\begin{proof}
    Note that the Selmer groups $\op{Sel}_{p^\infty}(E/\Q_{\op{cyc}})$ and $\op{Sel}_{p^\infty}(E^{(K)}/\Q_{\op{cyc}})$ are cofinitely generated and cotorsion over $\Lambda_{\op{cyc}}$. The kernel of the reduction map \[E(\Q_p)\rightarrow \widetilde{E}(\F_p)\]
    consists of the $\Z_p$-points on the formal group of $E$ at $p$, and is in particular, torsion free. Condition (3) thus implies in particular that $E(\Q_p)[p]=0$ and $E^{(K)}(\Q_p)[p]=0$. Thus, we find that $E(\Q)[p]=0$ and $E^{(K)}(\Q)[p]=0$. It follows from Corollary \ref{cor ECF mulambda} that 
    \[\begin{split}& \mu_p(E/\Q_{\op{cyc}})=\mu_p(E^{(K)}/\Q_{\op{cyc}})=0,\\
& \lambda_p(E/\Q_{\op{cyc}})=1\text{ and }\lambda_p(E^{(K)}/\Q_{\op{cyc}})=0.\end{split}\]
From the discussion in the above lines, it follows that 
\[\mu_p(E/K_{\op{cyc}})=0\text{ and } \lambda_p(E/K_{\op{cyc}})=1.\]
The assertions thus follow from Proposition \ref{main thm 2}.
\end{proof}

\par We conclude this section by illustrating Theorems \ref{main thm 1}, \ref{main thm 2} and \ref{main thm 3}.
\begin{example}\label{example 4.15}
Consider the curve with LMFDB level 58.a1 and the prime $p=17$. The pair $(E,p)$ has the following properties:
\begin{itemize}
    \item the Tamagawa product is equal to $2$.
    \item $E/\Q$ is of rank $1$
    \item the analytic rank is $1$
    \item the $p$-adic regulator is coprime to $p$
    \item $a_p=-4$ is not congruent to $1$ modulo $p$ 
    \item all mod $l$ Galois representations have maximal image
\end{itemize}


Consider now the twist $E'=E^{(-1)}$ by $-1$ and set $K:=\Q(\sqrt{-1})$.  Then $E'$ is the curve with label 464.f1. The pair $(E',p)$ has the properties
\begin{itemize}
    \item the Tamagawa product is equal to $2$.
    \item $E/\Q$ is of rank $0$
    \item the analytic rank is $0$
    \item the $p$-adic regulator is coprime to $p$
    \item $a_p=-4$ is not congruent to $1$ modulo $p$
    \item $\mu_{17}(E'/\Q_{\op{cyc}})=\lambda_{17}(E'/\Q_{\op{cyc}})=0$
\end{itemize}

Thus, the conditions required by Theorems \ref{main thm 1}, \ref{main thm 2} and \ref{main thm 3} are all satisfied. This gives rise to many $\Z_{17}$-extensions of $K$ for which Hilbert's tenth problem has a negative answer.

\end{example}
\begin{example}\label{example 4.16}
Consider the curve with label $61.a1$ and $p=11$. Then we have 
\begin{itemize}
    \item the Tamagawa product is equal to $1$.
    \item $E/\Q$ is of rank $1$
    \item the analytic rank is $1$
    \item the $p$-adic regulator is coprime to $p$
    \item $a_p=-5$ is not congruent to $1$ modulo $p$
\end{itemize}
If we twist this curve by $-3$ we obtain the curve with label 549.c1. Setting $E':=E^{(-3)}$, we find that the pair $(E',p)$ has the properties
\begin{itemize}
    \item the Tamagawa product is equal to $2$.
    \item $E'/\Q$ is of rank $0$
    \item the analytic rank is $0$
    \item the $p$-adic regulator is coprime to $p$
    \item $a_p=5$ is not congruent to $1$ modulo $p$
    \item $\mu_{11}(E'/\Q_{\op{cyc}})=\lambda_{11}(E'/\Q_{\op{cyc}})=0$.
\end{itemize}
Theorems \ref{main thm 1}, \ref{main thm 2} and \ref{main thm 3} thus apply in this case for $p=11$ and $K:=\Q(\sqrt{-3})$. This gives rise to many $\Z_{11}$-extensions of $K$ for which Hilbert's tenth problem has a negative answer.
\end{example}

\section{Density results}\label{s 5}
\par In this section, we prove that the conditions of Theorems \ref{main thm 1}, \ref{main thm 2} and \ref{main thm 3} are satisfied for a large family of cases. In this section, we prove Theorem \ref{lastthm}. The idea thus is to fix $(E,p)$ and vary $d$ over negative squarefree integers. Recall that $E^{(d)}:=E^{(K_d)}$, where $K_d:=\Q(\sqrt{d})$. Thus, we assume that the following conditions are satisfied for $E$:
\begin{enumerate}
    \item $E$ has good ordinary reduction at $p$,
   \item $\op{corank}_{\Z_p} \op{Sel}_{p^\infty}(E/\Q)=1$
        \item the prime $p$ is non-anomalous for $E$;
        \item $p\nmid \prod_{\ell\neq p} c_\ell(E/\Q)$;
        \item the normalized $p$-adic regulator of $E$ over $\Q$ is a $p$-adic unit.
\end{enumerate}

These conditions imply that $\op{Sel}_{p^\infty}(E/\Q_{\op{cyc}})$ is cotorsion over $\Lambda_{\op{cyc}}$ and moreover, the Iwasawa invariants are given by 
\[\mu_p(E/\Q_{\op{cyc}})=0\text{ and }\lambda_p(E/K_{\op{cyc}})=1.\] This follows from an of the formula for the truncated Euler characteristic, cf. Corollary \ref{cor ECF mulambda}. As $d$ ranges over all negative squarefree integers, $K_d$ ranges over all imaginary quadratic fields. We wish to show that there is a positive proportion of such values $d$ for which $(K_d, p)$ satisfies \eqref{hyp:h10gen}. This follows as a consequence of Theorem \ref{main thm 3} by showing that there is a positive density set of $d$ for which the following assertions hold for $E^{(d)}$
\begin{enumerate}
    \item $E^{(d)}$ has good ordinary reduction at $p$, 
    \item $\op{corank}_{\Z_p} \op{Sel}_{p^\infty}(E^{(d)}/\Q)=0$, 
    \item $\widetilde{E}^{(d)}(\F_p)[p]=0$, 
    \item $p\nmid \prod_{\ell\neq p} c_\ell(E^{(d)}/\Q)$.
\end{enumerate}

\subsection{Density results via the method of Kriz and Li}

First, we apply the results of Kriz and Li \cite{kriz-li} to obtain our density theorem for primes $p=11, 13, 31, 37$. This method works for any odd prime number, however, there are additional conditions that we are required to impose on $E$. 

\begin{lemma}
\label{lem:density-kriz-li}
    Let $E_{/\Q}$ be an elliptic curve, $p$ an odd prime  such that $E(\mathbb{Q})[2]=\{0\}$. Let $N$ be the conductor of $E$ and let $K_0$ be an imaginary quadratic field.. Assume that the following conditions hold
    \begin{enumerate}
        \item $\op{corank}_{\Z_p} \op{Sel}_{p^\infty}(E/\Q)=1$
        \item $E$ has good ordinary reduction at $p$; 
        \item the prime $p$ is non-anomalous for $E$;
        \item $p\nmid \prod_{\ell\neq p} c_\ell(E/\Q)$;
        \item the normalized $p$-adic regulator of $E$ over $\Q$ is a $p$-adic unit. 
        \item $2$ splits in $K_0$.
        \item all primes dividing $pN$ split in $K_0$
        \item Let $P$ be the Heegner point for $E/K_0$. Let $E^{ns}(\mathbb{F}_2)$ be the set of non-singular points of $E$ over $\mathbb{F}_2$ and let $\log_{\omega_E}$ be the differential associated to $E$ as a $p$-adic modular form. Then
        \[\frac{\vert E^{ns}(\mathbb{F}_2)\vert \log_{\omega_E}(P)}{2}\not\equiv 0\pmod 2\]
    \end{enumerate}
    Let $S$ be the set of primes $\ell$ coprime to $2\op{cond}(E)$ such that
    \begin{enumerate}
        \item $\ell$ splits in $K_0$
        \item $\ell$ is a square modulo every prime dividing $pN$. 
        \item $\ell\equiv 1\pmod 4$
        \item $\op{frob}_l\in \op{Gal}(\mathbb{Q}(E[2])/\mathbb{Q})$ has order $3$. 
    \end{enumerate} Let $E'$ be the twist of $E$ by $dd_{K_0}$, where $d>0$ is square free and only divisible by primes in $S$. Then $(E,E',p)$ satisfies the conditions of Theorems \ref{main thm 1}, \ref{main thm 2} and \ref{main thm 3}. In particular, \eqref{hyp:h10gen} is satisfied for $K=\Q(\sqrt{dd_{K_0})}$. 
    
\end{lemma}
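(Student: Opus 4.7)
My plan is to reduce the statement to checking the five hypotheses of Theorem \ref{main thm 3} for the pair $(E, E')$ with $E' := E^{(dd_{K_0})}$ and $K := \Q(\sqrt{dd_{K_0}})$; the conclusion \eqref{hyp:h10gen} then follows immediately from that theorem. Among those five hypotheses, the ones concerning $E$ alone (good ordinary reduction at $p$, non-anomalousness, the Tamagawa product, the normalized regulator, and $\op{corank}_{\Z_p}\op{Sel}_{p^\infty}(E/\Q) = 1$) are exactly the hypotheses of the present lemma. What remains is (i) to verify the analogous local conditions for the twist $E'$, and (ii) to establish $\op{corank}_{\Z_p}\op{Sel}_{p^\infty}(E'/\Q) = 0$.

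For step (i), I would carefully track how the quadratic twist by $dd_{K_0}$ interacts with the arithmetic data. Since $p$ splits in $K_0$ and the primes $\ell \in S$ are coprime to $2N$, we may arrange $p \nmid dd_{K_0}$, so that $E'$ has good reduction at $p$. Using that each $\ell \in S$ is a quadratic residue modulo $p$, that $\ell \equiv 1 \pmod{4}$, and that $\chi_{K_0}(p) = 1$, quadratic reciprocity forces $\chi_{dd_{K_0}}(p) = 1$, whence $a_p(E') = a_p(E)$; this simultaneously gives ordinary reduction and non-anomalousness for $E'$ at $p$. For the Tamagawa product, at odd primes $\ell \mid N$ hypotheses (6), (7) on $K_0$ and (2) on $S$ force $dd_{K_0}$ to be a local square, so that $E'_{/\Q_\ell} \simeq E_{/\Q_\ell}$ and $c_\ell(E') = c_\ell(E)$ is coprime to $p$; the remaining bad primes of $E'$ divide $dd_{K_0}$ but not $N$, are odd (since $2$ splits in $K_0$ and each $\ell \in S$ satisfies $\ell \equiv 1 \pmod{4}$), and give additive reduction obtained by twisting good reduction, whose Tamagawa numbers lie in $\{1,2,4\}$ and are therefore coprime to the odd prime $p$. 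If $2 \mid N$, the prime $2$ requires a separate but standard local check, which is straightforward for $p \geq 5$.

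Step (ii) is the main step, and here I would appeal to the twist theorem of Kriz--Li \cite{kriz-li}. The conditions placed on $S$ -- splitting in $K_0$, being a quadratic residue modulo each prime dividing $pN$, $\ell \equiv 1 \pmod{4}$, and $\op{frob}_\ell$ of order $3$ in $\op{Gal}(\Q(E[2])/\Q)$ -- together with the $2$-adic input from hypothesis (8) on $\lvert E^{ns}(\mathbb{F}_2)\rvert \log_{\omega_E}(P)/2$, are arranged precisely to match the hypotheses of the Kriz--Li twist theorem. That theorem produces a large family of squarefree positive $d$ supported in $S$ for which $E^{(dd_{K_0})}$ has analytic rank $0$ and the $p$-part of the BSD conjecture holds; in particular $\op{Sel}_{p^\infty}(E'/\Q)$ is finite, so its $\Z_p$-corank is $0$.

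The main obstacle I anticipate is not the local bookkeeping in step (i), which is routine once the hypotheses are unpacked, but the precise matching in step (ii) between the conditions we impose on $K_0$ and $S$ and the exact form of the Kriz--Li twist theorem being invoked -- in particular, confirming that their conclusion not only lowers the analytic rank but also delivers $p$-Selmer corank zero. Once this alignment is in place, Theorem \ref{main thm 3} applies to the pair $(E, E')$ and yields \eqref{hyp:h10gen} for the field $K = \Q(\sqrt{dd_{K_0}})$.
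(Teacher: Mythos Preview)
Your approach is essentially the same as the paper's: reduce to the hypotheses of Theorem~\ref{main thm 3}, verify the local conditions for $E'$ at $p$ via $\chi_{dd_{K_0}}(p)=1$ (so $E'\simeq E$ over $\Q_p$), handle the Tamagawa product by noting that at primes dividing $N$ the twist is locally trivial and at the new bad primes one gets additive reduction with $c_\ell\le 4$, and invoke Kriz--Li for the corank-zero statement. Two minor sharpenings: (a) the Kriz--Li theorem being used (their Theorem~4.3) applies to \emph{every} such $d$, not merely a large family, so the lemma's conclusion holds for all $d$ as stated; (b) the obstacle you anticipate in step~(ii) is resolved not by a $p$-adic BSD statement from Kriz--Li, but by the classical route---Kriz--Li gives analytic rank~$0$, and then Gross--Zagier/Kolyvagin yields algebraic rank~$0$ with finite $\Sh$, whence $\op{corank}_{\Z_p}\op{Sel}_{p^\infty}(E'/\Q)=0$.
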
 
\begin{proof}
We need to check the assumptions of Theorem \ref{main thm 3}. As $\ell$ is equivalent to $1$ modulo $4$ we see that $-N$ is a square modulo $d$ and we obtain from \cite[Theorem 4.3]{kriz-li} that $E'$ has algebraic and analytic rank zero. Then the result by Gross-Zagier and Kolyvagin \cite{gross-zagier}\cite{kolyvagin} (see also \cite[Theorem 2.4]{burungale}) implies that $\sha(E'/\Q)$ is finite and we obtain condition (1) of Theorem \ref{main thm 3}. As $dd_{K_0}$ is a square modulo $p$ we see that the curves $E$ and $E'$ are isomorphic over $\mathbb{Q}_p$, which implies conditions (2) and (3). Note that the all the primes of bad reduction split in $\mathbb{Q}(\sqrt{dd_K})$. Thus, the Tamagawa factors at these primes remain the same. At all primes dividing $dd_K$, $E$ has good reduction and $E'$ has additive reduction. Thus, the Tamagawa factors at these primes are bounded by $4$. As $p$ is odd, the whole Tamagawa-product is not divisible by $p$, which proves condition (4). The regulator condition (5) does not depend on $E'$ and is therefore trivially satisfied. 
\end{proof}
\begin{lemma}\label{computation-density} Let $k$ be the number of distinct prime factors dividing $N$. Let $\mathfrak{d}(S)$ denote the Dirichlet density of $S$. If \[\op{Gal}(\Q(E[2])/\Q)\cong \Z/3\Z,\] we have that
    \[\mathfrak{d}(S)=\begin{cases}
        \frac{2}{3}\frac{1}{2^{k+2}} & \text{ if }K_0\neq \Q(\sqrt{-1});\\
        \frac{2}{3}\frac{1}{2^{k+1}} & \text{ if }K_0=\Q(\sqrt{-1}).
    \end{cases}\]

    If $\op{Gal}(\Q(E[2])/\Q)\cong S_3$,  we obtain a density of $\mathfrak{d}(S)=\frac{1}{3}\frac{1}{2^{k+1}}$.
\end{lemma}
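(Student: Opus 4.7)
The proof I propose proceeds by the Chebotarev density theorem. I would first translate each of the four defining conditions of $S$ into a Frobenius condition in a suitable finite Galois extension of $\Q$. Conditions (1) and (3) say that $\op{frob}_\ell$ is trivial in $\op{Gal}(K_0/\Q)$ and $\op{Gal}(\Q(i)/\Q)$ respectively. Condition (2), that $\ell$ is a square modulo every prime $q\mid pN$, says that $\op{frob}_\ell$ is trivial in $\Q(\sqrt{q^*})$ for each odd prime $q\mid pN$, with $q^{*}=(-1)^{(q-1)/2}q$ (the condition being vacuous at $q=2$). Condition (4) is a conjugacy class condition in $\op{Gal}(\Q(E[2])/\Q)$. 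Setting $L$ to be the compositum of the quadratic fields arising from (1)--(3), and $M=L\cdot \Q(E[2])$, the density of $S$ equals the proportion of $\op{Gal}(M/\Q)$ consisting of elements trivial on $L$ and of order $3$ on $\Q(E[2])$.

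The first main computation is $[L:\Q]$. The field $L$ is multiquadratic, so its degree is $2^{r}$ where $r$ is the $\F_2$-dimension of the subgroup of $\Q^{\times}/(\Q^{\times})^{2}$ generated by $[-1]$, $[d_{K_0}]$, and the classes $[q^{*}]$ for $q$ an odd prime dividing $pN$. The splitting hypotheses of Lemma \ref{lem:density-kriz-li} --- that $2$ and every prime of $pN$ split in $K_0$ --- force $d_{K_0}$ to be odd and coprime to $pN$. In the generic case this makes $[d_{K_0}]$ linearly independent of $[-1]$ and of the $[q^{*}]$, and a careful bookkeeping produces $r=k+2$. The one coincidence is $[d_{K_0}]=[-1]$ when $K_0=\Q(\sqrt{-1})$, which drops $r$ by one to $k+1$.

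The second main step is the Galois condition on $\Q(E[2])$, treated by case analysis. If $\op{Gal}(\Q(E[2])/\Q)\cong \Z/3\Z$, then $\Q(E[2])$ is a cyclic cubic extension, linearly disjoint from the multiquadratic $L$; hence $\op{Gal}(M/\Q)\cong \op{Gal}(L/\Q)\times \Z/3\Z$, and the two order-$3$ elements of $\Z/3\Z$ contribute a factor of $\tfrac{2}{3}$ for a density of $\tfrac{2}{3[L:\Q]}$. If instead $\op{Gal}(\Q(E[2])/\Q)\cong S_3$, then the unique quadratic subfield of $\Q(E[2])$ is $\Q(\sqrt{\Delta_E})$, where the discriminant $\Delta_E$ has prime support contained in the primes of bad reduction of $E$, hence dividing $N$. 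Because $L$ contains both $i$ and $\sqrt{q^{*}}$ for every odd $q\mid N$, it contains $\sqrt{q}$ itself and therefore also $\sqrt{\Delta_E}$. Consequently $L\cap \Q(E[2])=\Q(\sqrt{\Delta_E})$ and $[M:\Q]=3[L:\Q]$; the requirement that $\op{frob}_\ell$ be trivial on $L$ automatically forces its restriction to $\Q(E[2])$ into the $\Z/3\Z$-subgroup of $S_3$, and demanding order exactly $3$ excludes only the identity, again leaving $2$ allowed elements and producing density $\tfrac{2}{3[L:\Q]}$.

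Substituting $[L:\Q]=2^{k+2}$ in the generic case and $[L:\Q]=2^{k+1}$ when $K_0=\Q(\sqrt{-1})$ yields the stated formulas. The most delicate portion of the argument is the linear-algebra bookkeeping in the computation of $[L:\Q]$ and the verification that $\sqrt{\Delta_E}\in L$ in the $S_3$ case; both rest essentially on the splitting hypotheses imposed on $K_0$ in the preceding lemma and on keeping careful track of the behaviour at the prime $2$ and at the possible prime factor $2$ of the conductor $N$.
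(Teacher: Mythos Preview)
Your approach coincides with the paper's: encode conditions (1)--(3) as total splitting in a multiquadratic field $L$, then impose the order-$3$ Frobenius condition in $\Q(E[2])$ via Chebotarev, using linear disjointness in the $\Z/3\Z$ case and the containment $\Q(\sqrt{\Delta_E})\subset L$ in the $S_3$ case. One small discrepancy: the paper builds $L=K_0(\sqrt{-1},\sqrt{q_1},\dots,\sqrt{q_k})$ from the prime divisors $q_i$ of $N$ only (not of $pN$ as you write), and since $\sqrt{-1}\in L$ the choice between $\sqrt{q_i}$ and $\sqrt{q_i^{*}}$ is immaterial; it is this choice that makes the degree come out to $2^{k+2}$, whereas your inclusion of $p$ as an extra independent generator would shift the count by one.
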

\begin{proof} \par We begin by considering the case when $\op{Gal}(\Q(E[2]/\Q)\cong \Z/3\Z$. Let $L=K_0(\sqrt{-1},\sqrt{q_1},\dots,\sqrt{q_k})$, where the $q_i$ are the prime factors of $N$. As all the primes dividing $N$ are split in $K$, we see that $d_{K_0}$ is coprime to $q_i$ for $1\le i\le k$. Thus, $[L:\Q]=\frac{1}{2^{k+1}}$ if $\sqrt{-1}\in K$ and $[L:\Q]=\frac{1}{2^{k+2}}$ otherwise. Therefore the set of primes that are totally split in $L/\Q$ has density $\frac{1}{2^{k+1}}$ if $K=\Q(\sqrt{-1})$ and equal to $\frac{1}{2^{k+2}}$ otherwise. The density of the primes having Frobenius of order $3$ in $\op{Gal}(\Q(E[2])/\Q)$  is equal to $\frac{2}{3}$. As $L\cap \Q(E[2])=\Q$, the Tchebotarev density of $S$ is just the product of the two densities we computed which gives the desired result.
    
    \par It remains to consider the case that $\op{Gal}(\Q(E[2])/\Q)\cong S_3$. The extension $\Q(E[2])$ contains the quadratic extension $\Q(\sqrt{\Delta(E)})$, where $\Delta(E)$ is the discriminant of $E$. As $\Q(\sqrt{\Delta(E)})\subset \Q(\sqrt{-1},\sqrt{q_1},\dots ,\sqrt{q_k})$, we see that every prime that splits in $\Q(\sqrt{-1},\sqrt{q_1},\dots ,\sqrt{q_k})$, also splits in $\Q(\sqrt{\Delta})/\Q$. In particular, the Frobenius in $\op{Gal}(\Q(E[2])/\Q)$ is either trivial or of order $3$. Thus, the density of $S$ is equal to $\frac{1}{3\cdot2^{k+1}}$
\end{proof}
\begin{remark}
    Kriz and Li, give a list of imaginary quadratic fields and elliptic curves satisfying (1), (6), (8) and (7) for all primes that divide $N$. Then there is a positive density set of primes $p$ that split in $K$ and satisfy (2) (3) and (4). Only the condition (5) needs to be checked separately. 
\end{remark}

\begin{example}\label{important example}
Consider the curve $E$ with Cremona label 37a1 and the imaginary quadratic field $K_0=\Q(\sqrt{-7})$. We are looking for a prime $p$ such that $(E,K_0,p)$ satisfies the conditions of Lemma \ref{lem:density-kriz-li}. As $E/\Q$ has trivial torsion the condition $E(\Q)[2]=0$ is satisfied. Furthermore $E$ has rank $1$ over $\Q$ such that condition (1) is satisfied. As $2$ splits in $K_0$ condition (6) is satisfied. The only prime dividing $N$ is $37$ which splits in $K_0$. Kriz and Li checked that condition (8) is satified (see \cite[Example 6.1]{kriz-li}). Thus, we need to find a prime $p$ with the following properties
\begin{itemize}
    \item $E$ has good ordinary reduction at $p$
    \item $p$ is non-anomalous for $E$
    \item $p\nmid \prod_{l\neq p}c_l(E/\Q)$.
    \item the normalized $p$-adic regulator of $E$ over $\Q$ is a $p$-adic unit.
    \item $p$ splits in $K_0$.
\end{itemize}

The smallest prime $p\ge 5$ that splits in $K_0$ is $11$. It follows from the LMFDB data that $p=11$ also satisfies all the other conditions. Thus, the triple $(E,K_0,p)$ satisfies indeed all assumptions of Theorem \ref{lem:density-kriz-li}.  The first few primes in the set $S$ are $\{53,149,337,373,613\}$

Kriz and Li provide a table of pairs $(E,K_0)$ satisfying conditions (1),(6), (7) only for primes dividing $N$ and (8). The table below records these pairs of elliptic curves and auxiliary imaginary quadratic fields $K_0$. The third column lists one prime such that the triple $(E,K_0,p)$ satisfies conditions (1)-(8) of Theorem \ref{lem:density-kriz-li}.
(compare also with the list in \cite[Example 6.1]{kriz-li}).
\newline
   \[ \begin{tabular}{c|c|c}
        E&$ d_{K_0}$ &p \\\hline
         37a1&-7&11\\
         43a1&-7&11\\
         88a1&-7&37\\
         91a1&-55&31\\
         92b1&-7&11\\
         123a1&-23&13\\
         123b1&-23&13\\
         131a1&-23&13\\
         141a1&-23&13\\
         141d1&-23&13\\
         148a1&-7&11\\
         
    \end{tabular}
    \]
\end{example}

\begin{theorem}\label{lastthm1}
 For primes $p=11, 13, 31, 37$, \eqref{hyp:h10gen} is satisfied for a positive density of imaginary quadratic fields $K$.
\end{theorem}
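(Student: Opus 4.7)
The plan is, for each prime $p\in\{11,13,31,37\}$, to select a triple $(E,K_0,p)$ from the table of Example \ref{important example}, verify the hypotheses of Lemma \ref{lem:density-kriz-li}, and then use Lemma \ref{computation-density} to pin down an explicit positive density of imaginary quadratic fields for which \eqref{hyp:h10gen} holds.

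First I would check the hypotheses of Lemma \ref{lem:density-kriz-li} for suitable triples: $(37a1,\Q(\sqrt{-7}),11)$ for $p=11$, e.g.\ $(123a1,\Q(\sqrt{-23}),13)$ for $p=13$, $(91a1,\Q(\sqrt{-55}),31)$ for $p=31$, and $(88a1,\Q(\sqrt{-7}),37)$ for $p=37$. The arithmetic hypotheses (1), (6), the splitting in (7) at primes dividing $N$, and the Heegner-point condition (8) have been verified in \cite{kriz-li} for these rows, so one only needs to confirm the local conditions (2)--(5) at $p$---good ordinary reduction, non-anomalous prime, Tamagawa product coprime to $p$, and $p$-adic regulator a unit. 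These are directly verifiable using LMFDB data as indicated in Example \ref{important example}.

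With the triple in hand, Lemma \ref{lem:density-kriz-li} produces a set of auxiliary primes $S$ such that for every squarefree positive integer $d$ all of whose prime factors lie in $S$, the field $K=\Q(\sqrt{dd_{K_0}})$ satisfies \eqref{hyp:h10gen} via Theorem \ref{main thm 3}. Lemma \ref{computation-density} then shows $\mathfrak{d}(S)>0$ explicitly, with the value taking one of the three prescribed forms depending on whether $\op{Gal}(\Q(E[2])/\Q)$ is cyclic of order $3$ or equal to $S_3$, and on whether $K_0=\Q(\sqrt{-1})$; this is a short Chebotarev computation on the compositum of $\Q(E[2])$ with an explicit multiquadratic field.

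The main obstacle is the final step: converting positive Dirichlet density of $S$ into a positive density statement for the resulting family of imaginary quadratic fields ordered by height. A direct count of squarefree $d\le X$ supported on $S$ is of size $\asymp X(\log X)^{\mathfrak{d}(S)-1}$ by the Landau--Selberg--Delange method, which has natural density zero whenever $\mathfrak{d}(S)<1$. The density statement must therefore be interpreted with respect to an appropriate Dirichlet-style density on the set of imaginary quadratic fields, exploiting that the map $d\mapsto\Q(\sqrt{dd_{K_0}})$ is injective on squarefree $d$ coprime to $d_{K_0}$. Once this is set up, positivity follows from Lemma \ref{computation-density}, and assembling the four triples (one per prime in $\{11,13,31,37\}$) completes the argument.
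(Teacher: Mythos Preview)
Your approach is essentially identical to the paper's: the paper's entire proof reads ``For all triples in the table considered in Example \ref{important example}, the conditions of Lemma \ref{lem:density-kriz-li} are satisfied. Thus, the result follows.'' It does not invoke Lemma \ref{computation-density} explicitly nor engage with the natural-versus-Dirichlet density issue you raise in your final paragraph; that subtlety is simply not addressed, so your concern there is a genuine observation about a gap (or an implicit convention) that the paper itself leaves open rather than a divergence from its argument.
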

\begin{proof}
    For all triples in the table considered in Example \ref{important example}, the conditions of Lemma \ref{lem:density-kriz-li} are satisfied. Thus, the result follows. 
\end{proof}

Our computations lead us to make the following conjecture.
\begin{conjecture}
    For all primes large enough, there exists an elliptic curve $E/\Q$ and an imaginary quadratic field $K_0$ such that the triple $(E,K_0,p)$ satisfies the conditions of Lemma \ref{lem:density-kriz-li}.
\end{conjecture}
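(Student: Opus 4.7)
The task is to show that for every sufficiently large prime $p$, one can exhibit an elliptic curve $E/\Q$ (with $E(\Q)[2]=0$) and an imaginary quadratic field $K_0$ so that conditions (1)--(8) of Lemma \ref{lem:density-kriz-li} all hold at $p$. Since those conditions, with $E$ held fixed, individually exclude only density-zero sets of primes $p$, the natural plan is to let $(E,K_0)$ depend on $p$ and to search for a good $E$ inside a sufficiently rich family $\mathcal{F}$ of analytic rank-one elliptic curves over $\Q$.

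First, I would assemble an infinite family $\mathcal{F}$ of non-CM elliptic curves $E/\Q$ with $E(\Q)[2]=0$ and analytic rank one, for which finiteness of $\Sh(E/\Q)$ and $\op{corank}_{\Z_p}\op{Sel}_{p^\infty}(E/\Q)=1$ hold for almost every $p$. This is available through modularity, Gross--Zagier, Kolyvagin, and the converse main conjecture work of Skinner--Urban and Skinner. Combined with Heath-Brown-type rank-distribution statistics in quadratic-twist families, $\mathcal{F}$ is abundant and, by Serre's open-image theorem, has large residual Galois images. For a fixed $p$, condition (3) (good ordinary reduction) holds for a density-$1/2$ subfamily by Sato--Tate; (4) ($a_p\not\equiv 1\pmod p$) excludes only a density-zero subset by the Hasse bound and equidistribution; (5) (Tamagawa coprime to $p$) is automatic for all large $p$; and (6) (unit normalized $p$-adic regulator) is conjecturally generic.

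The decisive step is a \emph{uniform-in-$p$} sieve: for each fixed large $p$, show that the number of $E\in\mathcal{F}$ with $\op{cond}(E)\le X$ satisfying (1)--(6) at $p$ is bounded below by $c\cdot\#\{E\in\mathcal{F} : \op{cond}(E)\le X\}$ for some absolute $c>0$, as $X\to\infty$. Here I would invoke effective Chebotarev bounds of Lagarias--Odlyzko type, applied to the mod-$\ell$ Galois representations of curves in $\mathcal{F}$, to make the $p$-local requirements independent of $p$. Once some $E=E_p$ is secured, one chooses $K_0=\Q(\sqrt{-D})$ with $-D$ in a congruence class forcing $2$ and every prime of $p\cdot\op{cond}(E_p)$ to split; this is a finite intersection of positive-density congruence conditions, so infinitely many admissible $K_0$ exist by Gauss.

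The main obstacle is condition (8), the Heegner-point $2$-adic non-vanishing $|E^{\op{ns}}(\F_2)|\log_{\omega_E}(P)/2\not\equiv 0\pmod{2}$, which must be arranged simultaneously with the split-prime conditions on $K_0$ and uniformly in $p$. The plan is to exploit the remaining freedom in $-D$ within its admissible congruence class, combined with a refinement of the $2$-adic anticyclotomic non-vanishing techniques of Burungale--Skinner--Tian and Kriz--Li, to force some admissible $-D$ to witness the required non-triviality. This coupling of $2$-adic and $p$-adic information uniformly across an infinite family of pairs $(E_p,K_{0,p})$ is where genuine new input beyond the Iwasawa-theoretic framework of the present paper seems necessary, and where I expect the bulk of the work to lie.
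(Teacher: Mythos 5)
The statement you are trying to prove is not proved in the paper at all: it is stated as a \emph{conjecture}, and the authors offer only a heuristic justification (that each of conditions (1)--(5) should hold for a positive proportion of curves, and that (6)--(8) should then hold for a positive proportion of auxiliary fields $K_0$). So the relevant question is whether your proposal actually closes the gap that makes this a conjecture, and it does not. Your text is a research program rather than a proof: at every point where the real difficulty lies, you substitute an expectation or an appeal to ``genuine new input''. Most visibly, condition (5) of Lemma \ref{lem:density-kriz-li} (that the normalized $p$-adic regulator is a $p$-adic unit) is dismissed as ``conjecturally generic''; but even the non-vanishing of the $p$-adic regulator is an open conjecture of Schneider, cited as such in the paper, and no known technique gives unit-ness for even a single curve uniformly in $p$, let alone for a positive-density subfamily for every large $p$. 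Similarly, the ``uniform-in-$p$ sieve'' you invoke is asserted, not constructed: Sato--Tate and effective Chebotarev control ordinariness and $a_p\not\equiv 1\pmod p$ for a fixed $p$ across a family, but they say nothing about the Selmer corank, the $p$-part of $\Sh$, or the $p$-adic regulator, which are exactly the conditions (1) and (5) that must be verified.

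The second unresolved point, which you yourself flag, is condition (8): the $2$-indivisibility of $|E^{\mathrm{ns}}(\F_2)|\log_{\omega_E}(P)/2$ for the Heegner point $P$, arranged compatibly with the splitting conditions (6)--(7) on $K_0$ and simultaneously for every large $p$. Saying that this ``is where genuine new input \ldots seems necessary, and where I expect the bulk of the work to lie'' is an accurate diagnosis, but it concedes that the argument is incomplete precisely at the step that distinguishes a theorem from the paper's conjecture. In short: the paper leaves this statement open, your proposal reproduces (with more machinery) the same heuristic the authors give after the conjecture, and the decisive steps --- conditions (5) and (8) uniformly in $p$, together with a genuine lower-bound sieve for (1) --- remain unproven. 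There is therefore a genuine gap; no proof of the statement is currently available.
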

Indeed, given an odd prime number $p$, one expects that each of the conditions (1)--(5) be satisfied by a positive density of elliptic curves (ordered by their height \cite{Brumerinventiones}). Given an elliptic curve $E$ which satisfies (1)--(5), it is natural to expect that each of the conditions (6)--(8) be satisfied by a positive density of imaginary quadratic fields $K_0$. 

In applications it can be cumbersome to check the Heegner point conditions in Lemma \ref{lem:density-kriz-li}. If we work with elliptic curves that admit a $3$ isogeny, we obtain the follwing result. 
\begin{theorem}
\label{thm:3-kriz-li}
    Let $E$ be a semistable elliptic curve of conductor $N$  that admits a rational $3$ isogeny. Assume that $E$ has rank $1$ and let $p>3$ be a prime such that 
    \begin{enumerate}
         \item $\op{corank}_{\Z_p} \op{Sel}_{p^\infty}(E/\Q)=1$
        \item $E$ has good ordinary reduction at $p$; 
        \item the prime $p$ is non-anomalous for $E$;
        \item $p\nmid \prod_{\ell\neq p} c_\ell(E/\Q)$;
        \item the normalized $p$-adic regulator of $E$ over $\Q$ is a $p$-adic unit. 
    \end{enumerate}
    Let \[r=\begin{cases} 0 & \text{ if }N\text{ is odd},\\
    2 & \text{ otherwise.}
    \end{cases}\]
    \[\delta=\begin{cases} 0 & \text{ if }3|N,\\
    1 & \text{ otherwise.}
    \end{cases}\]
    Let $k$ be the number of prime factors of $N$. For any prime $\ell$ let \[q_\ell=\begin{cases} \ell & \text{ if }\ell\text{ is odd},\\
    4 & \text{ otherwise.}
    \end{cases}\]
    Then there is a set $S$ of fundamental discriminants $d<0$ of density at least
    \[\frac{1}{3\cdot 2^r}\frac{1}{2^{k-\delta}}\prod_{\ell\mid N,\ell\neq 3}\frac{q_\ell}{\ell+1}\]
    such that $(\Q(\sqrt{d}),p)$ satisfies \eqref{hyp:h10gen} for all $d\in S$.
\end{theorem}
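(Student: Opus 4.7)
The plan is to reduce the theorem to an application of Theorem \ref{main thm 3} for a positive density of quadratic twists $E^{(d)}$. Conditions (1)--(5) of the present statement give conditions (1)--(5) of Theorem \ref{main thm 3} for the curve $E$ itself, so what remains is to arrange, for a positive density of fundamental discriminants $d<0$, that the twist $E^{(d)}$ satisfies the parallel list of hypotheses: (i) $\op{corank}_{\Z_p}\op{Sel}_{p^\infty}(E^{(d)}/\Q)=0$, (ii) good ordinary reduction at $p$, (iii) $\widetilde{E}^{(d)}(\F_p)[p]=0$, and (iv) $p\nmid\prod_{\ell\neq p}c_\ell(E^{(d)}/\Q)$.

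The central input is the density theorem of Bhargava--Klagsbrun--Oliver--Shnidman \cite{BKOS}: because $E$ admits a rational $3$-isogeny, a positive proportion of its quadratic twists have vanishing $3$-Selmer group. The BKOS framework expresses this proportion as a product of explicit local densities at primes of bad reduction, which since $E$ is semistable are all multiplicative primes. Combined with the theorems of Kato and Skinner--Urban in rank $0$, vanishing of the $3$-Selmer group implies finiteness of both $E^{(d)}(\Q)$ and $\sha(E^{(d)}/\Q)[p^\infty]$, so (i) holds. The global factor $\tfrac{1}{3}$ and the local contributions $\tfrac{q_\ell}{\ell+1}$ at $\ell\mid N$, $\ell\neq 3$, in the stated density are extracted directly from the BKOS formula, with the conditions $\delta,r$ tracking the separate treatment at $3$ and $2$.

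Next I would intersect the BKOS set with the following local congruence conditions, each cutting out a positive-density set of squarefree $d$: at the prime $p$, require $d$ to be a nonzero quadratic residue modulo $p$; at each odd prime $\ell\mid N$ with $\ell\neq 3$, require $d$ to be a quadratic residue modulo $\ell$; at $2$ (when $2\mid N$) impose the analogous $2$-adic condition absorbed into the factor $1/2^r$. The first condition forces $E^{(d)}\simeq E$ over $\Q_p$, which transfers (ii) and (iii) verbatim from the assumptions on $E$. The conditions at $\ell\mid N$, $\ell\neq 3$ ensure that $E^{(d)}$ has the same Kodaira type at $\ell$ as $E$, hence $c_\ell(E^{(d)}/\Q)=c_\ell(E/\Q)$, which by assumption is coprime to $p$. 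At auxiliary primes $\ell\mid d$ outside $pN$, the twist acquires additive reduction, but its Tamagawa number is at most $4$ and hence coprime to $p$ since $p>3$; this establishes (iv).

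The main technical obstacle is checking that the local conditions imposed above are compatible with, and independent from, the local conditions already incorporated in the BKOS density expression, so that their intersection still has the claimed positive density. Once this independence is established, a standard sieve argument combining Tchebotarev density at $p$, quadratic reciprocity at the primes $\ell\mid N$ with $\ell\neq 3$ (contributing the factor $1/2^{k-\delta}$), and the BKOS local factors at primes of bad reduction yields the lower bound
\[
\frac{1}{3\cdot 2^r}\cdot\frac{1}{2^{k-\delta}}\cdot\prod_{\ell\mid N,\,\ell\neq 3}\frac{q_\ell}{\ell+1}.
\]
For every $d$ in the resulting set, Theorem \ref{main thm 3} applied to the pair $(E,E^{(d)})$ produces a line $(a_0,b_0)\in\mathbb{P}^1(\F_p)$ witnessing \eqref{hyp:h10gen} for $K=\Q(\sqrt{d})$, which completes the argument.
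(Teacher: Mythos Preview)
Your approach diverges from the paper's in a way that introduces a real gap. The paper does \emph{not} use BKOS here; it applies Kriz--Li \cite[Theorem 9.5]{kriz-li} to the triple $(N_1,N_2,1)$ with $N_2=pN_2'$ (so that $p$ is forced to split in $\Q(\sqrt{d})$), obtains a set $S$ of fundamental discriminants of exactly the stated density, and shows as in \cite[Theorem 9.7]{kriz-li} that $E^{(d)}$ has \emph{analytic} rank $0$ for $d\in S$. Gross--Zagier--Kolyvagin then gives $\op{corank}_{\Z_p}\op{Sel}_{p^\infty}(E^{(d)}/\Q)=0$, and the Tamagawa condition comes from \cite{variations-tamagawa}. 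The explicit density with the factors $\tfrac{1}{3\cdot 2^r}$, $\tfrac{1}{2^{k-\delta}}$, and $\prod_{\ell\mid N,\ell\neq 3}\tfrac{q_\ell}{\ell+1}$ is the Kriz--Li density, not a BKOS density.

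Your BKOS route has two problems. First, the deduction ``$\op{Sel}_3(E^{(d)})=0$ implies $\sha(E^{(d)}/\Q)[p^\infty]$ is finite'' via ``Kato and Skinner--Urban'' does not go through as stated: vanishing of the $3$-Selmer group gives algebraic rank $0$ and $\sha[3]=0$, but to reach $\op{corank}_{\Z_p}\op{Sel}_{p^\infty}=0$ for a \emph{different} prime $p>3$ you need analytic rank $0$ (to invoke Kato at $p$), and the converse step at $3$ is exactly where Skinner--Urban is delicate because $E[3]$ is reducible (the curve has a rational $3$-isogeny). Second, the density you write down is not what BKOS produces; the BKOS lower bound is $\tfrac{1}{2}\mu(T_0(\phi))$ expressed through local Selmer ratios, and intersecting with your congruence conditions will not recover the Kriz--Li product $\prod_{\ell\mid N,\ell\neq 3}\tfrac{q_\ell}{\ell+1}$.
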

\begin{proof}
    Let $N_1$ be the product of all primes where $E$ has split multiplicative reduction. Let $N'_2$ be the product of all primes where $E$ has non-split multiplicative reduction and let $N_2=pN'_2$. Apply \cite[Theorem 9.5]{kriz-li} to the triple $(N_1,N_2,1)$ to obtain a set $S$ of fundamental discriminants. Let $d\in S$ and $K=\Q(\sqrt{d})$. One can now show as in \cite[Theorem 9.7]{kriz-li} that $E^K$ has analytic rank $0$. Again, the Theorem of Gross-Zagier and Kolyvagin implies that $\op{Sel}_{p^\infty}(E^{(K)}/\Q)$ has corank $0$. As $p\mid N_2$ it follows directly from the conditions in \cite[Theorem 9.5]{kriz-li} that $p$ splits in $\Q(\sqrt{d})$. Thus, $E$ and $E^{(d)}$ are isomorphic over $\Q_p$. Thus, $(E,E^{K},p)$ satisfies conditions (1)-(3) and (5) of Theorem \ref{main thm 3}. Condition (4) follows from \cite[Example 1.2.1]{variations-tamagawa}. The claim now follows from Theorem \ref{main thm 3}.
\end{proof}
\begin{remark}
    If $N$ has a lot of prime factors the density in the above theorem will be smaller than the densities computed in \ref{computation-density}. Even though  the conditions of Theorem \ref{thm:3-kriz-li} are easier to check, LMFDB is lacking data for the $§p$-adic regulator for a lot of semistable curves that admit a degree $3$ isogeny, which makes it actually more complicated to find examples.
\end{remark}
\begin{example}
    Consider the curve with Cremona label 209a1 and $p=5$. The LMFDB data bank show us that $(E,p)$ indeed satisfies the conditions of the above theorem.
\end{example}

\subsection{Density results for $p=3$}

\par In this section, we fix the prime $p=3$ and an elliptic curve $E_{/\Q}$ with conductor $N_E$ satisfying the following conditions 
\begin{enumerate}
    \item $E$ admits a $3$-isogeny over $\Q$,
    \item $E$ has good ordinary reduction at $3$, 
    \item $\op{corank}_{\Z_3} \op{Sel}_{3^\infty}(E/\Q)=1$, 
    \item $\widetilde{E}(\F_3)[3]=0$;
        \item $3\nmid \prod_{\ell\neq 3} c_\ell(E/\Q)$;
        \item the normalized $3$-adic regulator of $E$ over $\Q$ is a $3$-adic unit.
\end{enumerate}

We wish to apply the results of Bhargava-Klagsburn-Oliver-Schnidman \cite{BKOS} to prove that there is a positive density of negative squarefree numbers $d$ for which that following conditions hold for $E^{(d)}$
\begin{enumerate}
    \item $\op{Sel}_{3^\infty}(E^{(d)}/\Q)=0$;
    \item $E^{(d)}$ has good ordinary reduction at $3$;
    \item $3$ is nonanomalous for $E^{(d)}$, i.e., $\widetilde{E}^{(d)}(\F_3)[3]=0$;
    \item $3\nmid \prod_{\ell\neq 3} c_\ell(E^{(d)}/\Q)$.
\end{enumerate}
\par Let $\phi: E\rightarrow \widehat{E}$ be a $3$-isogeny defined over $\Q$. Note that this induces a $3$-isogeny $\phi_d: E^{(d)}\rightarrow \widehat{E}^{(d)}$.
\begin{definition}Let $\ell$ be a prime, define the local Selmer ratio as follows
\[c_\ell(\phi_d):=\dfrac{\left|\widehat{E}^{(d)}(\Q_\ell)/\phi\left(E^{(d)}(\Q_\ell)\right)\right|}{|E^{(d)}[\phi](\Q_\ell)|}.\]
The \emph{global Selmer ratio} is defined as the product of all local Selmer ratios
\[c(\phi_d) := \prod_{\ell} c_\ell(\phi_d).\]
\end{definition}
We recall some of the results of \cite{BKOS}. A subset $\Sigma \subset \Q^*/\Q^{*2}$ is said to be determined by local conditions if it can be expressed as $\Sigma = \Q^*/\Q^{*2} \cap \prod_\ell \Sigma_\ell$, where each $\Sigma_\ell$ is a subset of $\Q_\ell^*/\Q_\ell^{*2}$ representing a local condition. If $\Sigma_\ell = \Q_\ell^*/\Q_\ell^{*2}$ for all but a finite number of places $\ell$, we then say that $\Sigma$ is defined by finitely many local conditions. The height of an element $d \in \Q^*/\Q^{*2}$ is defined as $H(d) := |d|$, and for $X>0$, set $\Sigma(X):=\{ d\in \Sigma \mid |d| < X \}$.
Define, for each $m \geq 0$, the subset  \[T_m(\phi) := \{d \in \Q^*/\Q^{*2} \mid |t(\phi_d)| = m \},\]
where $t(\phi_d) := \ord_3 \, c(\phi_d)$, and let $\mu(T_m(\phi))$ denote the density of $T_m(\phi)$ within $\Q^*/\Q^{*2}$. The set $T_m(\phi)$ is defined by local conditons at the primes $\ell$ that divide $6N\infty$.

\begin{theorem}\label{globalbounds}
Let $E$ be an elliptic curve over $\Q$ admitting a $3$-isogeny $\phi \colon E \to \widehat{E}$. Let $\Sigma$ be a subset of $T_0(\phi)$ defined by finitely many local conditions. Then the proportion of twists $E^{(d)}$ for which $\op{Sel}_3(E^{(d)})=0$ is at least $\frac{1}{2}\mu(T_0(\phi))$.
\end{theorem}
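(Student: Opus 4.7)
The plan is to apply the $\phi$-descent machinery to compare $\op{Sel}_3(E^{(d)})$ with the smaller $\phi$- and $\hat\phi$-Selmer groups, where $\hat\phi\colon\widehat{E}\to E$ denotes the dual isogeny of $\phi$ (so that $\hat\phi\circ\phi=[3]$). From the short exact sequence of group schemes
\[
0 \longrightarrow E^{(d)}[\phi] \longrightarrow E^{(d)}[3] \longrightarrow \widehat{E}^{(d)}[\hat\phi] \longrightarrow 0,
\]
a standard diagram chase on Galois cohomology yields, for each squarefree $d$, an exact sequence
\[
\op{Sel}_\phi(E^{(d)}) \longrightarrow \op{Sel}_3(E^{(d)}) \longrightarrow \op{Sel}_{\hat\phi}(\widehat{E}^{(d)}).
\]
The first observation is that whenever both outer groups are trivial then $\op{Sel}_3(E^{(d)})=0$; the task therefore reduces to controlling the proportion of twists $d\in\Sigma$ for which both $\op{Sel}_\phi(E^{(d)})$ and $\op{Sel}_{\hat\phi}(\widehat{E}^{(d)})$ vanish.

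Next, I would invoke the main distribution theorem of Bhargava--Klagsbrun--Oliver--Shnidman \cite{BKOS}. For any subset of $\Q^*/\Q^{*2}$ cut out by finitely many local conditions at primes of bad reduction, places above $3$, and infinity, they compute the limiting average of $|\op{Sel}_\phi(E^{(d)})|$, and likewise of $|\op{Sel}_{\hat\phi}(\widehat{E}^{(d)})|$, over twists of height $\leq X$ as $X\to\infty$, expressing the answer as an explicit function of the global Selmer ratio $c(\phi_d)$. Restricting to $\Sigma\subset T_0(\phi)$, where by definition $t(\phi_d)=0$ pins down the value of $c(\phi_d)$, produces a bounded expected value for the combined sum $|\op{Sel}_\phi(E^{(d)})|+|\op{Sel}_{\hat\phi}(\widehat{E}^{(d)})|$ on $\Sigma$.

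The conclusion would follow from a Markov-type inequality. Since each Selmer size is a power of $3$ and at least $1$, whenever at least one of the two Selmer groups is nontrivial the pair contributes a sum $\geq 4$ to the average, whereas pairs with both trivial contribute exactly $2$. A direct computation using the BKOS average bound on $\Sigma\subset T_0(\phi)$ then forces at least half of $d\in T_0(\phi)$ to satisfy $|\op{Sel}_\phi(E^{(d)})|=|\op{Sel}_{\hat\phi}(\widehat{E}^{(d)})|=1$. Combining this with the displayed Selmer exact sequence yields the asserted lower bound of $\tfrac{1}{2}\mu(T_0(\phi))$ on the density of twists with $\op{Sel}_3(E^{(d)})=0$.

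The principal obstacle — or rather, the sole nontrivial ingredient — is the BKOS distribution theorem itself, whose proof rests on a delicate orbit-counting argument on spaces of binary cubic forms together with a careful analysis of how local Selmer conditions propagate under quadratic twists. In the present argument we rely on it as a black box; once granted, the deduction is purely formal and reduces to the two elementary ingredients above, namely the $\phi$-descent exact sequence and a pigeonhole bound on the distribution of Selmer sizes.
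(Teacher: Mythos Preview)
Your approach and the paper's are essentially the same: both appeal to \cite{BKOS}. The paper simply cites \cite[Theorem~2.5]{BKOS} and its proof as a black box, whereas you unpack one layer, reducing to the BKOS average--size theorem for $\op{Sel}_\phi$ together with the $\phi$-descent exact sequence and a Markov bound. This is exactly the internal argument of \cite{BKOS}, so there is no genuine difference in strategy.

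There is, however, a small gap in your Markov step as written. You claim that whenever at least one of $\op{Sel}_\phi(E^{(d)})$, $\op{Sel}_{\hat\phi}(\widehat{E}^{(d)})$ is nontrivial the sum of their orders is $\geq 4$, and that this combined with the BKOS average forces the ``bad'' proportion to be at most $\tfrac12$. But on $T_0(\phi)$ the BKOS average of $|\op{Sel}_\phi|$ is $1+c(\phi_d)=2$, hence the average of the sum is $4$; a lower bound of $4$ on bad pairs then only gives the trivial bound $p\leq 1$. What makes the argument work is the Cassels--Greenberg ratio formula: for $d\in T_0(\phi)$ (and away from rational torsion in the isogeny kernels) one has $|\op{Sel}_\phi(E^{(d)})|=|\op{Sel}_{\hat\phi}(\widehat{E}^{(d)})|$, so a nontrivial pair actually contributes $\geq 6$, and then $4\geq 6p+2(1-p)$ gives $p\leq \tfrac12$. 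Equivalently, run Markov on $|\op{Sel}_\phi|$ alone (average $2$, nontrivial value $\geq 3$) to get at least half with $\op{Sel}_\phi=0$, and then invoke the ratio formula to conclude $\op{Sel}_{\hat\phi}=0$ for the same $d$. With this correction your sketch is complete.
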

\begin{proof}
    The above result follows from \cite[Theorem 2.5]{BKOS} and its proof.
\end{proof}

We recall results in \emph{loc. cit.}, which provide a description for the local Selmer ratios $c_\ell(\phi)$. We start with $\ell=\infty$. 

\begin{proposition}\label{R}
With respect to notation above, we have that
\begin{equation*}c_\infty(\phi_d) = 
\begin{cases}
\frac{1}{3} & E^{(d)}[\phi](\mathbb{R}) \simeq \Z/3\Z; \\
1 & E^{(d)}[\phi](\mathbb{R}) = 0.
\end{cases}\end{equation*}
\end{proposition}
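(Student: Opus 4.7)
The plan is to read $c_\infty(\phi_d)$ off the long exact sequence of $G_{\mathbb{R}}$-cohomology attached to $\phi$, where $G_{\mathbb{R}}:=\mathrm{Gal}(\mathbb{C}/\mathbb{R})$. Applying $G_{\mathbb{R}}$-invariants to the short exact sequence of $G_{\mathbb{R}}$-modules
\[
0 \longrightarrow E^{(d)}[\phi](\mathbb{C}) \longrightarrow E^{(d)}(\mathbb{C}) \xrightarrow{\;\phi\;} \widehat{E}^{(d)}(\mathbb{C}) \longrightarrow 0
\]
yields the four-term exact sequence
\[
0 \to E^{(d)}[\phi](\mathbb{R}) \to E^{(d)}(\mathbb{R}) \xrightarrow{\phi} \widehat{E}^{(d)}(\mathbb{R}) \to H^1\!\bigl(G_{\mathbb{R}},\, E^{(d)}[\phi](\mathbb{C})\bigr).
\]
In particular, the cokernel of $\phi$ on $\mathbb{R}$-points embeds into this $H^1$, while the denominator appearing in $c_\infty(\phi_d)$ is, by definition, exactly $|E^{(d)}[\phi](\mathbb{R})|$.

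The key step is then to kill the $H^1$. Since $\phi$ is a $3$-isogeny, the module $E^{(d)}[\phi](\mathbb{C})$ has order $3$, while $|G_{\mathbb{R}}|=2$. Because these orders are coprime, the standard restriction--corestriction argument (multiplication by $|G_{\mathbb{R}}|=2$ is simultaneously zero and an isomorphism on the $H^1$ of a module of exponent $3$) forces
\[
H^1\!\bigl(G_{\mathbb{R}},\, E^{(d)}[\phi](\mathbb{C})\bigr)=0.
\]
Consequently $\phi: E^{(d)}(\mathbb{R})\to\widehat{E}^{(d)}(\mathbb{R})$ is surjective, so the numerator of $c_\infty(\phi_d)$ equals $1$, and substituting into the defining formula gives
\[
c_\infty(\phi_d)=\frac{1}{|E^{(d)}[\phi](\mathbb{R})|}.
\]

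The final step is just the case analysis: since $E^{(d)}[\phi](\mathbb{C})$ is a cyclic group of prime order $3$, its only $G_{\mathbb{R}}$-stable subgroups are $0$ and the whole group, so $|E^{(d)}[\phi](\mathbb{R})|\in\{1,3\}$, yielding the two cases $c_\infty(\phi_d)=1$ and $c_\infty(\phi_d)=\tfrac{1}{3}$ stated in the proposition. There is essentially no obstacle: the argument is a direct combination of the Galois-cohomology long exact sequence with the classical coprime-order vanishing of group cohomology, and both inputs are standard.
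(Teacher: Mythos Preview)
Your argument is correct. The long exact sequence in $G_{\mathbb{R}}$-cohomology together with the coprime-order vanishing of $H^1(G_{\mathbb{R}},E^{(d)}[\phi](\mathbb{C}))$ (order $3$ module, order $2$ group) does force surjectivity of $\phi$ on $\mathbb{R}$-points, and the rest is the trivial case split on the $\mathbb{R}$-rational $\phi$-torsion.

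As for comparison: the paper does not actually prove this proposition. It is stated there as a result recalled from \cite{BKOS}, with no argument supplied. Your write-up therefore gives more than the paper does, namely a short self-contained proof rather than a citation. The content of your argument is the standard one (and is essentially what underlies the cited result), so there is no tension between the two; you have simply unpacked what the paper leaves as a reference.
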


\begin{proposition}
    \label{cor:tamag}
For $\ell\notin \{3, \infty\}$ we have that $c_\ell(\phi_d) = c_\ell(\widehat{E}^{(d)})/c_\ell(E^{(d)})$.
\end{proposition}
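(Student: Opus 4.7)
The plan is to establish the formula by extending $\phi_d$ to a morphism of Néron models over $\mathbb{Z}_\ell$ and running the snake lemma twice, exploiting the fact that $\deg\phi_d = 3$ is coprime to $\ell$.

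First, by the Néron mapping property, $\phi_d$ extends to a morphism $\phi_d\colon \mathcal{E}^{(d)} \to \widehat{\mathcal{E}}^{(d)}$ of Néron models over $\mathbb{Z}_\ell$. Since $\deg\phi_d = 3$ is coprime to $\ell$, the kernel $\mathcal{E}^{(d)}[\phi_d]$ is finite étale of order $3$, and consequently $\phi_d$ is itself étale. Restricting to identity components and passing to the component-group quotient yields a commutative diagram of short exact sequences
\[
\begin{tikzcd}[column sep=small]
0 \arrow[r] & \mathcal{E}^{(d),0}(\mathbb{Z}_\ell) \arrow[r] \arrow[d, "\phi_d^0"] & E^{(d)}(\mathbb{Q}_\ell) \arrow[r] \arrow[d, "\phi_d"] & \Phi_{E^{(d)}}(\mathbb{F}_\ell) \arrow[r] \arrow[d, "\bar\phi_d"] & 0 \\
0 \arrow[r] & \widehat{\mathcal{E}}^{(d),0}(\mathbb{Z}_\ell) \arrow[r] & \widehat{E}^{(d)}(\mathbb{Q}_\ell) \arrow[r] & \Phi_{\widehat{E}^{(d)}}(\mathbb{F}_\ell) \arrow[r] & 0,
\end{tikzcd}
\]
whose rows are exact thanks to Lang's theorem applied to the smooth connected special fibre of $\mathcal{E}^{(d),0}$, which yields $H^1(\mathbb{Z}_\ell, \mathcal{E}^{(d),0}) = 0$; note that $|\Phi_{E^{(d)}}(\mathbb{F}_\ell)| = c_\ell(E^{(d)})$. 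The snake lemma then supplies a six-term exact sequence whose alternating product of orders is $1$, giving
\[
c_\ell(\phi_d) = \frac{|\mathrm{coker}\,\phi_d^0|}{|\ker\,\phi_d^0|}\cdot \frac{|\mathrm{coker}\,\bar\phi_d|}{|\ker\,\bar\phi_d|},
\]
and since $|\mathrm{coker}\,f|/|\ker\,f| = |B|/|A|$ for any homomorphism $f\colon A\to B$ of finite abelian groups, the component-group factor equals $c_\ell(\widehat{E}^{(d)})/c_\ell(E^{(d)})$.

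It remains to prove $|\mathrm{coker}\,\phi_d^0| = |\ker\,\phi_d^0|$. For this, I would run the snake lemma a second time on the formal-group/reduction filtration $0 \to \hat E^{(d)}(\mathfrak{m}_\ell) \to \mathcal{E}^{(d),0}(\mathbb{Z}_\ell) \to \tilde E^{(d),\mathrm{ns}}(\mathbb{F}_\ell) \to 0$ together with its counterpart for $\widehat{E}^{(d)}$. Because $\deg\phi_d = 3$ is coprime to $\ell$, the induced map on formal groups is an isomorphism, so the six-term sequence collapses to identifications $\ker\,\phi_d^0 \simeq \ker\,\tilde\phi_d(\mathbb{F}_\ell)$ and $\mathrm{coker}\,\phi_d^0 \simeq \mathrm{coker}\,\tilde\phi_d(\mathbb{F}_\ell)$. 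Lang's theorem applied to the connected smooth commutative $\mathbb{F}_\ell$-group $\tilde E^{(d),\mathrm{ns}}$ then identifies $\mathrm{coker}\,\tilde\phi_d(\mathbb{F}_\ell)$ with $H^1(\mathbb{F}_\ell, \tilde E^{(d)}[\tilde\phi_d])$ and $\ker\,\tilde\phi_d(\mathbb{F}_\ell)$ with $H^0(\mathbb{F}_\ell, \tilde E^{(d)}[\tilde\phi_d])$; these two groups have the same order because $\mathbb{F}_\ell$ has cohomological dimension $1$ and finite Galois modules over $\mathbb{F}_\ell$ have trivial Euler characteristic.

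The main obstacle is coordinating the two snake-lemma computations with the Néron-model bookkeeping. The crucial input is $\ell \neq 3 = \deg \phi_d$, which simultaneously ensures that $\phi_d$ extends to an étale morphism of Néron models and that the formal-group map induced by $\phi_d$ is an isomorphism; once both are invoked the argument becomes mechanical and yields $c_\ell(\phi_d) = c_\ell(\widehat{E}^{(d)})/c_\ell(E^{(d)})$.
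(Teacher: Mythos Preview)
Your argument is correct and is essentially the standard proof of this fact (going back to Schaefer and, in the degree-$2$ case, to Cassels): extend to N\'eron models, filter by the identity component, and use that an isogeny of degree prime to the residue characteristic is an isomorphism on formal-group points together with Lang's theorem on the special fibre. One small simplification: once the formal-group map is an isomorphism, the equality $|\ker\tilde\phi_d(\F_\ell)| = |\mathrm{coker}\,\tilde\phi_d(\F_\ell)|$ follows immediately from the fact that the source and target $\tilde E^{(d),\mathrm{ns}}(\F_\ell)$ and $\tilde{\widehat{E}}{}^{(d),\mathrm{ns}}(\F_\ell)$ are finite groups of the same order (isogenous connected smooth one-dimensional groups over $\F_\ell$), so the Euler-characteristic detour through $H^0$ and $H^1$ of the kernel scheme is not strictly necessary.

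By contrast, the paper does not give a proof at all: it simply cites \cite[Corollary~10.3]{BKOS}. So your proposal is not merely a different route but an actual self-contained argument where the paper defers to the literature; what you have written is effectively a reconstruction of the proof that appears in the cited reference.
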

\begin{proof}
This result is \cite[Corollary 10.3]{BKOS}.
\end{proof}

We now come to the main application of this section. 

\begin{theorem}\label{lastthm 2}
    There is a positive density of negative squarefree integers such that $(\Q(\sqrt{d}), 3)$ satisfies \eqref{hyp:h10gen}.
\end{theorem}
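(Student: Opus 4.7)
The plan is to fix an elliptic curve $E/\Q$ satisfying the six conditions opening Section \ref{s 5} (LMFDB searches confirm that such curves exist), and then to produce a positive-density set of negative squarefree integers $d$ for which the quadratic twist $E^{(d)}$ satisfies the four conditions listed immediately below the setup. Theorem \ref{main thm 3} then furnishes \eqref{hyp:h10gen} for each such $K_d := \Q(\sqrt d)$, yielding the theorem.

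The main engine is Theorem \ref{globalbounds}, which shows that a proportion of at least $\tfrac12\mu(T_0(\phi))$ of squarefree twists have $\op{Sel}_3(E^{(d)}/\Q)=0$; combined with $E^{(d)}(\Q)[3]=0$ (which will hold once $\widetilde{E}^{(d)}(\F_3)[3]=0$ is enforced locally at $3$), this upgrades to $\op{Sel}_{3^\infty}(E^{(d)}/\Q)=0$, securing the first twist-condition. To see $\mu(T_0(\phi))>0$, I would unwind Propositions \ref{R} and \ref{cor:tamag}: for primes $\ell\nmid 3 N_E$ the local factor $c_\ell(\phi_d)=c_\ell(\widehat{E}^{(d)})/c_\ell(E^{(d)})$ is automatically a $3$-adic unit, since Tamagawa numbers of twists of good-reduction curves lie in $\{1,2,4\}$; at each $\ell\in\{3,\infty\}\cup\{\ell\mid N_E\}$ the factor depends only on $d$ modulo a bounded power of $\ell$ (or on the sign of $d$), and one checks case-by-case that at least one local class achieves $\ord_3 c_\ell(\phi_d)=0$. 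Since $T_0(\phi)$ is cut out by finitely many satisfiable local conditions, its density is strictly positive.

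Next, I would intersect with the finite set of additional local conditions enforcing the remaining twist-conditions: at $\infty$, the sign condition $d<0$; at $\ell=3$, the condition $d\equiv 1\pmod 3$ so that $3$ splits in $K_d$ and $E^{(d)}\simeq E$ over $\Q_3$ (whence $E^{(d)}$ inherits good ordinary reduction and non-anomaly from $E$, giving twist-conditions (2) and (3)); and at each $\ell\mid N_E$, a local congruence on $d$ ensuring $3\nmid c_\ell(E^{(d)}/\Q)$. For $\ell\nmid 3 N_E$ with $\ell\mid d$, the twist $E^{(d)}$ automatically acquires additive reduction of type $I_0^*$ with $c_\ell\in\{1,2,4\}$, so no condition is needed at such primes. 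All the added constraints are local at finitely many primes, so the intersection $\Sigma$ with $T_0(\phi)$ is again of positive density; applying Theorem \ref{globalbounds} to $\Sigma$, and combining with the independent sieve for squarefreeness (density $6/\pi^2$), produces the required positive-density set of negative squarefree $d$. Theorem \ref{main thm 3} then concludes.

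The main obstacle is compatibility at the primes $\ell\mid N_E$: one must simultaneously realize the $T_0(\phi)$-condition $\ord_3 c_\ell(\phi_d)=0$ and the Tamagawa condition $3\nmid c_\ell(E^{(d)}/\Q)$. For most Kodaira reduction types this is immediate, but the potentially delicate cases are reduction of type $IV$ or $IV^*$, where $c_\ell$ can be divisible by $3$; here, case-by-case analysis via Tate's algorithm on the twist is needed to verify the existence of compatible local classes. This analysis restricts, but does not preclude, the class of admissible $E$, and since such $E$ exist in LMFDB the density conclusion stands.
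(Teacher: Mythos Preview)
Your overall strategy matches the paper's: fix an admissible curve $E$ with a rational $3$-isogeny $\phi$, carve out a positive-density subset $\Sigma\subset T_0(\phi)$ by local conditions at $3$, $\infty$, and the bad primes, apply Theorem \ref{globalbounds} to $\Sigma$, and then invoke Theorem \ref{main thm 3}. But your positive-density argument for $\Sigma$ has a genuine gap.

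You argue that $\mu(T_0(\phi))>0$ by arranging $\ord_3 c_\ell(\phi_d)=0$ at each prime separately, and then assert that intersecting with the further local conditions ($d<0$, $d\equiv 1\pmod 3$, Tamagawa constraints at $\ell\mid N_E$) preserves positive density because ``all the added constraints are local at finitely many primes''. This does not follow: $T_0(\phi)$ is the locus where the \emph{sum} of the local $\ord_3 c_\ell(\phi_d)$ vanishes, not a product of independent conditions, and your extra constraints live at the very same primes. Concretely, by Proposition \ref{R}, once the sign of $d$ is fixed the archimedean factor $c_\infty(\phi_d)$ is determined by the curve (it depends only on whether complex conjugation acts trivially on $E[\phi]$); for some admissible $E$, imposing $d<0$ forces $c_\infty(\phi_d)=1/3$, and then the tuple with all local $\ord_3=0$ is simply unavailable. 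You would need to exhibit a compensating prime contributing $+1$, compatible with your other local conditions, and you have not done this. The same remark applies at $\ell=3$ once $d\equiv 1\pmod 3$ is imposed.

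The paper closes this gap, and your IV/IV$^*$ obstacle, simultaneously by committing to the explicit curve $E$ with label 1216.o3 and producing a single witness $d=-2$: the twist $E^{(-2)}$ (label 304.f3) has $\op{Sel}_3=0$ and every Tamagawa number in its isogeny class is $1$, so $c_\ell(\phi_{-2})=1$ for $\ell\notin\{3,\infty\}$; hence $c(\phi_{-2})\in\{1/3,1,3\}$, and the parity constraint $t(\phi_{-2})\equiv\dim_{\F_3}\op{Sel}_3(E^{(-2)}/\Q)\equiv 0\pmod 2$ forces $c(\phi_{-2})=1$. Thus $-2\in T_0(\phi)$, and since $-2<0$ and $-2\equiv 1\pmod 3$, the set $T_0'(\phi)=T_0(\phi)\cap\{d<0,\ d\equiv 1\pmod 3\}$ is nonempty and therefore has positive density. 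No extra congruences at bad primes are imposed; for this particular $E$ the Kodaira types of all twists lie in $\{I_0,I_0^*,I_1,I_1^*\}$, so $3\nmid c_\ell(E^{(d)}/\Q)$ automatically. Your abstract sieve could be made rigorous, but only after fixing a concrete $E$ and verifying this local compatibility; the witness-plus-parity route is how the paper actually finishes.
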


\begin{proof}
    The strategy is to show that there is an explicit exlliptic curve with $3$-isogeny $\phi$ for which the set $T_0(\phi)$ has positive density. The values of $d$ shall range in a certain subset of $T_0(\phi)$ subject to conditions at $3$ and $\infty$. 
    
    \par Let $E_{/\Q}$ be the elliptic curve $y^2=x^3+216x-54$ (LMFDB label \href{https://www.lmfdb.org/EllipticCurve/Q/1216/o/3}{1216.o3}) and set $p:=3$. There is a cyclic $3$-isogeny $\phi: E\rightarrow \widehat{E}$ defined over $\Q$. The Iwasawa invariants are given by $\mu_3(E/\Q_{\op{cyc}})=0$ and $\lambda_3(E/\Q_{\op{cyc}})=1$. This can indeed be checked by applying the Corollary \ref{cor ECF mulambda} (or from the data in LMFDB). We find that the twist $E^{(-2)}$ (LMFDB label \href{https://www.lmfdb.org/EllipticCurve/Q/304/f/3}{304.f3}) has $\mu_3(E^{(-2)}/\Q_{\op{cyc}})=0$ and $\lambda_3(E^{(-2)}/\Q_{\op{cyc}})=0$. The Tamagawa product for all curves in the isogeny class \href{https://www.lmfdb.org/EllipticCurve/Q/304/f/}{304.f} is $1$. It follows from Proposition \ref{cor:tamag} that $c_\ell(\phi_{-2})=1$ for $\ell\notin \{3, \infty\}$. We find that $c(\phi_{-2})=c_3(\phi_{-2})c_\infty(\phi_{-2})$. According to Proposition \ref{R}, $c_\infty(\phi_{-2})\in \{1, 1/3\}$. On the other hand, it follows from \cite[Theorem 10.5]{BKOS} that $c_3(\phi_{-2})\in \{1, 3\}$. Therefore, we find that $c(\phi_{-2})\in \{1/3, 1, 3\}$. On the other hand, it follows from \cite[Prop. 42(ii)]{BES}
    \[t(\phi_{-2})\equiv \op{dim}_{\F_3} \op{Sel}_3(E^{(-2)}/\Q)=0\pmod{2}.\] Thus we deduce that $c(\phi_{-2})=1$, i.e., $-2\in T_0(\phi)$.
    
    \par We now introduce local conditions at $3$ and $\infty$. Consider the subset $T_0'(\phi)$ of $T_0(\phi)$ consisiting of those $d$ for which $d\equiv 1\pmod{3}$ and $d<0$. The set $T_0'(\phi)$ is nonempty since $-2\in T_0'(\phi)$. The set $T_0'(\phi)$ is defined by finitely many congruence conditions (cf. \cite{BKOS}) and is nonempty. This implies that $T_0'(\phi)$ has positive density. Theorem \ref{globalbounds} then asserts that half of the values $d\in T_0'(\phi)$ have the property that $\op{Sel}_3(E^{(d)}/\Q)=0$. Note that for $d\in T_0'(\phi)$, we have that $3$ splits in $\Q(\sqrt{d})$. This is because $d\equiv 1\mod{3}$ by construction. Since $\widetilde{E}(\F_3)[3]=0$, it follows that for all $d\in T_0'(\phi)$, we have that $\widetilde{E}^{(d)}(\F_3)[3]=0$. For any prime $\ell\neq 3$ the Kodaira type of $E^{(d)}$ is $I_0, I_0^*, I_1$ or $I_1^*$ (cf. \cite[Table 3]{BKOS}). In particular, this implies that $3\nmid \prod_{\ell\neq 3} c_\ell(E^{(d)}/\Q)=1$. Since $3$ splits in $\Q(\sqrt{d})$, we find that $E$ and $E^{(d)}$ both have the same reduction type at $3$. Recall that $E$ has good ordinary reduction at $3$. We thus deduce that $E^{(d)}$ has good ordinary reduction at $3$ for all $d\in T_0'(\phi)$. Theorem \ref{main thm 3} implies that for all such $d$, \[\mu_p(E/\Q(\sqrt{d})_{\op{cyc}})=0\text{ and }\lambda_p(E/\Q(\sqrt{d})_{\op{cyc}})=1.\] It follows from Theorems \ref{main thm 1} and \ref{main thm 2} that $(\Q(\sqrt{d}), 3)$ satisfies \eqref{hyp:h10gen} for all $d\in T_0'(\phi)$. 
\end{proof}


\bibliographystyle{alpha}
\bibliography{references}
\end{document}